\newcommand{\CDC}[1]{\textcolor{red}{{#1}}}
\def\diag{\mathop{\mathrm{diag}}}  % For diagonal matrices
\newtheorem{theorem}{Theorem}
\newtheorem{lemma}{Lemma}
\newtheorem{definition}{Definition}
\newtheorem{corollary}{Corollary}
\newtheorem{remark}{Remark}
\newtheorem{assumptions}{Assumptions}
\newtheorem{problem}{Problem}
\newcommand{\eps}{\epsilon}
\newcommand{\veps}{\varepsilon}
\newcommand{\la}{\langle}
\newcommand{\ra}{\rangle}
\newcommand{\re}{\Re}
\newcommand{\sr}{\stackrel}
\newcommand{\rar}{\rightarrow}
\newcommand{\tri}{\sr{\triangle}{=}}
\newcommand{\be}{\begin{equation}}
\newcommand{\ee}{\end{equation}}
\newcommand{\bea}{\begin{eqnarray}}
\newcommand{\eea}{\end{eqnarray}}
\newcommand{\bes}{\begin{eqnarray*}}
\newcommand{\ees}{\end{eqnarray*}}
\newcommand{\bi}{\begin{itemize}}
\newcommand{\ei}{\end{itemize}}
\newcommand{\ben}{\begin{enumerate}}
\newcommand{\een}{\end{enumerate}}
\newcommand{\bp}{\begin{problem}}
\newcommand{\ep}{\end{problem}}
\newcommand{\hso}{\hspace{.1in}}
\newcommand{\hst}{\hspace{.2in}}
\newcommand{\noi}{\noindent}
\newcommand{\bc}{\begin{center}}
\newcommand{\ec}{\end{center}}
\begin{document}

%\bibliographystyle{ieeetr}

%
% paper title
% can use linebreaks \\ within to get better formatting as desired

\title{Dynamic Team Theory of Stochastic Differential Decision Systems with Decentralized  Noisy Information Structures via Girsanov's Measure Transformation}

\begin{comment}
\title{Team Theory and Information Structures of Stochastic Differential Decentralized  Decision Systems via Girsanov's Measure Transformation}

\title{\bf Team Decision Theory of Stochastic Differential Systems via Girsanov's Approach-Part I: Optimality Conditions for Decentralized Noisy Information Structures}
\end{comment}

% author names and affiliations

\author{ Charalambos D. Charalambous\thanks{C.D. Charalambous is with the Department of Electrical and Computer Engineering, University of Cyprus, Nicosia 1678 (E-mail: chadcha@ucy.ac.cy).}  \: and Nasir U. Ahmed\thanks{N.U Ahmed  is with the School of  Engineering and Computer Science, and Department of Mathematics,
University of Ottawa, Ontario, Canada, K1N 6N5 (E-mail:  ahmed@site.uottawa.ca).}
}

% make the title area
\maketitle

\begin{abstract}
In this paper, we present two methods which generalize static team theory  to dynamic team theory, in the context of   continuous-time stochastic nonlinear differential decentralized decision systems, with  relaxed  strategies,  which are measurable  to  different noisy information structures. For both methods we apply   Girsanov's measure transformation to obtain  an equivalent  
%stochastic differential decentralized
 decision system under a reference probability measure, so that the observations and  information structures available for decisions, are not affected by any of the team decisions. 

 The first method is based on function space  integration with respect to products of Wiener measures.  It generalizes Witsenhausen's \cite{witsenhausen1988} definition of equivalence between  discrete-time static and dynamic team problems, and relates Girsanov's theorem to the so-called ``Common Denominator Condition and Change of Variables". %For  discrete-time problems, we demonstrate that, in principle,  the team optimality conditions can be obtained similar to  static team optimality  conditions for more general models, although the actual  computation of decentralized strategies  may not be tractable. 
 
 The second method is based on stochastic Pontryagin's maximum principle. The team optimality conditions are given by a ``Hamiltonian System" consisting of forward and backward stochastic differential equations, and conditional variational Hamiltonians with respect to the information structure of each team member.
 %, expressed under the initial and a reference probability space via Girsanov's measure transformation. 
 Under  global convexity conditions, we show that PbP optimality implies team optimality.  We also obtain team and PbP optimality conditions for regular team strategies, which are measurable functions of decentralized information structures.

In addition, we also show existence of team and PbP optimal relaxed decentralized strategies (conditional distributions), in  the weak$^*$ sense, without imposing    convexity on  the action spaces of the team members, and their realization by regular team strategies.  %Moreover, using the embedding of regular strategies into relaxed strategies, 

\end{abstract}

\vspace*{1.0cm}

  \vskip6pt\noindent {\bf Key Words.}  Dynamic Team Theory,  Stochastic, Decentralized, Existence, Path Integration, Maximum Principle, Girsanov.  
  
  \vspace*{1.0cm}

   \vskip6pt\noindent{\bf  2000 AMS Subject Classification} 49J55, 49K45, 93E20.

%\newpage

  \section{Introduction}
\label{introduction}
Static Team Theory is a mathematical formalism of decision problems with multiple Decision Makers (DMs) having access to different information, who aim at optimizing a common pay-off or reward functional. It is often used to formulate 
decentralized decision problems,  in which  the decision making authority is distributed through a collection of agents or players, and the information available to the DMs to implement their actions is different. Static team theory and decentralized decision making originated from the fields of management, organization behavior and government by  Marschak and Radner  \cite{marschak1955,radner1962,marschak-radner1972}. However, its generalization to dynamic team theory has far reaching implications in all human activity including science and engineering systems,  comprising of multiple components, in which information available to the decision making  components is either partially  communicated to each other or not communicated at all, and decisions are taken sequentially in time. Dynamic team theory and decentralized decision making can be used  in large scale distributed dynamical systems, such as, transportation systems, smart grid energy systems, social network systems, surveillance systems, networked control systems,  communication networks, financial markets, etc.

 In general, decentralized decision making is a common feature of any system  consisting of multiple local observation posts and control stations, where the acquisition of information and its processing is shared among the different observation posts, and the DM actions at the control stations are evaluated using different information, that is, the arguments in their control laws or policies are different.  We call, as usual, ``Information Structures or Patterns'' the information available to the DMs at the control stations to implement their actions,  and we call  such informations ``Decentralized Information Structures'' if the information available to the DMs at the various control stations are not identical to all DMs.  Early work discussing the importance of information structures in decision making and its applications is found in \cite{marschak1955,radner1962,marschak-radner1972,witsenhausen1968,witsenhausen1971,witsenhausen1971a}. 

 Since the late 1960's several articles have been written on  decentralized decision making and information structures, and their applications in communication and queuing networks, sensor networks, and networked control  systems.  Some of the early references are  \cite{witsenhausen1968,witsenhausen1971,ho-chu1972,ho-chu1973,kurtaran-sivan1973,sandell-athans1974,yoshikawa1975,kurtaran1975,varaiya-walrand1977,varaiya-walrand1978,witsenhausen1979,ho-kastner-wong1978,bagghi-basar1980,ho1980,krainak-speyer-marcus1982a,krainak-speyer-marcus1982b,walrand-varaiya1983,walrand-varaiya1983a,basar1985,bansal-basar1987,aicardi-davoli-minciardi1987,witsenhausen1988,veeravalli-basar-poor1993,waal-vanschuppen2000}, while more recent are  
 \cite{bamieh-voulgaris2005,teneketzis2006,mahajan-teneketzis2009,mahajan-teneketzis2009a,nayyar-mahajan-teneketzis2011,nayyar-teneketzis2011,nayyar-teneketzis2011a,vanschuppen2011,lessard-lall2011,vanschuppen2012,farokhi-johansson2012,mishra-langbort-dullerud2012,gattami-bernhardsson-rantzer2012}. Among these references the most popular mathematical formalism is that of  ``Static Team Theory"\footnote{Static in the terminology in \cite{marschak1955,radner1962,marschak-radner1972} means all elements of the team problem are Random Variables; some authors call such problems dynamic if the information structures depend on the decisions \cite{ho-chu1972,ho-chu1973,witsenhausen1988}.} developed by Marschak and Radner \cite{marschak1955,radner1962,marschak-radner1972}.  The most successful example is the discrete-time Linear-Quadratic-Gaussian (LQG)  decision problem with  two DMs having access to one step-delay sharing information pattern \cite{kurtaran-sivan1973,yoshikawa1975}, with  common and private information parts, where the explicit solution is obtained via completion of squares and dynamic programming, respectively. 
 %For the Linear-Quadratic Gaussian problem with two DMs observing noisy versions of a Random Variable (initial state) the two person  team  optimal solution is obtained in \cite{bagghi-basar1980}.
 
  Due to the inherent difficulty in applying   Marschak's and Radner's \cite{marschak1955,radner1962,marschak-radner1972} Static Team Theory  to stochastic discrete-time dynamic decentralized decision problems, two methods are proposed over the years. The first method is based on  identifying conditions so that  discrete-time stochastic dynamic team problems can be equivalently reduced to  static team problems. The second method is based on applying dynamic programming, and   identifying conditions so that Person-by-Person (PbP) optimality\footnote{PbP optimality treats the decentralized decision problem by fixing the strategies of all DMs except one.} implies team optimality. The first method put forward  in \cite{ho-chu1972,ho-chu1973}, is based on using  precedence diagrams   to represent sequential decisions and information structures in discrete-time stochastic dynamic team problems, to aid the analysis and  computation of the  optimal  team strategies with partially nested information structures.  In our opinion,  even when the conditions suggested in these papers hold, it is not clear whether this approach is tractable, or whether it will provide any insight into specific discrete-time stochastic dynamic team problems. Along the same direction, 
and  contrary to the earlier believe at the time,   Witsenhausen in \cite{witsenhausen1988} claimed that for a broad class of problems, discrete-time stochastic dynamic decentralized decision problems, with finite decisions (including some continuous alphabet models), are no harder than Marschak's and Radner's \cite{marschak1955,radner1962,marschak-radner1972} static team problems, by showing that such problems are equivalent to static problems. In Witsenhausen's \cite{witsenhausen1988} terminology  a  discrete-time stochastic dynamic decentralized  decision problem is called ``static''\footnote{We believe the proper and more precise  terminology is ``Memoryless Observations", rather than ``Static", because it refers to the property of the observations only,  while the unobserved state  can be a random process (in \cite{witsenhausen1988} the unobserved state is a RV).} if it can be transformed to an equivalent problem such that the observations available for any one decision do not depend on the other decisions.  The procedure is described in terms of  
  ``Common Denominator Condition"  together with  ``Change of Variables". 
  However, by careful reading of [Section~2.1, \cite{witsenhausen1988}],  Witsenhausen's  analysis is  restricted to  discrete-time stochastic decentralized decision problems without dynamics %(i.e., the unobserved state is a Random Variable (RV)), and the observations available for decisions are functions of the unobserved RV, the exogenous noise process, and the team  decisions.  
  and hence, the conclusions obtained in  \cite{witsenhausen1988}  %on the equivalence between discrete-time dynamic decentralized decision problems and static team problems 
  are only for a small class of models. Moreover, no expression is given for the common denominator condition and change of variables, which fascillitate the equivalence between the two problems. \\
With respect to the second method,  PbP optimality and dynamic programming  are often  used in real-time communication \cite{witsenhausen1979,walrand-varaiya1983,teneketzis2006}, in decentralized hypothesis testing \cite{veeravalli-basar-poor1993,nayyar-teneketzis2011}, and networked control systems \cite{walrand-varaiya1983,mahajan-teneketzis2009a}, for specific classes of discrete-time models and information structures. The procedure is based on  identifying an  information state or sufficient statistic, often employed in centralized stochastic control, to replace the observations  available for decisions  by  \'a posteriory conditional distributions based on the observations  \cite{charalambous-hibey1996,charalambous-elliott1997}. However, identifying the information state and then applying dynamic programming are not easy tasks, when one is faced with  general information strctures and continuous alphabet spaces (see for exampe \cite{walrand-varaiya1983}), while the question on whether  PbP optimality implies team optimality is difficult to resolve.

   Following another research direction, recently, the authors invoked stochastic Pontryagin's maximum principle to derive team and Person-by-Person (PbP) optimality conditions for stochastic differential decision  systems with decentralized noiseless information structures \cite{charalambous-ahmedFIS_Parti2012}, and decentralized noisy information structures \cite{charalambous-ahmedPIS_2012},  and computed the optimal team decentralized strategies for various communication and control applications \cite{charalambous-ahmedFIS_Partii2012}. However,  the mathematical anaysis in \cite{charalambous-ahmedFIS_Parti2012,charalambous-ahmedPIS_2012} is based on strong formulation of the probability space, which is  restrictive in the sense that it is not easy to apply these optimality conditions to noiseless feedback information structures and noisy information structures, unless certain strong assumptions are imposed on the elements of the stochastic differential decentralized decision systems. \\
 
The  main objectives in this  paper are the following.

{\bf(1)} We present  two methods, based on Girsanov's theorem,  which generalize Marschak's and Radner's \cite{marschak1955,radner1962,marschak-radner1972} static team theory  to dynamic team theory. The first method is based on function space integration of Wiener functionals, which  we also relate  to Witsenhausen's \cite{witsenhausen1988} common denominator condition and change of variables for continuous and discrete-time dynamic team problems. The second method is based on stochastic Pontryagin's maximum principle, which allows us to derive  both necessary and sufficient team optimality conditions. 

{\bf (2)} We show existence of relaxed team strategies (conditional distributions) under general conditions, using a weak$^*$ topological space;

{\bf (3)} We show, under global convexity conditions, on the Hamiltonian functional and terminal pay-off, that PbP optimality implies team optimality;

{\bf (4)} We show realizability of relaxed stratgies by regular strategies using the Krein-Millman theorem.

Our approach is based on invoking  Girsanov's change of probability measure to define an equivalent stochastic dynamical decentralized decision system under a reference probability measure, in which  the distributed observations and information structures available for decisions are not affected by any of the team decisions.  Both methods donot impose any restrictions on the information structures as in \cite{charalambous-ahmedFIS_Parti2012,charalambous-ahmedFIS_Partii2012,charalambous-ahmedPIS_2012}, and they apply to  general models and  information structures, including   nonclassical information structures  \cite{witsenhausen1968,witsenhausen1971}.

The first method  is based on path integral of functionals of Brownian motion with respect to products of Wiener measures.  We show  that this method generalizes Witsenhausen's \cite{witsenhausen1988} notion on equivalence between  discrete-time stochastic dynamic team  problems which can be transformed to  equivalent static team problems,  to continuous-time It\^o stochastic nonlinear differential decentralized decision problems, to analogous discrete-time models, and in addition we identify the precise expression of the common denominator condition described in \cite{witsenhausen1988}.  However, we point out certain limitations of this method in the context of computing the optimal team strategies, for the case of large number of decision stages.

The second methods is based on stochastic Pontryagin's  maximum principe; we derive  necessary conditions for team optimality given by a stochastic Pontryagin's maximum principle and sufficient conditions under global convexity assumptions.  Moreover,  we also show that under the global convexity conditions, PbP optimality implies team optimality. This method is much more general, and does not suffer from any limitations, in computing the optimal team strategies, compared to  the first method.

\begin{comment}

 However,  we show, at least for continuous-time  models, that computing the optimal team strategies of the transformed  team problems, is not as tractable as one may expect, compared  to Witsenhausen's simplified discrete-time counterpart  \cite{witsenhausen1988}. On the other hand, for a general class of discrete-time stochastic dynamic decentralized decision problems, we show their equivalence to static team problems, when the unobserved state is a random process, instead of a RV as in  \cite{witsenhausen1988},  by identifying the preicise connection with static team problems. Nevertheless, in our opinion, even in the discrete-time case,  computing the optimal team strategies is not as tractable as suggested by Witsenhausen in  \cite{witsenhausen1988}.  This is the reason why we pursue the second method based on stochastic Pontryagin's maximum principle.

\end{comment}

The results listed under {\bf (1)-(4)} above, are derived in the context of  the following general continuous-time  stochastic nonlinear  differential decentralized decision systems  (with relaxed and regular team member strategies, and general   noisy information structures). 

\begin{align}
&\inf \Big\{ J(u^1, \ldots, u^N): (u^1,\ldots, u^N) \in \times_{i=1}^N {\mathbb U}_{reg}^i[0,T]\Big\}, \label{pp1}\\
&J(u^1, \ldots, u^N) = {\mathbb E}^{{\mathbb P}_\Omega}  \Big\{ \int_{0}^T \ell(t,x(t),u^1(t,y^1), \ldots, u^N(t,y^N))dt  +  \varphi(x(T))     \Big\} ,  \label{pp2}
\end{align}
subject to stochastic differential dynamics with state $x(\cdot)$ and distributed  noisy observations $\{y^i(\cdot): i=1, \ldots, N\}$ satisfying the It\^o differential equations
\begin{align}
&dx(t) = f(t,x(t),u^1(t,y^1), \ldots, u^N(t,y^N))dt \nonumber \\
& \hst \hst \hst + \sigma(t,x(t), u^1(t,y^1),\ldots, u^N(t,y^N))dW(t), \hso x(0)=x_0, \hso t \in (0,T],  \label{pp3} \\
&dy^i(t)=h^i(t,x(t),u^1(t,y^1), \ldots, u^N(t,y^N))dt + D^{i,\frac{1}{2}}(t) dB^i(t), \hso t \in [0,T], \hso i=1, \ldots, N . \label{pp4}
\end{align}
Here ${\mathbb E}^{P_\Omega}$ denotes expectation with respect to a probability measure ${\mathbb P}_\Omega$ defined on an underlying measurable space $\Big(\Omega, {\mathbb F}\Big)$,  while the elements of the team problem are the following:
\begin{align}
& J: \hso \mbox{the team pay-off or reward;}    \nonumber \\
& x: [0,T] \times \Omega \longrightarrow {\mathbb R}^n :\hso \mbox{the unobserved state process;}  \nonumber   \\
& W: [0,T] \times \Omega \longrightarrow {\mathbb R}^m :\hso \mbox{the state process exogenous Brownian Motion (BM) process};  \nonumber  \\
& B^i: [0,T] \times \Omega \longrightarrow {\mathbb R}^{k_i} :\hso \mbox{the $i$th distributed observation  exogenous BM process, $i=1, \ldots, N$};   \nonumber \\
& y^i: [0,T] \times \Omega \longrightarrow {\mathbb R}^{k_i} : \hso \mbox{the $i$th distributed observation process generating the $i$th} \nonumber \\
&\mbox{information structure-the $\sigma-$algebra,  $  {\cal G}_{0,t}^{y^i} \tri \sigma\Big\{y^i(s): 0 \leq s \leq t  \Big\}, t \in [0,T]$,  $i=1, \ldots, N$}; \nonumber \\ 
& u^i: [0,T] \times \Omega \longrightarrow {\mathbb A}^i \subseteq {\mathbb R}^{d_i}: \hso \mbox{the $i$th  decision process, $i=1, \ldots, N$;}   \nonumber \\
& {\mathbb U}_{reg}^i[0,T]:  \hso \mbox{admissible regular strategies of the $i$th decision process  $u^i, i=1, \ldots, N$}. \nonumber 
\end{align}
 Although, in the above stochastic differential decentralized decision system  we have assumed regular team strategies, in the paper we  consider relaxed team  strategies, which are regular conditional distributions $u_t^i(\Gamma) = q_t^i(\Gamma | {\cal G}_{0,t}^{y^i})$ for $ t \in [0,T]$ and $\forall \Gamma \in {\cal B}({\mathbb A}^i), i=1, \ldots, N$, and we obtain corresponding results for regular strategies as a special case of relaxed strategies. \\
 Moreover,  we apply the first method to a discrete-time generalization of (\ref{pp1})-(\ref{pp4}), and we demonstrate that both methods apply to arbitrary information structures, including nonclassical information structures \cite{wistnenhausen1971}.   We illustrate these points in our discussions.

According to the formulation  of (\ref{pp1})-(\ref{pp4}) and definition of admissible team strategies, each distributed observation $\{y^i(t): t \in [0,  T]\}$ generates the  information structure of the $i$th decision process $\{u_t^i: t \in [0, T]\}$ for $i=1,\ldots, N$. With respect to our introductory discussion, the stochastic system (\ref{pp3}) may be  
a compact representation of  many interconnected subsystems, aggregated into a single state representation $x \in {\mathbb R}^n$, each $\{y^i(t): t \in  [0, T]\}$ corresponds to the observation process at the observation post ``$i$'', and each $\{u_t^i: t \in [0, T]\}$ corresponds to the decision process applied at the ``$i$" th control station. Since in the current set up we have assumed $u^i \in {\mathbb U}_{reg}^i[0,T]$ then by definition, for each $t \in [0,T]$, the strategies are of the form  $u_t^i \equiv \mu^i(t, \{y^i(s): 0\leq s \leq t\})$, for $i=1, \ldots, N$, and hence the decision processes utilize decentralized noisy  information structures\footnote{We will also describe more general decentralized noisy  information structures.}.    

We call as usual,  (\ref{pp1})-(\ref{pp4})  a stochastic dynamic team problem, and a strategy $u^o \tri (u^{1,o},  \ldots, u^{N,o}) \in \times _{i=1}^N {\mathbb U}_{reg}^i[0,T]$ which achieves the infimum in (\ref{pp1}) a team optimal regular strategy. 
A PbP optimal regular strategy $u^o  \in \times _{i=1}^N {\mathbb U}_{reg}^i[0,T]$ is defined by 
\begin{align}
 J(u^{1,o}, \ldots, u^{N,o}) \leq  J(u^{1,o}, \ldots, u^{i-1,o},u^i, u^{i+1,o}, \ldots, u^{N,o}), \: \forall u^i \in {\mathbb U}_{reg}^i[0,T], \forall i =1,\ldots, N. \nonumber
 \end{align} 
Clearly,  team optimality implies  PbP optimality, but the reverse is not generally true.  In the context of team problems, PbP optimality is often of interest provided one can identify conditions so that PbP optimality  implies team optimality. For static team problems such conditions, are derived in \cite{radner1962} and  for exponential pay-off functionals in \cite{krainak-speyer-marcus1982a}.
%%% Short
\begin{comment}
 Specifically, if every player's strategy is his best response to the other players strategies, then the set of strategies is called an equilibrium point of the game,  closely related to the notion  of Nash-equilibrium, in the sense that no player will profit by unilaterally changing his strategy, when all other players' strategies remain the same.  \\
\end{comment}

As we have mentioned earlier, our methodology is addressing stochastic dynamic team problems is based on   Girsanov's change of probability measure, which allows us to introduce an equivalent problem under a new reference probability space in which the distributed noisy observations are signal free and/or the unobserved state process is drift free,  and hence they are not affected by any of the team decisions. Thus, we  employ   the powerful tools of stochastic calculus such as,  martingale representation theorem, stochastic variational methods, function space integration of functionals of Brownian motion with respect to Wiener measures,  to handle very general decentralized decision problems, with decision processes having access to any combination of information structures.

 \noi Indeed, we apply the first method, based on function space integration, to  the continuous-time stochastic dynamic team problem (\ref{pp1})-(\ref{pp4}), and we show  that the so called ``Common Denominator Condition" introduced in \cite{witsenhausen1988} to transformed, the simplified discrete-time stochastic dynamic decentralized  decision problem (with unobserved state a RV)  to an equivalent  static one (in Witsenhausen's terminology) is the existence, via Girsanov's theorem \cite{liptser-shiryayev1977},  of  a Radon-Nikodym derivative between the initial and the  reference probability measure, so that under the reference probability measure  the observations are not affected by any of the team decisions. Moreover, we show that   the so called ``Change of Variables" in \cite{witsenhausen1988} is an application of change of probability measure, expressing the initial pay-off under the reference probability measure.  Therefore, we extend Witsenhausen's \cite{witsenhausen1988} notion of equivalence not only to general discrete-time stochastic dynamic team problems, but also to continous-time stochastic dynamic team problems. We also show  that under the reference probability measure, the pay-off of the team problem (\ref{pp1})-(\ref{pp4})  is  equivalently   expressed via function space integration with respect to the product of Wiener measures, and that, in principle,  this integration  can be  carried out precisely as in \cite{benes1981,charalambous-elliott1998}, where examples of optimal, in mean-square sense, finite-dimensional filters are derived. However, contrary to an intuitive belief, we point out that  this does not mean that such an equivalent problem is simpler, or that static optimization theory and/or the static team theory of Marschak and Radner \cite{marschak1955,radner1962,marschak-radner1972}  can be easily applied to the equivalent problem, even for the discrete-time analog of (\ref{pp1})-(\ref{pp4}), including  Witsenhausen's simplified model (without dynamics) described in continuous-time. The reason is that the computation of the optimal team strategies can be quite intensive, and often  not tractable.

Then, we proceed with the second method to derive  new team and PbP optimality conditions; necessary conditions given by a stochastic Pontryagin's maximum principle and sufficient conditions under global convexity assumptions. Firstly, we apply Girsanov's measure transformation  \cite{liptser-shiryayev1977}   to transform (\ref{pp1})-(\ref{pp4}) to an equivalent stochastic differential team game, under a reference probability measure, on which  $\{y^i(t): t \in [0,T]\}, i=1, \ldots, N$ are independent Brownian motions, and   independent of  any of the team decisions. Secondly, under the reference probability measure we show existence of team and PbP optimal relaxed strategies, in the weak$^*$ sense. Thirdly,  we  invoke  stochastic variational methods and the Riesz representation theorem for Hilbert space semi martingales to derive team and PbP optimality conditions.  We  show that  the necessary conditions for an admissible  strategy to be team optimal is the existence of an adjoint process  in an appropriate function space satisfying a backward stochastic differential equation, and that  for each $t \in [0,T]$, the optimal actions  $u_t^{i,o}$ satisfy almost surely,  a pointwise conditional variational Hamiltonian inequality with respect to the information structure $\{y^i(s): 0 \leq s \leq t\}$, with all other actions are kept to their optimal values, for $i=1, \ldots, N$. 

Under certain global convexity conditions we also show that the $N$ conditional variational Hamiltonian  inequalities are also sufficient for team optimality. Moreover, we also show that under the global convexity conditions, PbP optimality implies team optimality.   The new optimality conditions are given both under the reference probability measure, and also under the initial probability measure via a reverse Girsanov's measure transformation. The Hamiltonian system of equations is precisely the analog of stochastic differential centralized decision problems optimality conditions, extended to decentralized decision problems using a dynamic team theory formulation.

One of the important aspect of our methodology is that the assumptions imposed to derive existence of team and PbP optimal strategies, and necessary and sufficient team and PbP optimality conditions are precisely the ones often  imposed to derive analogous results for stochastic partially observable control problems which presuppose  centralized information structures.  However,  ``the challenge remains that of  computing conditional expectations" via the conditional variational Hamiltonians, which is not an easy task even for centralized partially observable stochastic systems \cite{bensoussan1992a}. Therefore, examples will be presented elsewhere as in \cite{charalambous-ahmedPIS_2012},  due to space limitations.
 
 Throughout the paper, we develop the optimality conditions utilizing relaxed decentralized strategies, and then we show how to recover analogous optimality conditions for regular strategies.

Finallt, we point out that    one may invoke alternative methods, such as the ones described in  \cite{elliott1977,haussmann1986,elliott1982,elliott-kohlmann1989,elliott-yang1991,elliott-kohlmann1994,bensoussan1983,bensoussan1992a,charalambous-hibey1996,ahmed1998,ahmed-charalambous2007}, which are based on stochastic flows of diffeomprhisms, martingale representation theorem,  and needle variations. Moreover, for the case of regular strategies with actions spaces which are not necessarily convex, one can derive  optimality conditions by considering  the generalized Hamiltonian system of equations, which includes also  the  second-order adjoint process  \cite{peng1990}  (see also \cite{yong-zhou1999}), provided the derivations are carried out under the reference probability measure.  The important point to be made regarding the results of this paper is  that, by invoking Girsanov's measure transformation, the existence of team and PbP optimal strategies, and the team and PbP optimality conditions for stochastic differential decentralized decisions systems  formulated using stochastic dynamic team theory, are derived similarly to stochastic optimal control or decision problems, with centralized information structures, and that only at the last step of the derivations, the issue of decentralization is accounted for, leading to the conditional variational Hamiltonians with respect to the different information structures.

We believe our lengthy introduction, together with the subsequent mathematical analysis, and  results derived in the paper  will help clarify certain statements found in the literature concerning the application of Marschak's and Radner's Static Team Theory   to stochastic dynamic team problems, and aid in addressing other types of optimality criteria such as,  Nash Equilibrium, minimax games, etc. with decentralized information structures.

The  paper is organized as follows. In Section~\ref{formulation}, we introduce   the stochastic differential decentralized decision problem and its equivalent re-formulations using the weak Girsanov's measure transformation approach. In this section, we also discuss the precise connection via path integration between static team theory and dynamic team theory, thus generalizing \cite{witsenhausen1988} to continuous-time stochastic dynamic team problems, and discrete-time stochastic dynamic team problems with general unobserved state processes, and we also identify the exact expression of the common denominator conditions (Radon-Nikodym derivative).    In Section~\ref{dif-sem}, we first show existence of solutions of the stochastic differential system, their continuous dependence on $u$, and existence of team and PbP optimality using a weak$^*$ topological space of regular conditional distributions.   In Section~\ref{optimality}, we derive the variational equations which we invoke to derive the optimality conditions, both under the reference probability measure and under the initial probability measure. In Section~\ref{regular}, we show how to obtain corresponding results for regular strategies, and also how to realize relaxed strategies by regular strategies. Finally, in Section~\ref{cf} we conclude the presentation with comments on possible generalizations of our results.  

 \section{Dynamic Team Problem of Stochastic Differential Decision  Systems }
 \label{formulation}

In this section, we introduce the team theoretic formulation of stochastic differential decentralized decision systems, the information structures available to the DMs, which are different and noisy, and the relaxed and regular strategies of the DMs. Then, we introduce appropriate  assumptions, and we invoke Girsanov's change of probability to show that under an appropriate choice of probability measure,  the original stochastic differential decentralized decision problem is equivalent to a new problem  with distributed observations which are independent, and independent of any of the team decisions. 

Let ${\mathbb Z}_N  \tri \{1,2,\ldots, N\}$ denote  a subset of natural numbers, ${\cal L}({\cal X},{\cal Y})$ denote  Linear transformations mapping a linear  vector space ${\cal X}$ into a vector space ${\cal Y}$, and 
 $ A^{(i)}$ denote the $i$th column of a map $A \in  {\cal L}({\mathbb R}^n,{\mathbb R }^m), i=1, \ldots, n$. 

  Let $\Big(\Omega,{\mathbb F},  \{ {\mathbb F}_{0,t}:   t \in [0, T]\}, {\mathbb P}\Big)$ denote a complete filtered probability space satisfying the usual conditions, that is,  $(\Omega,{\mathbb F}, {\mathbb P})$ is complete, ${\mathbb F}_{0,0}$ contains all ${\mathbb P}$-null sets in ${\mathbb F}$. Note that filtrations $\{{\mathbb F}_{0,t} : t \in [0, T]\}$ are monotone in the sense that ${\mathbb F}_{0,s} \subseteq {\mathbb F}_{0,t}$, $\forall 0\leq s \leq t \leq T$. Moreover,   $\{ {\mathbb F}_{0,t}: t \in [0, T] \}$ is called  right continuous if ${\mathbb F}_{0,t} = {\mathbb F}_{0,t+} \tri \bigcap_{s>t} {\mathbb F}_{0,s}, \forall t \in [0,T)$ and it is called left continuous if  ${\mathbb F}_{0,t} = {\mathbb F}_{0,t-} \tri \sigma\Big( \bigcup_{s<t} {\mathbb F}_{0,s}\Big), \forall t \in (0,T]$. Throughout we assume that all filtrations   are right continuous and complete, and defined by ${\mathbb F}_T \tri \{{\mathbb F}_{0,t}: t \in [0,T]\}$. \\  
 \noi Consider a random process $\{z(t):  t \in [0,T]\}$ taking values in $({\mathbb Z}, {\cal B}({\mathbb Z}))$, where $({\mathbb Z}, d)$ is a metric space, defined   on  the filtered probability space $(\Omega,{\mathbb F},\{{\mathbb F}_{0,t}: t \in [0,T]\}, {\mathbb P})$.  
\begin{comment}
  The process $\{z(t): t \in [0,T]\}$ is said to be measurable  if the map $(t,\omega)  \rar  z(t,\omega)$ is  ${\cal B }([0,T]) \times{\mathbb  F}/{\cal B}({\mathbb Z})-$measurable. 
  The process $\{z(t): t \in [0,T]\}$ is said to be $\{{\mathbb F}_{0,t}: t \in [0,T]\}-$adapted if for all $t \in [0,T]$, the map $\omega \rar z(t,\omega)$ is  ${\mathbb F}_{0,t}/{\cal B}({\mathbb Z})-$measurable.  The process $\{z(t): t \in [0,T]\}$ is said to be $\{{\mathbb F}_{0,t}: t \in [0,T]\}-$progressively measurable if for all $t \in [0,T]$, the map $(s,\omega) \rar z(s,\omega)$ is  $ {\cal B}([0,t]) \otimes {\mathbb F}_{0,t}/{\cal B}({\mathbb Z})-$measurable.
\end{comment}  
    It can be shown that any such stochastic process  
    %$\{z(t):  t \in [0,T]\}$  on a filtered probability space  $(\Omega,%{\mathbb F},  \{ {\mathbb F}_{0,t}:   t \in [0, T]\}, {\mathbb P})$ 
    which is measurable and adapted has a progressively measurable modification \cite{elliott1982}. Unless otherwise specified, we shall say a process $\{z(t):  t \in [0,T]\}$ is $\{{\mathbb F}_{0,t}: t \in [0,T]\}-$adapted if the processes is $\{{\mathbb F}_{0,t}: t \in [0,T]\}-$progressively measurable. 
    
$C([0,T], {\mathbb R}^n)$ denotes the space of continuous real-valued $n-$dimensional  functions defined on the time interval $[0,T]$.

$L_{{\mathbb F}_T}^2([0,T],{\mathbb R}^n) \subset   L^2( \Omega \times [0,T], d{\mathbb P}\times dt,  {\mathbb R}^n) \equiv L^2([0,T], L^2(\Omega, {\mathbb R}^n)) $ denotes the space of $ \{ {\mathbb F}_{0,t}:   t \in [0, T]\}-$adapted random processes $\{z(t): t \in [0,T]\}$   such that
\bes
{\mathbb  E}\int_{[0,T]} |z(t)|_{{\mathbb R}^n}^2 dt < \infty,
\ees   
 which is a sub-Hilbert space of     $L^2([0,T], L^2(\Omega, {\mathbb R}^n))$.      \\
  Similarly, $L_{{\mathbb F}_T}^2([0,T],  {\cal L}({\mathbb R}^m,{\mathbb R}^n)) \subset L^2([0,T] , L^2(\Omega, {\cal L}({\mathbb R}^m,{\mathbb R}^n)))$ denotes the space of  $ \{ {\mathbb F}_{0,t}:   t \in [0, T]\}-$adapted $n\times m$ matrix valued random processes $\{ \Sigma(t): t \in [0,T]\}$ such that
 \bes
  {\mathbb  E}\int_{[0,T]} |\Sigma(t)|_{{\cal L}({\mathbb R}^m,{\mathbb R}^n)}^2 dt  \tri  {\mathbb E} \int_{[0,T]} tr(\Sigma^*(t)\Sigma(t)) dt < \infty.
  \ees

\ \

Next, we describe the set of admissible relaxed strategies. For each $i \in {\mathbb Z}_N$, let ${\mathbb A}^i \subset {\mathbb  R}^{d_i}$  be closed and  bounded (possibly nonconvex),  and let  ${\cal B}({\mathbb A}^i)$ denote the Borel subsets of ${\mathbb A}^i$. Let $C({\mathbb A}^i)$ denote the space of continuous functions on ${\mathbb A}^i$, endowed with the sup norm topology, which makes it a Banach space.   Let    ${\cal M}({\mathbb A}^i)$ denote the space of regular bounded  signed Borel measures   on ${\cal B}({\mathbb A}^i)$, having finite total variation. With respect to this norm topology, ${\cal M}({\mathbb A}^i)$ is also a Banach space. It is well known that the dual of $C({\mathbb A}^i)$ is  ${\cal M}({\mathbb A}^i)$. We are interested    in  ${\cal M}_1({\mathbb A}^i) \subset {\cal M}({\mathbb A}^i)$ the space of regular probability measures.  Using this construction,  the  DM strategies with decentralized  information structures  will be described  through the topological dual of the Banach space   $L_{{\cal G}_T^{y^i}}^1([0, T],C({\mathbb A}^i))$, the $L^1$-space of ${\cal G}_T^{y^i} \tri \{{\cal G}_{0,t}^{y^i}: t \in [0,T]]\}-$ adapted $C({\mathbb A}^i)$ valued functions, for $i \in {\mathbb Z}_N$.   For each $i \in {\mathbb Z}_N$ the  dual of this space is given by  $L_{ {\cal G}_T^{y^i}}^{\infty}([0, T],{\cal M}({\mathbb A}^i))$ which  consists of weak$^*$  measurable  ${\cal G}_T^{y^i}-$adapted ${\cal M}({\mathbb  A}^i)$ valued functions.  For each $i \in {\mathbb Z}_M$ the DM  strategies  are drawn from   $L_{{\cal G}_T^{y^i}}^{\infty}([0, T],{\cal M}_1({\mathbb A}^i)) \subset    L_{{\cal G}_T^{y^i}}^{\infty}([0, T],{\cal M}({\mathbb A}^i))$, the set of probability measure valued ${\cal G}_T^{y^i}-$adapted functions.   Hence, we have the following definition of relaxed strategies. 

\begin{definition}(Admissible Relaxed Noisy Information Strategies)\\
\label{srategiesr}
     The admissible relaxed strategies for DM $i$ are defined by
\begin{align}
  {\mathbb U}_{rel}^i[0,T] \tri      L_{{\cal G}_T^{y^i}}^{\infty}([0, T],{\cal M}_1({\mathbb A}^i)), \hst \forall  i \in {\mathbb Z}_N, \label{rc1}
     \end{align} 
 where ${\mathbb A}^i \subset {\mathbb  R}^{d_i}, \forall i \in {\mathbb Z}_N$  are  closed and  bounded (possibly nonconvex).
\\ 
 An $N$ tuple of relaxed  strategies    is by definition     
     \bes
          {\mathbb U}_{rel}^{(N)}[0,T] \tri \times_{i=1}^N {\mathbb U}_{rel}^i[0,T] , \hst {\cal M}_1({\mathbb A}^{(N)})\tri \times_{i=1}^N {\cal M}_1({\mathbb A}^i), \hst {\mathbb A}^{(N)} \tri \times_{i=1}^N {\mathbb A}^i.
           \ees
\end{definition}

\noi Thus, for any $i  \in {\mathbb Z}_N$, given the information ${\cal G}_{ T}^{y^i}$,   $ \{u_t^i: t \in [0, T]\}$ is a stochastic kernel (regular conditional distribution) defined by
\bes
u_t^i(\Gamma)=  q_t^i(\Gamma | {\cal G}_{0, t}^{y^i}), \hst  \mbox{for} \hso t \in [0, T], \hso \mbox{and }\hst \forall \:\Gamma \; \in {\cal B}({\mathbb A}^i).
\ees
Clearly,  for each $ i \in {\mathbb Z}_N$ and for  every $\varphi \in C({\mathbb A}^i)$ the process
\bes
\int_{{\mathbb A}^i} \varphi(\xi) u_t^i(d\xi) = \int_{{\mathbb A}^i} \varphi(\xi) q_t^i(d\xi | {\cal G}_{0,t}^{y^i} ), \hst t \in [0,T],
\ees
is ${\cal G}_{T}^{y^i}-$ progressively measurable. For each $i \in {\mathbb Z}_N$, the space   $L_{{\cal G}_T^{y^i}}^{\infty}([0, T],{\cal M}_1({\mathbb A}^i))$ is endowed with the weak$^*$ topology, also called vague topology. A generalized  sequence $u^{i,\alpha} \in {\mathbb  U}_{rel}^i[0,T]$ is said to converge (in the weak$^*$ topology or)  vaguely  to $u^{i,o},$ written $ u^{i,\alpha}\buildrel v\over\longrightarrow u^{i,o}$, if and only if  for every  $\varphi \in L_{ {\cal G}_T^{y^i}}^1([0,T],C({\mathbb A}^i))$
\bes
  {\mathbb  E} \int_{[0,T] \times {\mathbb A}^i}  \varphi_t(\xi) u_t^{i,\alpha}(d\xi) dt \longrightarrow  {\mathbb E} \int_{[0,T] \times {\mathbb A}^i}  \varphi_t(\xi) u_t^{i,o}(d\xi) dt \hst \mbox{as} \hso \alpha \longrightarrow \infty, \hst \forall i\in {\mathbb Z}_N.
  \ees
With respect to    the vague (weak$^*$)  topology the set ${\mathbb U}_{rel}^i[0,T]$ is  compact, and from here on we assume that ${\mathbb U}_{rel}^i[0, T], \forall i \in {\mathbb Z}_N$ has been endowed with this vague topology.

For relaxed strategies $u \in {\mathbb U}_{rel}^{(N)}[0,T]$, we use the following notation for the drift coefficient 
\begin{align}
f(t,x,u_t)  \tri  \int_{{\mathbb A}^{(N)}} \Big( f(t,x,\xi^1, \xi^2, \ldots, \xi^N)\Big)  \times_{i=1}^N u_t^i(d\xi^i) , \hst   t \in [0,T),\label{pd1}
\end{align} 
and similarly for $\{\sigma, h, \ell\}$ in  (\ref{pp1})-(\ref{pp4}). \\

Next,  we define the set of admissible decentralized noisy  information strategies of the team members, called regular strategies, (deterministic measurable functions). 

\begin{definition}(Admissible Regular Noisy Information Strategies)\\
\label{srategiesp}
     The admissible regular strategies for DM $i$ are defined by 
 \begin{align}
 {\mathbb U}_{reg}^i[0, T] \tri  L_{{\cal G}_T^{y^i}}^\infty([0,T] \times \Omega, {\mathbb A}^{i}), \hst \forall  i \in {\mathbb Z}_N, \label{pi3}
     \end{align}
the class of ${\cal G}_T^{y^i}-$adapted random processes defined on $[0,T]$  and taking vlaues from the closed bounded set ${\mathbb A}^i \subset {\mathbb R}^{d_i}, \forall i \in {\mathbb Z}_N$.  Note that if ${\mathbb A}^i$ is a closed, bounded and convex subset of ${\mathbb R}^{d_i}$ then   ${\mathbb U}_{reg}^i[0, T]$ is a  closed convex subset of  $L_{{\cal G}_T^{y^i}}^\infty([0,T] \times \Omega, {\mathbb A}^{i})$, $\forall i \in {\mathbb Z}_N$. 
 An $N$ tuple of regular strategies    is by definition $$(u^1,u^2, \ldots, u^N) \in {\mathbb U}_{reg}^{(N)}[0,T] \tri \times_{i=1}^N {\mathbb U}_{reg}^i[0,T].$$
\end{definition}

Notice that 
the class  of regular strategies  embeds continuously  into the class of relaxed decisions through the map $u \in {\mathbb U}_{reg}^{(N)}[0,T] \longrightarrow \delta_{u_t(\omega)}\in {\mathbb U}_{rel}^{(N)}[0,T].$   Clearly, for every $f \in L_{{\cal G}_T^y}^1([0,T] \times \Omega, C({\mathbb A}^{(N)}))$ we have
\begin{eqnarray}
  {\mathbb E} \int_{ [0,T]  \times {\mathbb A}^{(N)}}f(t,\omega,\xi) \delta_{u_t(\omega)}(d\xi) dt  = {\mathbb E}\int_{ [0,T]  } f (t,\omega,u_t^1(\omega), \ldots, u_t^N(\omega))dt. \nonumber 
   \end{eqnarray}

There are several advantages of using relaxed strategies. For example, if optimal regular strategies  exist from the admissible class ${\mathbb U}_{reg}^{(N)}[0,T] \subset {\mathbb U}_{rel}^{(N)}[0,T]$  then the optimality conditions of relaxed strategies can be specialized to the class of  strategies which are simply Dirac measures concentrated $\{u_t^o : t \in [0,T]\} \in {\mathbb U}_{reg}^{(N)}[0,T]$. Thus, the necessary conditions for team and PbP optimality  for regular strategies follow readily from those of relaxed strategies. Another advantage is the realizability of relaxed strategies by regular strategies, which is often establsihed via the Krein-Millman theorem, without requiring convexity of ${\mathbb A}^{(N)}$. Both advantages are  discussed in the paper.  

Next, we state the basic assumptions on the stochastic differential dynamics and decentralized observations, assuming relaxed strategies.

\begin{assumptions}(Main assumptions)\\
\label{A1-A4}
The drift $f$, diffusion coefficients $\sigma$, and measurement functions  $h^i, i=1, \ldots, N$ in (\ref{pp3}), (\ref{pp4})  are     Borel measurable  maps: 
\begin{align}
 f: [0,T] \times {\mathbb R}^n \times & {\mathbb A}^{(N)} \longrightarrow {\mathbb R}^n , \hst  \sigma: [0,T] \times {\mathbb R}^n \times {\mathbb A}^{(N)} \longrightarrow {\cal L}({\mathbb R}^m, {\mathbb R}^n), \nonumber \\
& h^i : [0,T] \times {\mathbb R}^n \times {\mathbb A}^{(N)}\longrightarrow {\mathbb R}^{k_i}, \hso \forall i \in {\mathbb Z}_N, \nonumber 
 \end{align}
which satisfy the following basic conditions. 

\begin{description}
\item[(A0)] ${\mathbb A}^i \subset {\mathbb R}^{d_i}$ is closed and bounded,  $\forall i \in {\mathbb Z}_N$.

\end{description}

\noi There exists a $K \in L^{2,+}([0,T], {\mathbb R})$ such that

\begin{description}
\item[(A1)] $|f(t,x,
\xi)-f(t,z,\xi)|_{{\mathbb R}^n} \leq K(t) |x-z|_{{\mathbb R}^n}$ uniformly in $\xi \in {\mathbb A}^{(N)}$;

\item[(A2)] $|f(t,x,\xi)|_{{\mathbb R}^n} \leq K(t) (1 + |x|_{{\mathbb R}^n}   )$ uniformly in $\xi \in {\mathbb A}^{(N)}$;

\item[(A3)] $|\sigma(t,x,\xi)-\sigma(t,z,\xi)|_{{\cal L}({\mathbb R}^m, {\mathbb R}^n)} \leq K(t) |x-z|_{{\mathbb R}^n}$ uniformly in  $\xi \in {\mathbb A}^{(N)}$;

\item[(A4)] $|\sigma(t,x,\xi)|_{{\cal L}({\mathbb R}^m, {\mathbb R}^n)} \leq K(t) (1+ |x|_{{\mathbb R}^n}    )$ uniformly in $\xi \in {\mathbb A}^{(N)}$;

\item[(A5)] $|h^i(t,x,\xi)|_{{\mathbb R}^{k_i}} \leq K(t) (1+ |x|_{{\mathbb R}^n} )$ uniformly in $\xi \in {\mathbb A}^{(N)}$ $\forall i \in {\mathbb Z}_N$;

\item[(A6)] $|h^i(t,x,\xi)-h^i(t,z,\xi)|_{{\cal L}({\mathbb R}^n} \leq K(t) |x-z|_{{\mathbb R}^n}$ uniformly in  $\xi \in {\mathbb A}^{(N)}, \forall i \in {\mathbb Z}_N$;

\item[(A7)] $f(t,x,\cdot), \sigma(t,x,\cdot), h^i(t,x,\cdot), i=1, \ldots, N$ are continuous in $\xi \in {\mathbb A}^{(N)}, \forall (t,x) \in [0,T] \times {\mathbb R}^n$; 
 
\item[(A8)] $D^{i,\frac{1}{2}}$ is measurable in $t \in  [0,T]$, uniformly bounded, the inverse  $D^{i,-\frac{1}{2}}$ exists and it is uniformly bounded, $\forall i \in {\mathbb Z}_N$. % and  $\int_{[0,T]} tr \Big( D^i(t)\Big)dt < \infty,  D^i (t) \tri D^{i,\frac{1}{2}}(t) D^{i,\frac{1}{2},*}(t)), i=1, \ldots, N,$

\end{description}
Often, for simplicity we shall replace {\bf (A5)} by the following condition. \begin{comment} 
$h^i(t,\cdot, \cdot)$ is continuous in $(x,u) \in {\mathbb R}^n \times {\mathbb U}^i$ uniformly in $t\in [0,T], \forall i \in {\mathbb Z}_N$, and  \end{comment} 
There exist a $K >0$ such that 

\begin{description}

\item[(A5')] $|h^i(t,x,\xi)|_{{\mathbb R}^{k_i}} \leq K$,  $\forall (t,x, \xi) \in [0, T] \times {\mathbb R}^n \times {\mathbb A}^{(N)}, \forall i \in {\mathbb Z}_N$. 

\end{description}

\end{assumptions}

\subsection{Team Problem Under  Reference Probability Space} 
\label{partial}
In this section we formulate the stochastic team problem utilizing  the Girsanov change of measure approach, which is  based on constructing a filtered probability space    
 $\Big(\Omega,{\mathbb F},\{ {\mathbb F}_{0,t}:   t \in [0, T]\}, {\mathbb P}^u\Big)$ and Brownian motions  $\{W(t): t \in [0,T]\}$ and $\{B^u(t): t \in [0,T]\}$ defined on it, such that $\{x^u(t): t \in [0,T]\}$ and $\{y(t) : t \in [0,T]$ are  the weak solution of (\ref{pp3}) and (\ref{pp4}) with respect to relaxed strategies  (with $B$ replaced by $B^u$ and ${\mathbb P}_\Omega$ by ${\mathbb P}^u$). Moreover, under the reference probability space $\Big(\Omega,{\mathbb F},\{ {\mathbb F}_{0,t}:   t \in [0, T]\}, {\mathbb P}\Big)$, $\{x^u(t): t \in [0,T]\}$ is a weak solution of (\ref{pp3}),  while  the observations $\{y^i(t): t \in [0,T]\}$, are independent Brownian motions, which are  independent  of the  team decisions,  that is, they are fixed and unaffected by $u^i,  i=1,2,\ldots, N$. Consequently,  the information structures of each  team member is  independent of $u$.   \\
 
\noi Let $\Big(\Omega, {\mathbb F}, {\mathbb P}\Big)$ be the canonical space of   $(x_0, \{W(t),\{B^i(t): i=1,\ldots, N\}: t \in [0,T]\})$ which are defined by 
\begin{description}
\item[(WP1)] $x(0)=x_0$:  an ${\mathbb R}^n$-valued  Random Variable with distribution $\Pi_0(dx)$;

\item[(WP2)] $\{ W(t): t \in [0,T]\}$: an  ${\mathbb R}^{m}$-valued  standard Brownian motion,  independent of $x(0)$;

\item[(WP3)] $\{ B^i(t): t \in [0,T]\}$, $i=1,\ldots, N$:  ${\mathbb R}^{k_i}$-valued, $i=1, \ldots, N$, mutually independent Standard Brownian motions, independent of $\{W(t): t \in [0,T]\}$.

\end{description}
We introduce the Borel $\sigma-$algebra  ${\cal B}(C[0,T], {\mathbb R}^m))$  on $C([0,T], {\mathbb R}^m)$, the space of continuous $m-$dimensional functions on finite  time $[0,T]$,  generated by $\{W(t): 0\leq t \leq T\}$   and let ${\mathbb P}^W$ its  Wiener measure on it.\\
Similarly, we introduce the Borel $\sigma-$algebra    ${\cal B}(C(0,T], {\mathbb R}^{k_i}))$ on $C([0,T], {\mathbb R}^{k_i})$    generated by $\{y^i(t): 0\leq t \leq T\}$ and let ${\mathbb P}^{y^i}$ its  Wiener mesure on it, 
for  $i=1,\ldots, N$. We define  the Borel $\sigma-$algebra  ${\cal B}(C(0,T], {\mathbb R}^{k})) \tri \otimes_{i=1}^N  {\cal B}(C(0,T], {\mathbb R}^{k_i}))$ on $\otimes_{i=1}^N  (C(0,T], {\mathbb R}^{k_i})$ generated by $\{y^1(t), \ldots, y^N(t): 0 \leq t \leq T\}$,  $k=\sum_{i=1}^N k_i$, and its Wiener product measure  ${\mathbb P}^{y} \tri \times_{i=1}^N {\mathbb P}^{y^i}$ on it.\\
Further, we introduce the filtration  $\{{\cal F}_{0,t}^{W}: t \in [0,T]\}$  generated by truncations of $W \in C([0,T], {\mathbb R}^m)$, and the filtration  $\{{\cal G}_{0,t}^{y^i}: t \in [0,T]\}$  generated by  truncations of  $y^i \in C([0,T], {\mathbb R}^{k_i}), i=1, \ldots, N$, and we define ${\cal G}_{0,t}^y \tri \otimes_{i=1}^N {\cal G}_{0,t}^{y^i}, t \in [0,T]$. That is, for $t \in [0,T]$, ${\cal F}_{0,t}^W$ is the sub-$\sigma$-algebra generated by the family of sets
\bes
\Big\{ \{ W \in C([0,T], {\mathbb R}^m): w(s) \in A \}:  \hso 0\leq s \leq t, \hso A \in {\cal B}({\mathbb R}^m) \Big\}, \hst t \in [0,T].
\ees
Hence, $\{{\cal F}_{0,t}^{W}: t \in [0,T]\}$ is the canonical Borel filtration and ${\cal F}_{0,T}^{W}= {\cal B}(C[0,T], {\mathbb R}^m))$.\\
Define the canonical probability space $\Big(\Omega, {\mathbb F}, \{{\mathbb F}_{0,t}: t \in [0,T]\},    {\mathbb P}\Big)$, called the reference probability space by

\begin{align} 
&\Omega \tri {\mathbb R}^n \times C([0,T], \re^m)  \times C([0,T], {\mathbb R}^{k}),  \label{gf}  \\
&{\mathbb F} \tri {\cal B}({\mathbb R}^n) \otimes {\cal B}( C([0,T], {\mathbb R}^m) \otimes   {\cal B}( C([0,T], {\mathbb R}^{k}), \label{gp} \\
& {\mathbb F}_{0,t} \tri {\cal B}({\mathbb R}^n) \otimes {\cal F}_{0,t}^W   \otimes  {\cal G}_{0,t}^y, \hst t \in [0,T], \label{GF} \\
&{\mathbb P} \tri  \Pi_0 \times {\mathbb P}^W \times {\mathbb P}^{y} \label{GP}.
\end{align}
A typical element  of $\Omega$ is $\omega(t)= \Big(x(0),W(t,\omega),y(t,\omega)\Big) , 0 \leq t \leq T$.\\
On the the reference probability space $\Big(\Omega, {\mathbb F}, {\mathbb P}\Big)$ we define the decentralized observations by  
\bea
y^i(t) = \int_{0}^t D^{i,\frac{1}{2}}(s) dB^i(s), \hst t \in [0,T], \hso i =1,\ldots, N. \label{pi1}
\eea
Clearly, under the reference probability measure ${\mathbb P}$, the observations $\{y^i(t): t \in [0,T]\}$, are independent Brownian motions, and hence  they are fixed and unaffected by $u^i,  i=1,2,\ldots, N$, and  consequently,  the information structures of each  player are independent of $u$.

\noi On the probability space $\Big(\Omega, {{\mathbb P}}\Big)$  by Assumptions~\ref{A1-A4}, {\bf (A1), (A2), (A3), (A4)}, for any $x(0)$ with finite second moment,  and team strategy $u \in {\mathbb U}_{rel}^{(N)}[0,T]$ then $\{x^u(t): t \in [0,T]\}$ is the pathwise unique $\{{\mathbb F}_{0,t}: t \in [0,T]\}-$adapted $C([0,T], {\mathbb R}^n), {\mathbb P}-a.s.$ solution  of
\bea
dx^u(t)=f(t,x^u(t),u_t)dt + \sigma(t,x^u(t),u_t)dW(t), \hst x(0)=x_0, \hso t \in (0,T]. \label{pi4}
\eea
Notice that   $u_t^i \equiv q_t^i(d\xi | {\cal G}_{0,t}^{y^i})$ is adapted to the family $\{ {\cal G}_{0,t}^{y^i}: t \in [0,T]\}$ which is fixed   and independent of $u$ (since $y^i$ is a Brownian motion).  \\
Next, for any $u \in {\mathbb U}_{rel}^{(N)}[0,T]$ and for each observation process $\{y^i(t): 0\leq t \leq T\}$ defined on $\Big( \{{\mathbb F}_{0,t}: t \in [0,T]\}, {{\mathbb P}}\Big)$, we introduce the exponential functions 
\begin{align}
\Lambda^{i,u}(t)  \tri & \exp \Big\{ \int_{0}^t h^{i,*}(s,x(s),u_s)D^{i,-1}(s) dy^i(s)  \nonumber \\
&-\frac{1}{2}  \int_{0}^t h^{i,*}(s,x(s),u_s)D^{i,-1}(s) h^{i}(s,x(s),u_s)ds \Big\}, \hst   \: t \in [0,T], \hso \forall i \in {\mathbb Z}_N, \label{pi5}
\end{align}
and their products by
\bea
\Lambda^u(t) \tri \prod_{i=1}^N \Lambda^{i,u}(t), \hst t \in [0,T]. \label{pi6}
\eea
Under Assumptions~\ref{A1-A4}, {\bf (A5), (A8)} the processes $\{\Lambda^{i,u}(t): t \in [0,T]\}$ is a super martingale and by It\^o's differential rule it is the unique $\{{\mathbb F}_{0,t}: t \in [0,T]\}-$adapted continuous solution of the stochastic differential equation 
\bea
d \Lambda^{i,u}(t) = \Lambda^{i,u}(t) h^{i,*}(t,x(t),u_t)D^{i,-1}(t) dy^i(t),  \hst  \Lambda^{i,u}(0)=1, \hso   t \in [0,T], \hso    i=1, \ldots, N. \label{pi7}
\eea
Then for any admissible strategy $u \in {\mathbb U}_{rel}^{(N)}[0,T]$,   $\{\Lambda^u(t) : 0 \leq t \leq T\}$ is also an $\Big( \{{\mathbb F}_{0,t}: t \in [0,T]\}, {\mathbb P}\Big)$-super martingale and satisfies the stochastic differential equation
\bea
d \Lambda^u(t) = \Lambda^u(t) \sum_{i=1}^N h^{i,*}(t,x(t),u_t)D^{i,-1}(t) dy^i(t),  \hst  \Lambda^u(0)=1, \hso   t \in [0,T]. \label{pi8}
\eea
Given a $u \in {\mathbb U}_{rel}^{(N)}[0,T]$ we define the reward  of the team game under the reference probability space  $\Big(\Omega, {\mathbb  F}, {\mathbb P}\Big)$ by 
 \begin{align} 
  J(u) \tri
    {\mathbb E} \biggl\{   \int_{0}^{T}   \Lambda^u(t) \ell(t,x(t),u_t) dt +  \Lambda^u(T) \varphi(x(T)\biggr\}, \label{pi9}
  \end{align} 
  where $\ell : [0,T] \times {\mathbb R}^n \times {\mathbb A}^{(N)} \longrightarrow (-\infty, \infty], \varphi: {\mathbb R}^n \longrightarrow (-\infty, \infty]$ are chosen so that (\ref{pi9}) is finite. \\
Notice that under the reference probability measure ${\mathbb P}$, the pay-off (\ref{pi9}) with $\Lambda^u(\cdot)$ given by (\ref{pi8}), subject to the state process satisfying (\ref{pi4}) is a transformed problem with observations which are not affected by any of the team decisions. It remains to show whether this transformed problem is equivalent to the original stochastic differential decentralized decision problem.\\

   If  we further assume that {\bf (A5')} holds, then $\{ \Lambda^{i,u}(t): 0 \leq t \leq T\}$ is an $\Big(\{{\mathbb F}_{0,t}: t \in [0,T]\}, {\mathbb P}\Big)$-martingale for $i=1, \ldots, N$, and the team  reward (\ref{pi9}) subject to stochastic constraints of $x^u(\cdot), \Lambda^u(\cdot)$  defined by (\ref{pi4}), (\ref{pi8}), respectively,  is  equivalent to the team game (\ref{pp1})-(\ref{pp4}) (with regular strategies replaced by relaxed strategies, and ${\mathbb E}^{{\mathbb P}_\Omega}$ repalced by ${\mathbb E}^u$ and ${\mathbb P}_\Omega$ by ${\mathbb P}^u$). \\
Indeed if {\bf (A5')} holds, by the $\Big( \{ {\mathbb  F}_{0,t}: t \in [0,T]\}, {\mathbb P}\Big)-$martingale property of $\Lambda^u(\cdot)$   defined by (\ref{pi8}),   it has constant expectation and  hence,   
 $\int_{\Omega} \Lambda^u(t,\omega) d{\mathbb P}(\omega)=1, \forall t \in [0,T]$. Therefore, we can introduce a probability measure ${\mathbb P}^u$ on $\Big(\Omega,\{ {\mathbb F}_{0,t}: t \in [0,T]\}\Big)$ by setting 
\bea
\frac{ d{\mathbb P}^u}{ d {{\mathbb P}}} \Big{|}_{ {\mathbb F}_T} = \Lambda^u(T)    \label{pi10}
\eea     
Moreover, under the probability measure $\Big(\Omega, {\mathbb  F}, {\mathbb P}^u\Big)$, then $\{B^{i,u}: t\in [0,T]\}$ is a standard Brownian motion  defined by
\bea
B^{i,u}(t) \tri B^i(t)-\int_{0}^t D^{i,-\frac{1}{2}}(s)h^i(s,x(s),u_s)ds, \hst t \in [0,T], \hso i=1,2,\ldots, N. \label{pi11}
\eea
Hence, by (\ref{pi11}) the observations of the team members  are defined by
\bea
dy^i(t) = h^i(t,x(t),u_t)dt+D^{i,\frac{1}{2}}(t)dB^{i,u}(t) \hst t \in [0,T], \hso i=1,2,\ldots, N, \label{pi12}
\eea
and the state process $\{x(t): t \in [0,T]\}$ is defined by  (\ref{pi4}) (its distribution is unchanged). Thus,  we have constructed the probability space  $\Big(\Omega,{\mathbb F},\{ {\mathbb F}_{0,t}:   t \in [0, T]\}, {\mathbb P}^u\Big)$  and    Brownian motion $\{B^{1,u}(t), \ldots, B^{N,u}(t): t \in [0,T]\}, \{W(t): t \in [0,T]\}$ defined on it such that $\{x(t),y^1(t), \ldots,y^N(t): t \in [0,T]\}$ are weak solutions,  of  (\ref{pi12}), and (\ref{pi4}).  Moreover, under the probability space $\Big(\Omega,{\mathbb F},\{ {\mathbb F}_{0,t}:   t \in [0, T]\}, {\mathbb P}^u\Big)$,  by using (\ref{pi10}) into (\ref{pi9})   then  
 the team problem reward is given by
\bea
J(u) = {\mathbb E}^u \Big\{ \int_{0}^T \ell(t,x(t),u_t)dt + \varphi(x(T))\Big\} . \label{pi13}
\eea
Therefore, we have two equivalent formulations of the stochastic differential team game. The one defined under probability space $\Big(\Omega,{\mathbb F},\{ {\mathbb F}_{0,t}:   t \in [0, T]\}, {\mathbb P}^u\Big)$ given by (\ref{pi4}), (\ref{pi12}), (\ref{pi13}), and the one defined under the reference probability space $\Big(\Omega,{\mathbb F},\{ {\mathbb F}_{0,t}:   t \in [0, T]\}, {\mathbb P}\Big)$ given by (\ref{pi4}), (\ref{pi8}), (\ref{pi9}), in which $\{y^i(t): t \in [0,T]\}, i=1, \ldots, N\}$ are Brownian motions. \\
One of the important aspects  of this weak Girsanov formulation is that the probability measure ${\mathbb P}^u$ and the observations  Brownian motions $\{B^{i,u}(t): t \in [0,T]\}, i =1, \ldots, N$ depend on $u$ but the filtrations $\{ {\cal G}_{0,t}^{y^i}: t \in [0,T]\}, i=1, \ldots, N$ are fixed \'a priori and they are independent of $u$ (although they are correlated and not generated by Brownian motions). Girsanov's approach is used extensively in the derivation of maximum principe for both fully observed and partially observed centralized stochastic control problems with regular strategies \cite{elliott1977,elliott-yang1991,bensoussan1981,haussmann1986,elliott-kohlmann1989,charalambous-hibey1996}, by working under the reference probability measure ${\mathbb P}$, in which the observations are Brownian motions.

%%%% Short
\begin{comment}
 In \cite{charalambous-ahmedPIS_2012} we derived optimality conditions by a considering a strong formulation of the stochastic differential team game given by (\ref{pp1})-(\ref{pp4}) using regular strategies. However, we encountered several technical difficulties   attributed to the strong formulation, which implies   the filtration  $\{ {\mathbb F}_{0,t}:   t \in [0, T]\}$ is given \^a priori, it depends on $u$, and it is not generated by Brownian motions. Hence, in the process of  applying the semi martingale representation to derive the maximum principle  we introduced  additional  strong conditions, such as, Lipschitz  continuity of the strategies, and we consider only strategies which are measurable with respect to the innovations process of the information structures available to the DMs. 
 
In this paper, by invoking  the weak Girsanov formulation we  derive new optimality conditions under the reference probability space $\Big(\Omega,{\mathbb F},\{ {\mathbb F}_{0,t}:   t \in [0, T]\}, {\mathbb P}\Big)$, to take advantage of the fact that under ${\mathbb P}$ the filtration  $\{ {\mathbb F}_{0,t}:   t \in [0, T]\}$ is generated by Brownian motions, and the state process under measure ${\mathbb P}$ defined  by (\ref{pi4}) is adapted to it, thus removing unnecessary conditions and restrictions imposed in \cite{charalambous-ahmedPIS_2012}.  
\end{comment}

In the above formulation we have imposed  condition {\bf (A5')} so   that  $\{ \Lambda^{u}(t): 0 \leq t \leq T\}$ is an $\Big(\{{\mathbb F}_{0,t}: t \in [0,T]\}, {\mathbb P}\Big)$-martingale; a sufficient condition for $\Lambda^u(\cdot)$  to be such a martingale  is the Novikov \cite{liptser-shiryayev1977} condition, ${\mathbb E} \exp \Big\{ \frac{1}{2} \int_{0}^T |D^{-\frac{1}{2}}(s) h(s,x(s),u_s)|_{{\mathbb R}^k}^2 ds \Big\} < \infty$.

\ \

{\bf Equivalent Team Game Under Reference  Probability Space-$\Big(\Omega, {\mathbb F},{\mathbb P}$\Big) }\\

\noi Next, we define the equivalent team game  under the reference probability space $\Big(\Omega,{\mathbb F},\{ {\mathbb F}_{0,t}:   t \in [0, T]\}, {\mathbb P}\Big)$, by considering the augmented state process $\{\Lambda, x\}$ defined by (\ref{pi8}),  (\ref{pi4}), and reward (\ref{pi9}).

Define the augmented  vectors, drift, diffusion coefficients, and functions in the pay-off  as follows.
\begin{align}
X \tri & Vector\{\Lambda, x\} \in {\mathbb R} \times {\mathbb R}^n, \: B \tri Vector\{B^1,B^2, \ldots, B^N\} \in {\mathbb R}^{k} \tri \times_{i=1}^N {\mathbb R}^{k_i}, \nonumber \\
 \: y \tri &Vector\{y^1,y^2, \ldots, y^N\} \in {\mathbb R}^k, \: \overline{W}\tri Vector\{D^{\frac{1}{2}}B, W\} \in {\mathbb R}^{k+m} \nonumber 
\end{align}
\begin{align}
&F(t,X, u) \tri \left[ \begin{array}{c} 0 \\ f(t,x,u)  \end{array} \right], \hso G(t,X, u) \tri \left[ \begin{array}{cc} \Lambda h^*(t,x,u)D^{-\frac{1}{2}}(t) & 0 \\ 0 & \sigma(t,x,u) \end{array} \right], \nonumber  \\
 &h(t,x,u) \tri Vector\{h^1(t,x,u),\ldots, h^N(t,x,u)\}, \hst         D(t) =\diag\{D^{1}(t), \ldots, D^{N}(t)\}, \nonumber \\
 &L(t,X,u) \tri \Lambda \ell(t,x,u), \hst \Phi(X) \tri \Lambda \varphi(x). \nonumber 
\end{align}
Then, under the reference probability space $\Big(\Omega, {\mathbb  F}, {\mathbb P}\Big)$ the augmented state satisfies the stochastic differential equation  
\bea
dX(t) = F(t,X(t),u_t)dt + G(t,X(t),u_t)  d\overline{W}(t), \hst X(0)=X_0, \hst t \in (0,T]. \label{pi14}  
 \eea
The reward is given by 
\bea
J(u) = {\mathbb E} \Big\{ \int_{0}^T L(t,X(t),u_t)dt + \Phi(X(T))\Big\} . \label{pi15}
\eea
Therefore, using the second method we shall investigate the  equivalent team game under the reference measure ${\mathbb P}$ described  by (\ref{pi14}) and (\ref{pi15}), with augmented state $X=Vector\{ \Lambda,x\}$, $\{{\cal G}_{0,t}^{y^i}: t \in [0,T]\}$ a fixed filtration  generated by Brownian motions, for $i=1, \ldots, N$,   filtration   $\{ {\mathbb F}_{0,t}:   t \in [0, T]\}$  generated by Brownian motions and the initial condition, i.e.  both  filtrations are independent of any of the team  decisions.   Indeed, under the reference probability measure ${\mathbb P}$ we use the second method to derive team and PbP optimality conditions  for the stochastic differential decentralized decision problem  (\ref{pi14}),  (\ref{pi15}), by  applying stochastic variational methods, the semi martingale representation tehorem and the Riesz representation theorem for Hilbert space processes.  This is the approach we consider in  Section~\ref{optimality}.\\

We are now ready to state the   rigorous definitions of team optimality and PbP optimality.

 \begin{problem}(Team Optimality) 
 \label{problemfp1}
Given the  pay-off functional (\ref{pi13}),   state constraint (\ref{pi4}), observations (\ref{pi12}),  and the admissible relaxed noisy information strategies,  the  $N$ tuple of strategies   $u^o \tri (u^{1,o}, u^{2,o}, \ldots, u^{N,o}) \in {\mathbb U}_{rel}^{(N)}[0,T]$  is called team optimal if it satisfies 
 \bea
 J(u^{1,o}, u^{2,o}, \ldots, u^{N,o}) \leq J(u^1, u^2, \ldots, u^N),  \hst \forall  u\tri (u^1, u^2, \ldots, u^N) \in {\mathbb U}_{rel}^{(N)}[0,T], \label{fi11}
 \eea
 and its corresponding $x^o(\cdot)\equiv x(\cdot; u^o(\cdot))$ (satisfying (\ref{pi4})) is called an optimal state process. \\
Under the reference probability space $\Big(\Omega,{\mathbb F},\{ {\mathbb F}_{0,t}:   t \in [0, T]\}, {\mathbb P}\Big)$ this problem is equivalent to (\ref{pi14}) and (\ref{pi15}).
  \end{problem}
 
 By definition, Problem~\ref{problemfp1} is a special case of  stochastic dynamic games, in the sense that there is only one reward or pay-off criterion, instead of an individual pay-off criterion for each team member $u^i, i \in {\mathbb Z}_N$. Thus, the decision making authority is distributed among the $N$ team members and their collective actions are evaluated based on a single reward. Moreover, Problem~\ref{problemfp1} is a stochastic dynamic team problem with team members having access to different information structures.  An alternative approach to handle such problems  is to restrict the definition of optimality to   the so-called PbP equilibrium as  defined next. 
 
 \begin{problem}(PbP Optimality)
 \label{problemfp2}
Given the  pay-off functional (\ref{pi13}),   state constraint (\ref{pi4}), observations (\ref{pi12}),  and the admissible relaxed noisy information strategies,  the  $N$ tuple of strategies   $u^o  \in {\mathbb U}_{rel}^{(N)}[0,T]$  is called PbP optimal if it satisfies 
\begin{align}
 \tilde{J}(u^{i,o}, u^{-i,o}) \leq \tilde{J}(u^{i}, u^{-i,o}), \hst \forall u^i \in {\mathbb  U}_{rel}^{i}[0,T], \hso \forall i \in {\mathbb Z}_N, \label{fi12}
 \end{align}
where 
 \bes
 \tilde{J}(v,u^{-i})\tri J(u^1,u^2,\ldots, u^{i-1},v,u^{i+1},\ldots,u^N).
 \ees
 Under the probability space $\Big(\Omega,{\mathbb F},\{ {\mathbb F}_{0,t}:   t \in [0, T]\}, {\mathbb P}\Big)$ this problem is equivalent to (\ref{pi14}) and (\ref{pi15}).
  \end{problem}
 
\begin{comment} 
 
  Even for Problem~\ref{problemfp2}  the  authors of this paper are not aware of any publication  which addresses necessary and/or sufficient conditions of optimality, aside from \cite{charalambous-ahmedPIS_2012}. The PbP optimal strategy states that none of the $N$ players with different  information structures can deviate unilaterally from the optimal strategy and gain by doing so. The rationale for the restriction to PbP optimal strategy is based on the fact that the actions of the  $N$ players 
are not communicated to each other, and hence they cannot
do better than restricting attention to this optimal actions.
\end{comment}

However, for team problems PbP optimality is often of  interest provided one can identify sufficient conditions so that PbP optimality implies team optimality, much as is done in the Static Team Theory of Marschak and Radner.

\begin{remark}
\label{ogames}
In some applications it might be appropriate to consider other variations of decentralized optimality criteria such as, Nash-Equilibrium, minimax games, etc. For example, robustness of centralized control systems is often dealt with via minimax techniques, and risk-sensitive pay-off functionals \cite{charalambous-hibey1996}. These optimization criteria can be dealt with by using the current Girsanov measure transformation. 
\end{remark}

\subsection{Function Space Integration: Equivalence of Static and Dynamic Team Problems}
\label{path-i}
In this section, we discuss the  derivation of team and PbP optimality condition via the first method, based on function space integration of functionals of Brownian motions with respect to the product of Wiener measures. As we have elaborated in Section~\ref{introduction}, we will also generalize Witsenhausen's  equivalence between discrete-time static and dynamic team problems \cite{witsenhausen1988}, to an unobserved state which is a random processes.  We will identify the exact expression of the common denominator condition (described in \cite{witsenhausen1988}), and we will   demonstrate the  limitations of applying this method to compute the optimal team strategies,  in terms of computational  tractability, at least for problems with large number of decision stages.\\

\noi{\bf Continuous-Time Stochastic Dynamic Team Problems.}

We need the following assumption.

\begin{assumptions}
\label{ass-drift}
The Borel measurable diffusion coefficient $\sigma$ in (\ref{pp3}) is replaced by\footnote{$G$ can be allowed to depend on $x$.}  
\begin{description}
\item[(A9)] $G: [0,T] \longrightarrow {\cal L}({\mathbb R}^n, {\mathbb R}^n)$ (i.e. it is independent of $(x,\xi) \in {\mathbb R}^n \times {\mathbb A}^{(N)}$), $G^{-1}$ exists and both are  uniformly bounded;

\item[(A10)] There exists a $K>0$ such that $|G^{-1}(t)f(t,x,\xi)|_{{\mathbb R}^n} \leq K, \forall (t,x,\xi) \in [0,T] \times {\mathbb R}^n \times {\mathbb A}^{(N)}$.
\end{description} 

\end{assumptions}

Under the additional Assumptions~\ref{ass-drift}, by Girsanov's theorem, we define the original stochastic differential decision problem  (\ref{pi13}),   (\ref{pi4}),  (\ref{pi12}) (with $\sigma(t,x,u)=G(t)$) by
 considering two consecutive change of  probability measures as follows.

We start with a  probability space $\Big(\Omega, {\mathbb F}, \{ {\mathbb F}_{0,t}: t \in [0,T]\}, \overline{{\mathbb P}}\Big)$, under which $\{(x(t), y(t)): t \in [0,T]\}$ are  defined by  
\bea
x(t) = x(0)+ \int_{0}^t G(s) dW(s)\equiv x(0)+ \widehat{W}(t), \hst y(t) = \int_{0}^t D^{\frac{1}{2}}(s)dB(s),  \label{path4}
\eea
where $\{ (W(t), B(t)): t \in [0,T]\}$ are independent standard Brownian motions (independent of $x(0)$), and $\Big(\Omega, {\mathbb F}, \{ {\mathbb F}_{0,t}: t \in [0,T]\}\Big)$ are defined by (\ref{gf})-(\ref{GF}). \\
Then we introduce the  $\Big(\{ {\mathbb F}_{0,t}: t \in [0,T]\}, \overline{{\mathbb P}}\Big)$-exponential martingale  defined by  
\begin{align}
&\Upsilon^{u}(t) \tri  \exp \Big\{ \int_{0}^t f^{*}(s,x(s),u_s) \Big(G(s)G^*(s)\Big)^{-1} dx(s)  \nonumber \\
&-\frac{1}{2}  \int_{0}^t f^{*}(s,x(s)) \Big(G(s)G^*(s)\Big)^{-1} f(s,x(s))ds \Big\},  \hst t \in [0,T]. \label{path2}
\end{align}
Therefore,  we define the reference measure ${\mathbb P}$ on  $\Big(\Omega, \{ {\mathbb F}_{0,t}: t \in [0,T]\}\Big)$ by setting
\begin{align}
\frac{ d {\mathbb P}}{ d {\overline{\mathbb P}}} \Big{|}_{ {\mathbb F}_T} = \Upsilon^u(T)
 \label{path1}
\end{align} 
Then, under the reference probability space $\Big(\Omega, {\mathbb F}, \Big( \{{\mathbb F}_{0,t}: t \in [0,T]\}, {\mathbb P}\Big)$, the process $\{W^u(t): t \in [0,T]\}$ is a Brownian motion  defined by 
\bes
dW^u(t)= G^{-1}(t)\Big( dx(t)-f(t,x(t),u_t)dt\Big).
\ees
Thus,   $\{x(t): t \in [0,T]\}$ is a weak solution, which is pathwise unique,  and $\{ {\mathbb F}_{0,t}: t \in [0,T]\}-$adapted $C([0,T], {\mathbb R}^n), {\mathbb P}-$a.s., satisfying
\bea
dx(t)=f(t,x(t), u_t)dt + G(t) dW^u(t), \hso x(0)=x_0, \hso t \in (0,T], \label{path22}
\eea
and  $\{y(t): t \in [0,T]\}$ is given by (\ref{path4}). 

Next, we introduce the $\Big(\{ {\mathbb F}_{0,t}: t \in [0,T]\}, {{\mathbb P}}\Big)$-exponential martingale $\{\Lambda^u(t): t \in [0,T]\}$  given by (\ref{pi8}), and we define the original measure ${\mathbb P}^u$ on  $\Big(\Omega, \{ {\mathbb F}_{0,t}: t \in [0,T]\}\Big)$ by 
\bea
d {\mathbb P}^u(\omega)=   \Lambda^u(T) d{\mathbb P}(\omega)= \Lambda^u(T)\: \Upsilon^u(T)  d  \overline{\mathbb P}(\omega)       . \label{path3}
\eea     
Thus,  under the probability space $\Big(\Omega, {\mathbb  F},\{ {\mathbb F}_{0,t}: t \in [0,T]\}, {\mathbb P}^u\Big)$, the processes $\{(x(t), y(t)): t\in [0,T]\}$ are solutions of (\ref{path22}) and  (\ref{pi12}).

Regarding the pay-off, given a $u \in {\mathbb U}_{rel}^{(N)}[0,T]$, the team game pay-off under the  probability space $\Big(\Omega, {\mathbb F}, \{ {\mathbb F}_{0,t}: t \in [0,T]\}, \overline{{\mathbb P}}\Big)$ is defined  by 
 \begin{align} 
  J(u) \tri
    \overline{\mathbb E} \biggl\{   \int_{0}^{T}   \Lambda^u(t) \times \Upsilon^u(t) \ell(t,x(t),u_t) dt +  \Lambda^u(T) \times \Upsilon^u(T) \varphi(x(T)\biggr\}, \label{path5}
  \end{align} 
where $\overline{\mathbb E}\{\cdot\}$ denotes expectation with respect to the product measure $\overline{\mathbb P}(d\xi,dw,dy)\tri \Pi_{0}(d\xi)\times {\cal W}_{\widehat{W}}(dw) \times  {\cal W}_y(dy)$, where ${\cal W}_{\widehat{W}}(\cdot)$ is the Wiener measure on the sample paths  $\{\widehat{W}(t): t \in [0,T]\} \in C([0,T], {\mathbb R}^n)$, and ${\cal W}_y(\cdot)$ is the Wiener measure of the sample paths $\{y(t): t \in [0,T]\} \in C([0,T], {\mathbb R}^k)$ of Brownian motion $y(t)=\int_{0}^t D^{\frac{1}{2}}(s)dB(s), t \in [0,T]$, defined by (\ref{path4}).\\
Moreover, by using (\ref{path3}) under the  probability space $\Big(\Omega, {\mathbb F}, \{ {\mathbb F}_{0,t}: t \in [0,T]\}, {{\mathbb P}^u}\Big)$ the pay-off (\ref{path5}) reduced to the pay-off  (\ref{pi13}). 

Hence, we have shown that  the stochastic differential decision problem with $\{(x(t), y(t)): t \in [0,T]\}$    satisfying (\ref{path22}), (\ref{pi12}), and pay-off (\ref{pi13}), is equivalent to the decision problem with $\{(x(t), y(t)): t \in [0,T]\}$ independent Brownian motions, independent of any of the team decisions, satisfying  (\ref{path4}), and  pay-off (\ref{path5}), and  hence the transformed problem is  ``Static" in  Witsenhausen's \cite{witsenhausen1988}  terminology. \\

Now, we consider the equivalent transformed pay-off (\ref{path5}), and we integrate by parts the stochastic integral terms appearing in $\Lambda^u(\cdot), \Upsilon^u(\cdot)$, and then  we define
\bea
\overline{\ell}(t,\xi,\widehat{W},y, u)\tri  \Lambda^u \times \Upsilon^u \ell(t,x,u), \hst \overline{\varphi}(T,\xi,\widehat{W}, y,u) \tri \Lambda^u \times \Upsilon^u \varphi(x), \hso t \in [0,T]. \label{path6}
\eea
Then the transformed pay-off (\ref{path5})  is given by
\begin{align} 
  J(u) \tri&
    \int_{{\mathbb R}^n\times C([0,T], {\mathbb R}^n) \times C([0,T], {\mathbb R}^k)} \biggl\{   \int_{0}^{T}   \overline{\ell}(t,\xi,\{\widehat{W}(s),y(s),u_s: 0\leq s \leq t\}) dt \nonumber \\
  &  +  \overline{\varphi}(T,\xi,\{\widehat{W}(t),y(t), u_t: 0 \leq t \leq T\}) \biggr\}\Pi_0(d\xi) \times  {\cal W}_{\widehat{W}}(d\widehat{W}) \times  {\cal W}_y(dy)   \label{path7}\\
  =&
    \int_{ C([0,T], {\mathbb R}^k)} \biggl\{   \int_{0}^{T}   V(t,\{y(s),u_s: 0\leq s \leq t\}) dt \nonumber \\
  &  +  N(T,\{y(t),u_t: 0 \leq t \leq T\}) \biggr\}   {\cal W}_y(dy),   \label{path8}
  \end{align} 
where (\ref{path8}) follows by Fubini's theorem and by performing the integration with respect to the product measure $\Pi_{0}(d\xi)\times {\cal W}_{\widehat{W}}(d\widehat{W})$. 
Note that the equivalent pay-off (\ref{path8}) is a function space integral with respect to a Wiener measure; such function space integration is found in \cite{benes1981,charalambous-elliott1998}, where new nonlinear finite-dimensional optimal, in the mean-square sense, filters are derived. Moreover, expression (\ref{path8}) is precisely the continuous-time generalization of Witsenhausen's  main theorem [Theorem~6.1, \cite{witsenhausen1988}], which is easily verified by comparing (\ref{path8}) and [equation (6.4),  \cite{witsenhausen1988}].  In fact, $\Upsilon^u(\cdot). \Lambda^u(\cdot)$ appearing in (\ref{path3}) is the common denominator condition, and the representation of the pay-off as a functional of $\omega(t)=(x(0), \widehat{W}(t), y(t)), t \in [0,T]$, where $y(\cdot)$ is Brownian motion is the change of variables,  in Witsenhausen's \cite{witsenhausen1988} terminology.   Therefore,  as stated in  Section~\ref{introduction}, applying the static team theory optimization to (\ref{path8}), as in \cite{krainak-speyer-marcus1982a}, to derive team and PbP optimality conditions  will only be tractable for a very small class of models. One way to compute the optimal team  strategies via  a tractable procedure is to write the differential equations of $\{\Lambda^u(t), \Upsilon^u(t): t \in [0,T]\}$ and treat them as states of the pay-off (\ref{path5}). However, this is a stochastic dynamic optimization problem, leading to stochastic Pontryagin's maximum principle, and hence it can not be dealt with via static team theory.  \\

\noi{\bf Discrete-Time Stochastic Dynamic Team Problems.}

Next,  we consider a general discrete-time stochastic dynamic  system operating over the time period $\{0, \ldots, T\}$, with observations collected at each time by $N$ observation posts, each serving one  of the $N$ control station.   Let ${\mathbb N}_0^T \tri \{0, 1, 2, \ldots, T\}$, ${\mathbb N}_1^T \tri \{1, 2, \ldots, T\}$ denote discrete  time-index set.

 We start with  a reference probability space $\Big(\Omega, {\mathbb F}, \{ {\mathbb F}_{0,t}: t \in {\mathbb N}_0^T\}, \overline{{\mathbb P}}\Big)$, under which $\{(x(t), y^m(t)): t \in {\mathbb Z}_0^T\}$ are  sequences of independent RVs, with $x(0)$ having distribution $\Pi_0(dx)$, $\Big\{x(t) \sim \zeta_t (\cdot) \tri \mbox{Gaussian}(0, I_{n\times n}): t \in {\mathbb N}_1^T\Big\}$, and $\Big\{y^m(t) \sim \lambda_t^m(\cdot) \tri \mbox{Gaussian}(0, I_{k_m\times k_m}):  t \in {\mathbb N}_0^T\Big\}$, for $m=1, \ldots, N$. Thus, $y^m(t)$ is the observation output at the $m$th observation  post at time $t\in {\mathbb N}_0^T$, for $ m=1, \ldots, N$.\\
  Let   $\{{\mathbb F}_{0,t}: t \in {\mathbb N}_0^T\}$ denote the  filtration generated by the completion of the $\sigma-$algebra $\sigma\{x(\tau), y^1(\tau), \ldots, y^N(\tau): \tau \leq t\}, t \in {\mathbb N}_0^T$, and  $\{{\cal G}_{0,t}^{y^m}: t \in {\mathbb N}_0^T\}$ the filtration generated by completion of the  $\sigma-$algebra  $\sigma\{y^m(\tau): \tau \leq t\}, t \in {\mathbb N}_0^T,$ for $ m=1, \ldots, N$. Similarly, we define by $\{{\cal G}_{0,t}^{y}: t \in {\mathbb N}_0^T\}$ the filtration generated by completion of the minimum   $\sigma-$algebra  $\bigvee_{m=1}^N {\cal G}_{0,t}^{y^m},  t \in {\mathbb N}_0^T$.
  
Next, we specify the information structures available as arguments of the control laws, following the definitions given in  \cite{witsenhausen1971}.  For each $t\in \{0,1, \ldots, T\}$, let ${\cal Y}_t \tri \Big\{(\tau,m)\in \{0,1, \ldots, t\} \times \{1, 2, \ldots, N\}\Big\}$. A data basis at time $t \in \{0, 1, \ldots, T\}$ is a subset $A \subseteq {\cal Y}_t$, while ${\cal Y}_t$ is the maximal data basis at time $t$. The array of vectors specified by $A$ is denoted by $y_A$, where  $y_A \tri \Big\{ y^m(t): (t,m) \in A \Big\}$. \\
An information structure is the  assignment to each $(t, k) \in {\cal Y}_T$ of a data basis at time $t$, denoted by ${\cal Y}_{t,k}$. The interpretation is that the control applied by the $k$th station at time $t$ is based on $\Big\{y^\mu(\tau): (\tau, \mu) \in {\cal Y}_{t, k}\Big\}$. Thus, the argument of control applied by the $k$th station at time $t$ is  $\{ y^\mu(\tau): (\tau, \mu) \in {\cal Y}_{t,k}\}$ (in \cite{witsenhausen1971} the information structure includes past controls as well).  

 Thus, for a given Borel measurable  mapping $\gamma_t^k(\cdot)$,   the control actions at the $k$th control station are generated   by
 \bea
u^k(t) = \gamma_t^k\Big(   \{ y^\mu(\tau): (\tau, \mu) \in {\cal Y}_{t,k}      \Big), \hso t\in \{0,1 \ldots, T\}, \hso  k=1 \ldots, N. \label{ind3}
\eea  
 That is,  $\gamma_t^i(\cdot)$ is the control law or strategy at time $t$, which generates the control actions  applied by the $i$th station, while its  argument is the information structure. \\
Given   an information structure, control station $k$  is said to have ``Perfect Recall"  if  for $t=0, \ldots, T-1$, ${\cal Y}_{t,k} \subseteq {\cal Y}_{t+1, k}$.    Note that perfect recall means that a station that at some time has available certain  information will have available the same information at any subsequent times. \\
An information structure is called ``Classical" if the following two conditions hold: i) all stations receive the same information (i.e. the information structures are independent of $k$), ii) all stations have perfect recall. Thus, classical information structure implies that the   $\sigma-$algebras generated by the information structres at each control station over  successive times are nested (i.e. these generate filtrations). 

Given the information structures, we denote the set of admissible strategies at the $k$th control station  at time $t \in {\mathbb N}_0^T$, by $\gamma_t^k(\cdot) \in {\mathbb U}^k[t]$,  their $(T+1)-$tuple by $\gamma_{[0,T]}^k (\cdot)\tri (\gamma_0^k(\cdot), \ldots, \gamma_{T}^k(\cdot)) \in {\mathbb U}^k[0,T] \tri \times_{t=0}^{T} {\mathbb U}^k[t]$, and  $\gamma_{[0,T]}(\cdot) \tri (\gamma_{[0,T]}^1, \ldots, \gamma_{[0,T]}^N(\cdot) \in {\mathbb U}^{(N)}[0,T] \tri \times_{k=1}^N {\mathbb U}^k[0,T]$.
 
   Write  $\lambda_t(\cdot)= \prod_{m=1}^N \lambda_{t}^m(\cdot), t \in {\mathbb N}_0^T$, and $y \tri Vector\{y^1, \ldots, y^N\},   h \tri Vector\{h^1, \ldots, h^M\},      D^{\frac{1}{2}} \tri diag\{D^{1,\frac{1}{2}}, \ldots, D^{N, \frac{1}{2}}\}$.
Consider the following measurable functions.
\begin{align}
f(t, \cdot, \cdot):& \times_{\tau=0}^t ({\mathbb R}^n) \times_{\tau=1, k=1}^{t,N}( {\mathbb A}_\tau^k) \longrightarrow {\mathbb R}^n,    \hso t \in {\mathbb N}_0^{T-1} \nonumber \\
 h^m(t, \cdot, \cdot):& {\mathbb R}^n \times_{\tau=1, k=1}^{t,N}( {\mathbb A}_\tau^k)  \longrightarrow {\mathbb R}^{k_m}, \hso  t \in {\mathbb N}_0^T, \hso m=1, \ldots, N. \nonumber 
\end{align}
For any admissible  strategy $u^k(t) \equiv \gamma_t^k(  \{ y^\mu(\tau): (\tau, \mu) \in {\cal Y}_{t,k}   \},  k=1, \ldots, N, t \in {\mathbb N}_0^T$, we define the following quantity ($u\equiv (u^1, \ldots, u^N)$).

\begin{align}
\Theta_{0,t}^u\tri  &  \prod_{\tau=1}^{t}    \frac{\zeta_{\tau}( G^{-1}(\tau-1)(   x(\tau)-f(\tau-1,x(0), \ldots, x(\tau-1),u^1(\tau-1), \ldots, u^N(\tau-1))))}{ |G(\tau-1)|  \zeta_{\tau}(x(\tau))}    \nonumber \\
& .\frac{\lambda_{\tau}(D^{-\frac{1}{2}}(\tau)(y(\tau)-h(\tau,x(\tau),u^1(\tau), \ldots, u^N(\tau))))}{|D^{\frac{1}{2}}(\tau)|\lambda_{\tau}(y(\tau))}, \hso  t  \in {\mathbb N}_1^T. \label{dt1} \\
\Theta_{0,0}^u \tri & \frac{\lambda_{0}( D^{-\frac{1}{2}}(0)(y(0)-h(0,x(0),u^1(0), \ldots, u^N(0))))}{|D^{\frac{1}{2}}(0)|\lambda_{0}(y(0))}. \nonumber 
\end{align}
Under the reference probability space $\Big(\Omega, {\mathbb F}, \{ {\mathbb F}_{0,t}: t \in {\mathbb N}_0^T\}, \overline{{\mathbb P}}\Big)$ the discrete-time team pay-off is defined by 
\begin{align}
J(u^1, \ldots, u^N) \tri & \overline{\mathbb E} \Big\{  \Theta_{0,T}^u(x(0),u^1(0),\ldots, u^N(0), y(0),\ldots, x(T), u^1(T), \ldots, u^N(T), y(T)) \nonumber \\
&. \Big(\sum_{t=0}^{T-1} \ell(t,x(t),u^1(t), \ldots, u^N(t)) + \varphi(x(T)) \Big) \Big\} \label{dt6} \\
=& \int \Big\{\Theta_{0,T}^u(x(0),u^1(0),\ldots, u^N(0), y(0),\ldots, x(T), u^1(T),\ldots, u^N(T), y(T))  \nonumber \\
&.\Big(\sum_{t=1}^{T-1} \ell(t,x(t),u^1(t), \ldots, u^N(t)) + \varphi(x(T)) \Big) \Big\}\nonumber \\
& \Pi_{0}(dx(0)) .\lambda_{0}(y(0)).\prod_{t=1}^{T} \zeta_{t}(x(t)) .\lambda_{t}(y(t))dx(t) .dy(t). \label{dt7} \\
=& \int L(u^1(0), \ldots, u^N(T), \ldots, u^1(T), \ldots, u^N(T), x(0), \ldots, x(T), y(0), \ldots, y(T)) \nonumber \\
& \Pi_{0}(dx(0)) .\lambda_{0}(y(0)).\prod_{t=1}^{T} \zeta_{t}(x(t)) .\lambda_{t}(y(t))dx(t) .dy(t). \label{dt7a}
\end{align}
The team problem is defined by
\bea
\inf \Big\{  J(\gamma_{[0,T]}): \gamma_{[0,T]}  \in {\mathbb U}^{(N)}[0,T]\Big\}. \label{stp1}
\eea
 This is the transformed equivalent stochastic team problem.  The initial  stochastic dynamic team problem is introduced as follows.  

It can be shown that $\{\Theta_{0,t}^u: t \in {\mathbb N}_0^T\}$ is an $\Big(\Omega, {\mathbb F}, \{ {\mathbb F}_{0,t}: t \in {\mathbb N}_0^T\}, \overline{{\mathbb P}}\Big)-$martingale, and hence $\int  \Theta_{0,t}^u(\omega) d\overline{\mathbb P}(\omega)=1$. Therefore, we can define a  probability measure ${\mathbb P}^u$ on  $\Big(\Omega, \{ {\mathbb F}_{0,t}: t \in {\mathbb N}_0^T\}\Big)$ by setting
\begin{align}
\frac{ d {\mathbb P}^u}{ d {\overline{\mathbb P}}} \Big{|}_{ {\mathbb F}_{0,t}} = \Theta_{0,t}, \hst t \in {\mathbb N}_0^T
 \label{dt2}
\end{align} 
Then under this  probability measure ${\mathbb P}^u$, the processes defined by
\begin{align}
&w^u(t) \tri  G^{-1}(t-1)\Big(x(t) - f(t-1,x(0), x(1), \ldots, x(t-1),u^1(t-1), \ldots, u^N(t-1))\Big), \hso t \in {\mathbb N}_1^T,  \label{dt3s} \\
 &b^{m,u}(t) \tri  D^{m, -\frac{1}{2}}(t)\Big(y^m(t)-h^m(t,x(t),u^1(t), \ldots, u^N(t))\Big),\hso  t \in {\mathbb N}_0^T, \hso m=1, \ldots, N, \label{dt3}
\end{align}
are two sequences of independent normally distributed RVs with densities, $\zeta_t(\cdot), t \in {\mathbb N}_1^T$, and $\lambda_t^m(\cdot), t \in {\mathbb N}_0^T, m=1, \ldots, N$, respectively.\\ 
Therefore, under the probability space $\Big(\Omega, {\mathbb F}, \{ {\mathbb F}_{0,t}: t \in {\mathbb N}_0^T\}, {{\mathbb P}^u}\Big)$, the discrete-time stochastic team problem has state and observations given by  
\begin{align}
 x(t+1) =& f(t,x(0), x(1), \ldots, x(t),u^1(t), \ldots, u^N(t))+G(t) w^u(t+1), \; x(0)=x_0, \; t \in {\mathbb N}_1^{T-1}, \label{dt4} \\
 y^m(t)= & h^m(t,x(t),u^1(t), \ldots, u^N(t))+D^{m, \frac{1}{2}}(t) b^{m,u}(t), \hso t \in {\mathbb N}_0^T, \hso m=1, \ldots, N. \label{dt5} 
\end{align}
Thus, for any admissible  team strategy $u \equiv \gamma_{[0,T-1]} \in {\mathbb U}^{(N)}[0,T]$, under measure ${\mathbb P}^u$ the team pay-off is 
\begin{align}
J(u)=& {\mathbb E}^u \Big\{ \sum_{t=0}^{T-1} \ell(t,x(t),u^1(t), \ldots, u^N(t)) + \varphi(x(T))\Big\}, \hso \inf \Big\{  J(\gamma_{[0,T]}): \gamma_{[0,T]}  \in {\mathbb U}^{(N)}[0,T]\Big\} . \label{dt6} 
\end{align}
Therefore, we have shown that the dynamic team problem (\ref{dt4}), (\ref{dt5}) with pay-off (\ref{dt6}) can be transformed to the equivalent problem (static problem in the terminology of Witsenhausen \cite{witsenhausen1988}) with pay-off (\ref{dt7}), where the sequences $\{x(t), y(t): t \in {\mathbb N}_0^T\}$ are independent, sequences, distributed  according to  $x(0) \sim \Pi_0(dx), x(t) \sim \zeta_t(\cdot), t \in {\mathbb N}_1^T, y^m(t) \sim \lambda_t^m(\cdot), t \in {\mathbb N}_0^T, m=1, \ldots, N$.\\
 Consequently, we conclude that  under appropriate conditions (for example those in \cite{radner1962,krainak-speyer-marcus1982a}), we can apply static team  optimality conditions.  Indeed, we can invoke any of Theorems  found in \cite{krainak-speyer-marcus1982a}. We illustrate this point by stating one such  theorem from  \cite{krainak-speyer-marcus1982a}, when all control stations do not have perfect recall  (the rest are also  applicable). 

\begin{theorem}
\label{st-ksm}
Suppose none of the control stations have perfect recall.  Assume the following conditions hold.\\
 
 {\bf (S1)} $L: \times_{t=0, k=0}^{T, N} ({\mathbb A}_t^k) \times \times_{t=0}^{T}  ({\mathbb R}^n )\longrightarrow {\mathbb R}$ is Borel measurable;\\
 
 {\bf (S2)} $L(\cdot, x(0), \ldots, x(T), y(0), \ldots, y(T))$ is convex and differentiable uniformly in\\ $(x(0),y(0), \ldots, x(T), y(T))\in  \times_{i=0}^{T} ({\mathbb R}^{n+k})$; \\
 
 {\bf (S3)} $\inf \Big\{  J(\gamma_{[0,T]}): \gamma_{[0,T]}  \in {\mathbb U}^{(N)}[0,T]\Big\}> -\infty$; \\
 
 {\bf (S4)} There exists a $\gamma_{[0,T]}^o  \in {\mathbb U}^{(N)}[0,T]$ such that  $J(\gamma_{[0,T]}^o)< \infty$;\\
 
 {\bf (S5)} For all $\gamma_{[0,T]}  \in {\mathbb U}^{(N)}[0,T]$ such that  $J(\gamma_{[0,T]})< \infty$, the following holds
 \begin{align}
\overline{\mathbb E} \Big\{ &\sum_{k=1, t=0}^{N, T} L_{u^k(t)}(\gamma_{[0, T]}^o, x(0), \ldots, x(T), y(0), \ldots, y(T)) \nonumber \\
 &.\Big( \gamma_t^k-\gamma_t^{k,o}\Big) \Big\} \geq 0, \hso \forall  \gamma_{[0,T]}  \in {\mathbb U}^{(N)}[0,T].  \label{vaeqst}
 \end{align}

Then  $\gamma_{[0,T]}^o  \in {\mathbb U}^{(N)}[0,T]$ is a team optimal strategy. If in addition, \\$L(\cdot, x(0), \ldots, x(T), y(0), \ldots, y(T))$ is strictly convex, $\overline{\mathbb P}-$a.s., then $\gamma_{[0,T]}^o  \in {\mathbb U}^{(N)}[0,T]$ is unique $\overline{\mathbb P}-$a.s.
\end{theorem}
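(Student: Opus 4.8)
The plan is to leverage the static representation (\ref{dt7a}) of the pay-off together with the convexity hypothesis (S2), so that the first-order variational inequality (S5) upgrades to global team optimality by a single integration against the reference product measure. Since under $\overline{\mathbb P}$ the variables $\{x(t),y(t)\}$ are mutually independent and the strategies enter only through the argument $L$, the problem has been reduced to a classical Radner-type convex static team, and the proof is essentially the gradient-inequality argument for convex functionals.

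First I would fix a realization $(x,y)\equiv (x(0),\ldots,x(T),y(0),\ldots,y(T))$ and invoke (S2): for $\overline{\mathbb P}$-almost every such realization the map from the control values into $L(\cdot,x,y)$ is convex and differentiable on the (convex) product of action sets $\times_{t=0,k=1}^{T,N}({\mathbb A}_t^k)$. The gradient inequality for a differentiable convex function then yields, pointwise and for every admissible $\gamma_{[0,T]},\gamma_{[0,T]}^o\in{\mathbb U}^{(N)}[0,T]$,
\[
L(\gamma_{[0,T]},x,y)-L(\gamma_{[0,T]}^o,x,y)\;\geq\;\sum_{k=1,\,t=0}^{N,\,T} L_{u^k(t)}(\gamma_{[0,T]}^o,x,y)\,\big(\gamma_t^k-\gamma_t^{k,o}\big).
\]
Measurability of both sides is guaranteed by (S1), so each term is a legitimate integrand against $\overline{\mathbb P}$.

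Next I would integrate this inequality with respect to the product measure $\Pi_0(dx(0))\,\lambda_0(y(0))\,\prod_{t=1}^{T}\zeta_t(x(t))\,\lambda_t(y(t))\,dx(t)\,dy(t)$. By the representation (\ref{dt7a}) the left-hand side integrates to $J(\gamma_{[0,T]})-J(\gamma_{[0,T]}^o)$, while the right-hand side is exactly the expectation in (\ref{vaeqst}), which is $\geq 0$ by (S5). Hence $J(\gamma_{[0,T]})\geq J(\gamma_{[0,T]}^o)$ for every $\gamma_{[0,T]}$ with $J(\gamma_{[0,T]})<\infty$; the case $J(\gamma_{[0,T]})=+\infty$ is immediate because $J(\gamma_{[0,T]}^o)<\infty$ by (S4). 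Combined with the boundedness of the infimum in (S3), this shows $\gamma_{[0,T]}^o$ attains the infimum in (\ref{stp1}) and is therefore team optimal. For the uniqueness claim, suppose $\tilde\gamma_{[0,T]}$ is also team optimal, so $J(\tilde\gamma_{[0,T]})=J(\gamma_{[0,T]}^o)$; then the integrated inequality holds with equality, forcing the pointwise gradient inequality to be an equality $\overline{\mathbb P}$-a.s., and strict convexity of $L(\cdot,x,y)$ makes that inequality strict wherever $\tilde\gamma_{[0,T]}\neq\gamma_{[0,T]}^o$, so $\tilde\gamma_{[0,T]}=\gamma_{[0,T]}^o$ $\overline{\mathbb P}$-a.s.

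The main obstacle I anticipate is the integrability required to pass from the pointwise gradient inequality to its integrated form and to identify the right-hand side with the sum in (\ref{vaeqst}): one must verify that each product $L_{u^k(t)}(\gamma_{[0,T]}^o,x,y)\,(\gamma_t^k-\gamma_t^{k,o})$ is $\overline{\mathbb P}$-integrable. This is where boundedness of the action sets ${\mathbb A}_t^k$, the finite-cost reference strategy from (S4), and a growth bound on $L$ and its control-gradient are used; once these are in place the interchange is justified by dominated convergence or Fubini. I would also emphasize that the no-perfect-recall hypothesis keeps each $\gamma_t^k-\gamma_t^{k,o}$ a measurable function of the fixed data basis ${\cal Y}_{t,k}$, so there is no filtration nesting to account for and the argument remains within the purely static convex team framework, exactly as in \cite{krainak-speyer-marcus1982a}.
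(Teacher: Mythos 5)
Your argument is correct, and it is worth noting how it relates to what the paper actually does: the paper's proof of this theorem is a two-line reduction --- observe that under $\overline{\mathbb P}$ the data bases are unaffected by the decisions, so the transformed pay-off (\ref{dt7a}) is a static convex team problem, and then cite [Theorem~2, \cite{krainak-speyer-marcus1982a}] as a black box. You instead open that black box and reprove the static result directly: the pointwise gradient inequality from (S2), integration against the reference product measure to turn the left side into $J(\gamma_{[0,T]})-J(\gamma_{[0,T]}^o)$ via (\ref{dt7a}) and the right side into the stationarity expression (\ref{vaeqst}), and strict convexity for uniqueness. The paper's route is shorter and delegates the delicate integrability bookkeeping (ensuring each product $L_{u^k(t)}(\gamma_{[0,T]}^o,x,y)(\gamma_t^k-\gamma_t^{k,o})$ is $\overline{\mathbb P}$-integrable so that the expectation of the sum equals the sum of expectations) to the cited reference, which was written precisely to handle those issues; your route is self-contained and makes transparent exactly where convexity, differentiability, and the decision-independence of the information under $\overline{\mathbb P}$ enter, at the cost of having to supply those integrability verifications yourself --- which you correctly flag as the one remaining obstacle rather than fully discharge. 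Your observation that the no-perfect-recall hypothesis keeps each $\gamma_t^k$ measurable with respect to a fixed data basis, so the problem is genuinely static with $(T+1)N$ independent decision variables, is the same structural point the paper's proof rests on.
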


\begin{proof} Since none of the control stations have perfect recall, and $(t, k) \in \{0, \ldots, T\} \times \{1, \ldots, N\}$, there are $(T+1)N$ policies. Since, under the reference measure $\overline{\mathbb P}$ the information structures are independent of any of the team decisions, then under the reference measure  the problem is equivalent to a static team problem. By the above construction, we can apply static team theory to the equivalent pay-off, i.e. [Theorem~2, \cite{krainak-speyer-marcus1982a}] is applicable.
\end{proof}

 \ \
 
Notice that by  using the fact that  Radon-Nykodym derivative $\Theta_{0,t}^u(\cdot), \forall t \in {\mathbb N}_0^T$ is an exponential function then we can expressed (\ref{vaeqst}) under the initial probability measure ${\mathbb P}^u$. Moreover,  it we can show  that (\ref{vaeqst}) implies the following component wise conditional variational inequalities.
 \begin{align}
 \overline{\mathbb E} \Big\{ &L_{u^k(t)}(u^{o}(0), \ldots, u^{o}(T),  x(0),  \ldots, x(T), y(0), \ldots, y(T)) |\{ y^m(\tau): (\tau, \mu) \in {\cal Y}_{t,k} \}  \Big\}\nonumber \\
 &.\Big( u^k(t)-u^{k,o}(t)\Big)  \geq 0, \hso \forall  u^k(t) \in {\mathbb A}_t^{k}, \hso
\overline{\mathbb P}-a.s., k=1, \ldots, N, t=0, \ldots, T.  \label{vaeqst1}
 \end{align}
Furthermore, (\ref{vaeqst1}) can be expressed under the original probability measure ${\mathbb P}^{u^o}$ as well. \\
 
Therefore, for discrete-time team problems we have extended  Witsenshausen's equivalence of static and dynamic team problems, to observations which are functions of an unobserved process with memory (rather than a RV), and we have identified the ``Common Denominator Condition and Change of Variables". Although, the above method appears feasible, to compute  the optimal team strategies using this procedure is  expected to be computational intensive for large number of decision stages $T$ and large number of control stations $N$. This is the reason why we pursue the second method based on stochastic Pontryagin's maximum principle.

Next, we apply the static team optimality conditions to  the Witsenhausen counter example described in \cite{witsenhausen1968}. This is 
a two-stage stochastic control problem is described by the following equations and pay-off.\\

\noi{\it State Equations:}
\begin{align}
x_1=x_0+u_1, \hst x_2 =x_1-u_2, \hst x_0 \sim p_0(\cdot) \label{w1}
\end{align}

\noi{\it Output Equations:}
\begin{align}
y_0 =x_0, \hst
y_1 =x_1 +v, \hst v \sim \lambda_v(\cdot) \label{w2}
\end{align}

\noi{\it Admissible Strategies:}
\begin{align}
u_1 = \gamma_1(y_0),  \hst
u_2 = \gamma_2(y_1) \label{w4}
\end{align}

\noi{\it Pay-Off:}
\bea
J(\gamma_1^*, \gamma_2^*) \tri \inf_{(\gamma_1(y_0), \gamma_2(y_1))}                     J(\gamma_1,\gamma_2),  \hst  J(u_1,u_2) \tri {\mathbb E}^{u_1, u_2} \Big\{ k^2 \big(u_1\big)^2 + \big(x_2 \big)^2 \Big\},  \label{w3}
\eea  
where $(\gamma_1,\gamma_2)$ are measurable functions, $x_0$ is a Random Variable with known probability density function $p_0(\cdot)$,  and $v$ is a Random noise term with a known probability density function $\lambda_v(\cdot)$, both having zero mean and finite second moments, and $x_0$ independent of $v$.\\
 The objective is to  find $(\gamma_1^*, \gamma_2^*)$ which minimizes $J(\gamma_1,\gamma_2)$.  The information pattern is nonclassical since  $y_0$ is known to the control $u_1$ at the first stage but it is not known to the control $u_2$ at the second stage.   This prolem remains  unsolved since 1968, although several paper have been written on  it.\\
Under the reference probability measure ${\mathbb P}$, in which  $y_1$ is distributed according to $\lambda_v(\cdot)$,  the equivalent pay-off is
\begin{align}
J(u_0, u_1) ={\mathbb E}\Big\{  \frac{\lambda_v(y_1-x_0-u_1)}{\lambda_v(y_1)} \Big(k^2  (u_1)^2 + (x_0+u_1-u_2)^2\Big) \Big\}. \label{w4}
\end{align}
Applying static team theory, we have 

\begin{align}
\frac{\partial}{\partial u_1} {\mathbb E}\Big\{  \frac{\lambda_v(y_1-x_0-u_1)}{\lambda_v(y_1)} &\Big(k^2  (u_1)^2 + (x_0+u_1-u_2^*)^2\Big) | y_0 \Big\}|_{u_1=u_1^*}=0,  \label{w6} \\
\frac{\partial}{\partial u_2} {\mathbb E}\Big\{  \frac{\lambda_v(y_1-x_0-u_1^*)}{\lambda_v(y_1)} &\Big(k^2  (u_1^*)^2 + (x_0+u_1^*-u_2)^2\Big) | y_1 \Big\}|_{u_2=u_2^*}=0. \label{w7}
\end{align}
Suppose $v$ is Gaussian distribited $N(0,1)$. Then, we deduce, after elementary calculations, that under the initial probability measure ${\mathbb P}^{u_1, u_2}$, the optimal control strategies $(\gamma_1^*, \gamma_2^*)$ are given by the following expressions.
\begin{align}
\gamma_1^*(y_0) =& - \frac{1}{2 k^2}  {\mathbb E}^{\gamma_1^*, \gamma_2^*} \Big\{  \Big(y_1-x_1 \Big)\Big(x_1- \gamma_2^*(y_1)\Big)^2    | y_0 \Big\}  -   \frac{1}{ k^2}  {\mathbb E}^{\gamma_1^*, \gamma_2^*} \Big\{ x_1 - \gamma_2^*(y_1)    | y_0 \Big\} \label{w8} \\
=&-\frac{1}{2 k^2}  {\mathbb E}^{\gamma_1^*, \gamma_2^*} \Big\{  \Big(y_1-x_0-\gamma_1^*(y_0)\Big) \Big(x_0+ \gamma_1^*(y_0) - \gamma_2^*(y_1)\Big)^2    | y_0 \Big\}  \nonumber \\
&-   \frac{1}{ k^2}  {\mathbb E}^{\gamma_1^*, \gamma_2^* } \Big\{x_0 + \gamma_1^*(y_0) - \gamma_2(y_1)    | y_0 \Big\},  \label{w9} \\
\gamma_2^*(y_1)=&  {\mathbb E}^{\gamma_1^*} \Big\{ x_1    | y_1 \Big\} \label{w10} \\
=& {\mathbb E}^{\gamma_1^*} \Big\{ x_0     | y_1 \Big\}
 +  {\mathbb E}^{\gamma_1^*} \Big\{ \gamma_1^*(y_0)     | y_1 \Big\}. \label{w11}
 \end{align}
These are the equations which give the optimal two-stage decision strategies. One may proceed further to compute the conditional density $p_{x_1|y_1}(x_1|y_1)$, and then substitute (\ref{w10}) into (\ref{w9}) to obtain a nonlinear equation in terms of $\gamma_1^*(y_0)$.

   \section{Existence of Relaxed Team  Optimal Strategies}
   \label{dif-sem}
  In this section we consider the augmented systems of previous section,  and we   show continuous dependent of the soloutions  on $u$. This property is   required when we invoke   the semi martingale representation for Hilbert space processes  to obtain the variational equation of the augmented system.   Moreover, we introduce additional assumptions on $\ell, \varphi$ and we also show existence of team and PbP optimal strategies.

Let $B_{{\mathbb F}_T}^{\infty}([0,T], L^2(\Omega,{\mathbb R}^{n+1}))$ denote the space of $\{{\mathbb F}_{0,t}: t\in [0,T]\}$-adapted ${\mathbb R}^{n+1}-$ valued second order random processes endowed with the norm topology  $\parallel  \cdot \parallel$ defined by  
\bes
 \parallel X\parallel^2  \tri \sup_{t \in [0,T]}  {\mathbb E}|X(t)|_{{\mathbb R}^{n+1}}^2.
 \ees

\noi  Next, we  show  existence of solutions and their continuous dependence on $u$. 

\begin{comment}

\vspace*{1.0cm}

\CDC{Bambos: Can you check this Lemma? We discussed it in the past. My concern is that this is for bounded $h$. Can we generalize it to linear growth?} 

\end{comment}

\begin{lemma}(Existence and Continuous Dependence of Solutions)\\
\label{lemma3.1}
 Suppose Assumptions~\ref{A1-A4}, {\bf (A0)-(A4), (A5'), (A6)-(A8)}  hold. Then for any ${\mathbb  F}_{0,0}$-measurable initial state $x_0$ having finite second moment, and any $u \in {\mathbb U}_{rel}^{(N)}[0,T]$,  the following hold. 

\begin{description} 
 
 \item[(1)]  System (\ref{pi14}) has a unique solution   $X \in B_{{\mathbb F}_T}^{\infty}([0,T],L^2(\Omega,{\mathbb R}^{n+1}))$  having a continuous modification, that is, $X \in C([0,T],{\mathbb R}^{n+1})$,  ${\mathbb P}-$a.s. Moreover, $\Lambda^u(t) \in L^p(\Omega, {\mathbb F}_{0,t}, {\mathbb P}; {\mathbb R}), \forall t \in [0,T]$ for any finite $p$, and also $\Lambda^u \in L^\infty(\Omega, {\mathbb F}_{0,t}, {\mathbb P}; {\mathbb R}), \forall t \in [0,T]$;
 
\item[(2)]  The solution of  system  (\ref{pi14}) is continuously dependent on the control, in the sense that, as $u^{i, \alpha} \buildrel v\over  \longrightarrow u^{i,o}$  in ${\mathbb U}_{rel}^i[0,T]$, $\forall i \in {\mathbb Z}_N$,  then $X^\alpha \buildrel s\over \longrightarrow X^o $ in $B_{{\mathbb F}_T}^{\infty}([0,T],L^2(\Omega,{\mathbb R}^{n+1}))$.

\end{description}
\end{lemma}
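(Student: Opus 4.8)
The plan is to exploit the fact that the augmented dynamics (\ref{pi14}) are triangular: the $x$-component is just (\ref{pi4}) and does not see $\Lambda$, while the $\Lambda$-component (\ref{pi8}) is linear in $\Lambda$ with coefficient $h(\cdot,x,u)$. So for part (1) I would first solve (\ref{pi4}). Under (A1)--(A4) the relaxed drift and diffusion defined via the barycenter (\ref{pd1}) inherit the same Lipschitz and linear-growth bounds uniformly in $u$, hence a standard Picard iteration (contraction) on $B_{{\mathbb F}_T}^{\infty}([0,T],L^2(\Omega,{\mathbb R}^n))$ yields a unique ${\mathbb F}_{0,t}$-adapted solution with a continuous modification. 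Given $x$, the process $\Lambda^u$ is the exponential (\ref{pi5})--(\ref{pi6}); writing $M_t=\int_0^t h^* D^{-1}dy$, its predictable quadratic variation is $\langle M\rangle_t=\int_0^t h^* D^{-1}h\,ds$, which under (A5$'$) and (A8) is bounded by $CT$. Using the identity $\big(\Lambda^u(t)\big)^p={\cal E}(pM)_t\,\exp\!\big(\tfrac{p^2-p}{2}\langle M\rangle_t\big)$ together with ${\mathbb E}\,{\cal E}(pM)_t\le 1$ gives $\sup_{t\in[0,T]}{\mathbb E}\,|\Lambda^u(t)|^p\le e^{C_p T}<\infty$ for every finite $p$, which delivers the stated moment bounds and shows $X=\mathrm{Vector}\{\Lambda,x\}\in B_{{\mathbb F}_T}^{\infty}([0,T],L^2(\Omega,{\mathbb R}^{n+1}))$.

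For part (2) the architecture I would use is Gronwall after a state/control splitting. Setting $\delta X=X^\alpha-X^o$ and subtracting the integral forms of (\ref{pi14}), I split each coefficient increment as $F(s,X^\alpha,u^\alpha)-F(s,X^o,u^o)=\big[F(s,X^\alpha,u^\alpha)-F(s,X^o,u^\alpha)\big]+\big[F(s,X^o,u^\alpha)-F(s,X^o,u^o)\big]$, and likewise for $G$. The first brackets are bounded by $K|\delta X|$ by (A1),(A3),(A6), using boundedness of $h$ (A5$'$) and $D^{-\frac12}$ (A8), and controlling the $\Lambda h^*$ entry of $G$ through Cauchy--Schwarz and the $L^p$ bounds on $\Lambda$ from part (1). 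Applying It\^o's isometry and the Burkholder--Davis--Gundy inequality to the stochastic integrals and then Gronwall's inequality yields $\sup_{t}{\mathbb E}|\delta X(t)|^2\le C\,\rho^\alpha$, where $\rho^\alpha$ collects the control-variation terms evaluated along the fixed limit state $X^o$, namely $\sup_t{\mathbb E}\big|\int_0^t[F(s,X^o,u^\alpha)-F(s,X^o,u^o)]\,ds\big|^2$ and $\sup_t{\mathbb E}\int_0^t|G(s,X^o,u^\alpha)-G(s,X^o,u^o)|^2\,ds$. Everything then reduces to proving $\rho^\alpha\to 0$.

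This last step is the main obstacle, precisely because the controls converge only vaguely (weak$^*$) while $\rho^\alpha$ demands \emph{strong} $L^2$ convergence of the relaxed coefficients. I would handle it with three devices. First, telescope the $N$-fold product $\prod_i u^{i,\alpha}_s(d\xi^i)-\prod_i u^{i,o}_s(d\xi^i)$ into single-player differences and invoke joint weak convergence of product probability measures on the compact set ${\mathbb A}^{(N)}$ (compactness from (A0)). Second, since the integrands $f(s,x^o,\cdot),\sigma(s,x^o,\cdot),h(s,x^o,\cdot)$ are ${\mathbb F}$-adapted whereas the weak$^*$ duality in Definition~\ref{srategiesr} tests only against ${\cal G}_T^{y^i}$-adapted functions, I would insert the conditional-expectation projection $\proj$ onto ${\cal G}_{0,s}^{y^i}$ so that the vague convergence becomes applicable to the projected (continuous in $\xi$) test functions. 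Third, I would upgrade the resulting weak convergence to strong $L^2$ convergence of the coefficient evaluations via a generalized dominated-convergence argument for relaxed controls, using continuity in the control variable (A7), compactness (A0), and the uniform domination furnished by (A2),(A4),(A5$'$) and the moment bounds of part (1).

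The genuinely delicate point inside this step is the diffusion term: It\^o's isometry turns its stochastic integral into the $L^2$-norm of $G(s,X^o,u^\alpha)-G(s,X^o,u^o)$, so mere weak convergence of the integrand is insufficient and one must establish the strong $L^2([0,T]\times\Omega)$ convergence $\sigma(s,x^o,u^\alpha)\to\sigma(s,x^o,u^o)$. This is exactly where the continuity assumption (A7) in $\xi$ and the compactness of the action sets are indispensable, and I expect the careful verification there to be the technical heart of the argument. Once $\rho^\alpha\to 0$ is secured, Gronwall gives $X^\alpha\buildrel s\over\longrightarrow X^o$ in $B_{{\mathbb F}_T}^{\infty}([0,T],L^2(\Omega,{\mathbb R}^{n+1}))$; in practice I would prove the $x$-component convergence first and then feed $x^\alpha\to x^o$ into the linear $\Lambda$-equation to close the estimate for the full augmented state.
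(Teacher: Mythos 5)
Your architecture coincides with the paper's: part (1) is a fixed-point argument for the $x$-component plus exponential-martingale moment estimates for $\Lambda^u$ (your identity $(\Lambda^u)^p={\cal E}(pM)\exp(\tfrac{p^2-p}{2}\langle M\rangle)$ with $\langle M\rangle_T\le CT$ is exactly the computation the paper performs for $p=2$, generalized to all finite $p$), and part (2) is a Gronwall estimate after splitting each coefficient increment into a state part and a control part, exploiting the triangular structure by proving $x^\alpha\to x^o$ first and then feeding it into the linear $\Lambda$-equation. There is, however, one concrete omission: statement (1) also asserts $\Lambda^u(t)\in L^\infty(\Omega,{\mathbb F}_{0,t},{\mathbb P};{\mathbb R})$ for every $t$, and $L^p$ for all finite $p$ does not imply this. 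The paper derives the essential-sup bound by applying Doob's inequality to the submartingale $|\Lambda^u|^2$ together with a Borel--Cantelli argument, and then uses that very bound in part (2) to extract $|\Lambda^\alpha|^2$ from the error integrals. Your substitute (Cauchy--Schwarz plus the $L^p$ bounds) does close part (2) without the $L^\infty$ property and is arguably cleaner, but the $L^\infty$ clause of the lemma is then left unproven; you need to add that step.

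The point you flag as the technical heart --- upgrading vague (weak$^*$) convergence of $u^\alpha$ to strong $L^2([0,T]\times\Omega)$ convergence of $\sigma(s,x^o,u_s^\alpha)$ and $h(s,x^o,u_s^\alpha)$, as the It\^o isometry for the martingale terms demands --- is exactly where the paper is also thinnest: it simply asserts that ``by virtue of vague convergence and the uniform continuity assumption {\bf (A7)}'' the corresponding integrands converge to zero a.e.\ and then invokes dominated convergence. You have therefore located the crux correctly, and your proposed devices (telescoping the product measures into single-player differences, conditioning the test functions onto ${\cal G}_{0,s}^{y^i}$ so that the weak$^*$ duality of Definition~\ref{srategiesr} applies, then a dominated-convergence upgrade using {\bf (A0)}, {\bf (A7)} and the moment bounds) are reasonable, but as written neither your sketch nor the paper supplies a complete justification of the passage from weak$^*$ convergence of the relaxed controls to strong $L^2$ convergence of the coefficient barycenters; this is the one step that still requires a genuine argument rather than an appeal to convergence of integrals against fixed test functions.
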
 

\begin{proof} {\bf (1)}. We consider $X =(\Lambda, x)$ and we show the statements component wise. The proof regarding $x$ is standard and can be shown using the Banach fixed point theorem, hence it is ommitted. 
% applied to the operator  $F$ defined on the Banach space  $ B_{{\mathbb %F}_{0,T}}^\infty([0,T],L^2(\Omega,R^n))$, where
%\begin{eqnarray}
% (Fx)(t) \equiv x_0 + \int_0^t f(s,x(s),u_s)~ds + \int_0^t \sigma(s,x(s),u_s)dW(s), %\hst t \in [0,T]. \label{eq2} 
%\end{eqnarray}
%Under the assumptions {\bf (A1), (A2), (A4), (A6)}, it is easy to verify using the %classical martingale inequality that  $F: B_{{\mathbb %F}_{0,T}}^{\infty}%([0,T],L^2(\Omega,R^n)) \longrightarrow  B_{{\mathbb %F}_{0,T}}^{\infty}([0,T],L^2(\Omega,R^n))$. Then using the metric $d$ given by $d %= d_T$, where  
%$$ d_t^2 (x,y) \tri \sup_{s \in [0,t]} {\mathbb E} |x(s)-y(s)|_{{\mathbb R}^n}^2, %\hso t \in [0,T]$$ 
 %one can verify that the  $n-$th iterate of $F$ denoted by $F^n \equiv F \circ %F\cdots \circ F$ ($n$ times) is a contraction. Then by Banach fixed point theorem %$F^n$ has a unique fixed point in $B_{{\mathbb %F}_{0,T}}^{\infty}%([0,T],L^2(\Omega,R^n))$ and hence  $F$ itself has one and the %same fixed point.
 % The continuity of the sample paths is established by applying the  classical %Borel-Cantelli lemma. 
 Regarding $\Lambda^{u}$ which satisfies (\ref{pi8}), by the conditions {\bf (A5'),  (A8)}  we deduce that ${\mathbb P}\{ \int_{0}^T |D^{-\frac{1}{2}}(t) h(t,x(t),u_t)|_{{\mathbb R}^k}^2 dt < \infty \}=1$, hence by \cite{liptser-shiryayev1977} there exists a unique non-negative continuous solution  $\Lambda^u(t) = \prod_{i=1}^N \Lambda^{i,u}(t)$, where $\Lambda^{i,u}(\cdot)$ is given by (\ref{pi5}). Next, we show that $\Lambda^u \in B_{{\mathbb F}_{T}}^{\infty}([0,T],L^2(\Omega,{\mathbb R}))$.\\
 Define 
 $$\tau_n \tri \inf\Big\{ t \in [0,T]: \int_{0}^t h^*(s,x(s),u_s)D^{-1}(s)dy(s) \geq n \Big\}.$$
 By the integral solution of (\ref{pi8}) (obtained by It\^o's formulae), and taking $t=\tau_n$ we have 
 \begin{align}
 \Lambda^{u}(\tau_n)  = 1 + \int_{0}^{\tau_n} \Lambda^u(s) h^{*}(s,x(s),u_s)D^{-1}(s) dy(s). \label{pii5}
\end{align}
 Taking the expectation of both sides we obtain
 \bea
 {\mathbb E}\Big\{ \Lambda^u(\tau_n)\Big\}=1. \label{pii6}
 \eea
But $\tau_n \longrightarrow T$ as $n \longrightarrow \infty$, ${\mathbb P}-a.s$. Since   $\Lambda^u(\cdot)$ is nonnegative,  
 letting $n \longrightarrow \infty$ and using Fatou's Lemma we have 
 \begin{align}
 {\mathbb E} \liminf_{n\longrightarrow \infty} \Lambda^u(\tau_n) \leq \liminf_{n\longrightarrow \infty}  {\mathbb E}  \Lambda^u(\tau_n)  =1 .  \label{pii7}
 \end{align}
 Hence,
 \bea
 {\mathbb E} \Big\{ \Lambda^u(T)\Big\} \leq 1. \label{pii8}
 \eea
 Now, 
 \begin{align}
 |\Lambda^{u}(t)|^2 =& \exp\Big\{ 2 \int_{0}^{t} h^{*}(s,x(s),u_s)D^{-1}(s) dy(s)- \frac{2}{2} \int_{0}^{t} |D^{-\frac{1}{2}}(s) h(s,x(s),u_s)|_{{\mathbb R}^k}^2 ds\Big\} \nonumber \\
= & \exp\Big\{ 2 \int_{0}^{t} h^{*}(s,x(s),u_s)D^{-1}(s) dy(s) - \frac{4}{2} \int_{0}^{t} |D^{-\frac{1}{2}}(s) h(s,x(s),u_s)|_{{\mathbb R}^k}^2 ds\Big\} \nonumber \\
  &\times \exp \Big\{  \int_{0}^{t} |D^{-\frac{1}{2}}(s) h(s,x(s),u_s)|_{{\mathbb R}^k}^2 ds\Big\}. \label{pii9}
  \end{align}
 Using {\bf (A5'), (A8)} then there exists a $K>0$ such that 
 \begin{align}
 |\Lambda^{u}(t)|^2 \leq & \exp\{K^2 .T\} \exp\Big\{ 2 \int_{0}^{t} h^{*}(s,x(s),u_s)D^{-1}(s) dy(s) - \frac{4}{2} \int_{0}^{t} |D^{-\frac{1}{2}}(s) h(s,x(s),u_s)|_{{\mathbb R}^k}^2 ds    \Big\}  \label{pii10aa}
  \end{align}
 By (\ref{pii8}) which also holds for $t \in [0,T]$ instead of $T$ and by replacing  $h^*(t,x,u)$ by  $2 h^*(t,x,u)$ then we obtain
 \begin{align}
{\mathbb E} |\Lambda^{u}(t)|^2 
  \leq & \exp\{K^2 .T\} 
{\mathbb E} \Big\{ \exp\Big\{ 2 \int_{0}^{t} h^{*}(s,x(s),u_s)D^{-1}(s) dy(s) \nonumber \\
&- \frac{4}{2} \int_{0}^{t} |D^{-\frac{1}{2}}(s) h(s,x(s),u_s)|_{{\mathbb R}^k}^2 ds\Big\} \Big\}
 \leq  \exp\{K^2 .T\} .1  \label{pii10}
  \end{align}
  This shows that $\Lambda^u \in B_{{\mathbb F}_{T}}^{\infty}([0,T],L^2(\Omega,{\mathbb R}))$. The same procedure can be used to show that $\Lambda^u(t) \in L^p(\Omega, {\mathbb F}_{0,t}, {\mathbb P}; {\mathbb R}), \forall t \in [0,T]$ for any finite $p$.\\
 Since $\Lambda^u(\cdot)$ is an ${\mathbb F}_T$-martingale (with right continuous trajectories)  then $|\Lambda^u(\cdot)|^2$ is an ${\mathbb F}_T$-submartingale with right continuous trajectories. Therefore, we can apply Doob's $L^p$-inequality for $p=2$ to obtain
\bes
{\mathbb P}\Big\{\sup_{t \in [0,T]} |\Lambda^u(t)|^2 > n\Big\} \leq \frac{{\mathbb E} |\Lambda^u(T) |^4}{n^2} \leq \frac{1}{n^2} K, \hst K>0
\ees
where the last inequality follows from  $\Lambda^u(t) \in L^p(\Omega, {\mathbb F}_{0,t}, {\mathbb P}; {\mathbb R}), \forall t \in [0,T]$, and hence for $p=4$, 
Let $\Delta_n \tri \Big\{\omega \in \Omega: \hso  \sup_{t \in [0,T]} |\Lambda^u(t,\omega)|^2 > n \Big\}$, then
\bes
\sum_{n=1}^{\infty} {\mathbb P}\Big\{\Delta_n\Big\} \leq K. \sum_{n=1}^{\infty} \frac{1}{n^2}  < \infty .
\ees
Define $\Delta^{*} \tri \overline{\lim}\Delta_n =\bigcap_{k=1}^{\infty} \bigcup_{n=k}^{\infty} \Delta_n \equiv \{\Delta_n \hso\mbox{i.o.}\}$. By the  Borel-Cantelli lemma (first part) we have, ${\mathbb P}\Big\{\Delta_n \hso  \mbox{i.o.}\Big\} =0$. This means that  ${\mathbb P}- ess \sup_{\omega \in \Omega} \sup_{t \in [0,T]}|\Lambda^u(t,\omega)|^2 < M$ for some  finite $M>0$.

{\bf (2)} Since the class of policies ${\mathbb U}_{rel}^i[0,T]$, $\forall i \in {\mathbb Z}_N$ is compact in the vague topology, then $\times_{i=1}^N {\mathbb U}_{rel}^i[0,T]$ is also compact in this topology. Utilizing this observation the  proof regarding $x$  is identical to that of  \cite{ahmed-charalambous2012a}, Lemma~3.1.\\
 Next, we consider the second part asserting the  continuity of $u$ to solution map  $u\longrightarrow \Lambda.$  Let $\{\{u^{i,\alpha}: i=1,2,\ldots, N\},u^o\}$ be any pair of strategies from ${\mathbb U}_{rel}^{(N)}[0,T] \times {\mathbb U}_{rel}^{(N)}[0,T]$ and  $\{x^\alpha,x^o\}$, $\{\Lambda^\alpha, \Lambda^o\}$ denote the corresponding pair of solutions of the system (\ref{pi14}).  Let $u^{i,\alpha}  \buildrel v\over   \longrightarrow u^{i,o}$ in ${\mathbb U}_{rel}^i[0,T], i=1,2,\ldots,N$.  We must show that  $\Lambda^\alpha \buildrel s \over   \longrightarrow \Lambda^o$ in  $B_{{\mathbb F}_T}^{\infty}([0,T], L^2(\Omega,{\mathbb R})).$  
  By the definition of solution to (\ref{pi8}), then 
\begin{align} 
\Lambda^\alpha(t)-\Lambda^o(t) =& \int_0^t       \Big\{\Lambda^{\alpha}(s) -\Lambda^o(s) \Big\} h^*(s,x^o(s),u_s^o)D^{-1}(s)dy(s) \nonumber  \\
&+ \int_0^t  \Lambda^{\alpha}(s) \Big\{h^*(s,x^\alpha(s),u_s^\alpha)-h^*(s,x^o(s),u_s^o)\Big\}D^{-1}(s)  dy(s),  \hst t \in [0,T] \nonumber \\
=& \int_0^t       \Big\{\Lambda^{\alpha}(s) -\Lambda^o(s) \Big\} h^*(s,x^o(s),u_s^o)D^{-1}(s)dy(s)  +e_1^\alpha(t) + e_2^\alpha(t),
 \label{gs20}
\end{align}
where 
\begin{align}
e_1^\alpha(t) \tri & \int_0^t  \Lambda^{\alpha}(s) \Big\{h^*(s,x^\alpha(s),u_s^\alpha)-h^*(s,x^o(s),u_s^\alpha)\Big\}D^{-1}(s)  dy(s) \nonumber \\
e_{2}^\alpha(t) \tri &  \int_0^t \Lambda^\alpha (s)  \Big\{ h^*(s,x^o(s),u^\alpha_s)-h(s,x^o(s),u_s^o)\Big\}D^{-1}(s) dy(s) .  \nonumber   
  \end{align}
From (\ref{gs20}) using Doobs martingale inequality, it follows that there exists constants $C_1, C_2>0$  such that 
\begin{align}
{\mathbb E} | \Lambda^\alpha(t)-\Lambda^o(t)|^2  \leq &  4 C_1 {\mathbb E} \ \int_0^t      |\Lambda^{\alpha}(s) -\Lambda^o(s)|^2  |D^{-\frac{1}{2}}(s)h(s,x^o(s),u_s^o)|_{{\mathbb R}^k}^2 ds \nonumber \\
& +  C_2 \bigl( {\mathbb E}|e_1^\alpha(t)|_{{\mathbb R}^k}^2 + {\mathbb E}|e_2^\alpha(t)|_{{\mathbb R}^k}^2\bigr).      \label{gs20a} 
\end{align}
Clearly, by Assumptions~\ref{A1-A4}, {\bf (A6), (A7)} we also obtain
\begin{align}
{\mathbb E}|e_1^\alpha(t)|_{{\mathbb R}^k}^2 \leq & 4 \: {\mathbb E}\int_0^t |\Lambda^\alpha(t)|^2  | D^{-\frac{1}{2}}(s) \Big(h(s,x^\alpha(s),u_s^\alpha)-h(s,x^o(s),u_s^o)\Big)|_{{\mathbb R}^k}^2 ds \nonumber \\
 \leq & 4  \:    {\mathbb E}    \int_0^t  K^2(s)  \: |\Lambda^\alpha(t)|^2 \:   |x^\alpha(s)-x^o(s)|_{{\mathbb R}^n}^2 ds,  \label{ineq1}      \\
  {\mathbb E}|e_2^\alpha(t)|_{{\mathbb R}^k}^2 \leq & 4 \: {\mathbb  E}\int_0^t    |\Lambda^\alpha(t)|^2   |D^{-\frac{1}{2}}(s) \Big(h(s,x^o(s),u_s^\alpha)-h(s,x^o(s),u_s^o)\Big)|_{ {\mathbb R}^k}^2 ds .   \label{ineq2}
\end{align}
Define $\tau_n \tri \inf \Big\{ t \in [0, T]: |D^{-\frac{1}{2}}(t)h(t,x^o(t),u_t^o|_{{\mathbb R}^k}^2 >n\Big\}. $ Using this stopping time in (\ref{gs20a}) we have 
\begin{align}
{\mathbb E}|\Lambda^\alpha(t\wedge \tau_n)&-\Lambda^o(t \wedge \tau_n)|^2   \leq  4 C_1\: {\mathbb E} \int_0^{t\wedge \tau_n}     |\Lambda^{\alpha}(s) -\Lambda^o(s)|^2  |D^{-\frac{1}{2}}(s)h(s,x^o(s),u_s^o)|_{{\mathbb R}^k}^2 ds \nonumber  \\
&+  4 C_2 \: {\mathbb E} \int_0^{t\wedge \tau_n}  K^2(s) \:  |\Lambda^{\alpha}(s)|^2  \: |x^\alpha(s)-x^o(s)|_{{\mathbb R}^n}^2 ds \nonumber \\
& + 4C_2 \: {\mathbb  E}\int_0^t    |\Lambda^\alpha(t)|^2   |D^{-\frac{1}{2}}(s) \Big(h(s,x^o(s),u_s^\alpha)-h(s,x^o(s),u_s^o)\Big)|_{ {\mathbb R}^k}^2 ds,  \hst t \in [0,T].
 \label{gs20b}
\end{align}
 Hence, 
\begin{align}
{\mathbb E} |\Lambda^\alpha(t\wedge \tau_n)&-\Lambda^o(t\wedge \tau_n)|^2 \leq 4n C_1 \: {\mathbb E} \int_0^{t\wedge \tau_n}      |\Lambda^{\alpha}(s) -\Lambda^o(s)|^2  ds \nonumber  \\
&+  4 C_2 \: {\mathbb E} \int_0^{t\wedge \tau_n} K^2(s)\:  |\Lambda^{\alpha}(s)|^2 \:  |x^\alpha(s)-x^o(s)|_{{\mathbb R}^n}^2 ds \nonumber \\
&+ 4C_2 \: {\mathbb  E}\int_0^{t\wedge \tau_n}    |\Lambda^\alpha(t)|^2   |D^{-\frac{1}{2}}(s) \Big(h(s,x^o(s),u_s^\alpha)-h(s,x^o(s),u_s^o)\Big)|_{ {\mathbb R}^k}^2 ds,  \hst t \in [0,T].
 \label{gs20c}
\end{align}
It is easy to see that this inequality is the same as the following one 
\begin{align}
{\mathbb E} \Big\{|\Lambda^\alpha(t\wedge \tau_n)&-\Lambda^o(t\wedge \tau_n)|^2 \Big\}  \leq n C_1 \: \int_0^{t\wedge \tau_n}     {\mathbb  E} \Big\{|\Lambda^{\alpha}(s\wedge \tau_n) -\Lambda^o(s\wedge\tau_n)|^2  \Big\} ds \nonumber  \\
&+  4 C_2 \:  {\mathbb E} \int_0^{t\wedge \tau_n} K^2(s) \:  |\Lambda^{\alpha}(s)|^2 \:|x^\alpha(s)-x^o(s)|_{{\mathbb R}^n}^2 ds, \nonumber \\
&+ 4C_2 \: {\mathbb  E}\int_0^{t \wedge \tau_n}    |\Lambda^\alpha(t)|^2  \: |D^{-\frac{1}{2}}(s) \Big(h(s,x^o(s),u_s^\alpha)-h(s,x^o(s),u_s^o)\Big)|_{ {\mathbb R}^k}^2 ds, \hst t \in [0,T].
 \label{gs20d}
\end{align} Applying  Gronwall Lemma to  (\ref{gs20d}) we obtain 
\begin{align}
{\mathbb E} \Big\{|\Lambda^\alpha(t\wedge \tau_n)&-\Lambda^o(t\wedge \tau_n)|^2\Big\}   \leq 4n C_1 \int_0^{t\wedge \tau_n}  {\mathbb E}      \Big\{|\Lambda^{\alpha}(s\wedge \tau_n) -\Lambda^o(s\wedge\tau_n)|^2  \Big\} ds \nonumber  \\
&+  4 C_2 {\mathbb E} \int_0^{T} K^2(s)\:  |\Lambda^{\alpha}(s)|^2 \: |x^\alpha(s)-x^o(s)|_{{\mathbb R}^n}^2ds \nonumber \\
&+ 4C_2 \: {\mathbb  E}\int_0^{T}    |\Lambda^\alpha(t)|^2  \: |D^{-\frac{1}{2}}(s) \Big(h(s,x^o(s),u_s^\alpha)-h(s,x^o(s),u_s^o)\Big)|_{ {\mathbb R}^k}^2 ds,   \hst t \in [0,T].
 \label{gs20e}
\end{align} giving
 \begin{align} \sup_{t \in [0,T]} 
E|\Lambda^\alpha(t\wedge \tau_n)&-\Lambda^o(t\wedge \tau_n)|^2   \leq  4C_2  \exp\{ 4n C_1 T\} \:
{\mathbb  E} \int_0^{T}   |\Lambda^{\alpha}(s)|^2 \Big(K^2(s)\:  |x^\alpha(s)-x^o(s)|_{{\mathbb R}^n}^2 \nonumber \\
&+   |D^{-\frac{1}{2}}(s) \Big(h(s,x^o(s),u_s^\alpha)-h(s,x^o(s),u_s^o)\Big)|_{ {\mathbb R}^k}^2          \Big) ds.
 \label{gs20f}
\end{align} 
Since by part {\bf (1)},  ${\mathbb P}- ess \sup_{\omega \in \Omega} \sup_{t \in [0,T]}|\Lambda^\alpha(t,\omega)|^2 < M$ for some  finite $M>0$,  applying this in (\ref{gs20f}) we deduce the following bound.
\begin{align} \sup_{t \in [0,T]} 
&E|\Lambda^\alpha(t\wedge \tau_n)-\Lambda^o(t\wedge \tau_n)|^2   \leq  4C_2  \exp\{ 4nC_1 T\} . {\mathbb P}- ess \sup_{\omega \in \Omega} \sup_{t \in [0,T]}|\Lambda^\alpha(t,\omega)|^2 \nonumber \\
& . \Big\{ \int_0^{T} K^2(s)\:  {\mathbb E} |x^\alpha(s)-x^o(s)|_{{\mathbb R}^n}^2 ds + {\mathbb E} \int_{0}^T |D^{-\frac{1}{2}}(s) \Big(h(s,x^o(s),u_s^\alpha)-h(s,x^o(s),u_s^o)\Big)|_{ {\mathbb R}^k}^2     ds \Big\} \nonumber \\
 &\leq  4C_2  \exp\{ 4n C_1 T\} . M  . \Big\{ \int_0^{T} K^2(s)\:  {\mathbb E} |x^\alpha(s)-x^o(s)|_{{\mathbb R}^n}^2 ds \nonumber \\
 &+ {\mathbb E} \int_{0}^T |D^{-\frac{1}{2}}(s) \Big(h(s,x^o(s),u_s^\alpha)-h(s,x^o(s),u_s^o)\Big)|_{ {\mathbb R}^k}^2  ds \Big\} .\label{gs20fff}
\end{align} 
Now, utilizing {\bf (1)},    letting $\alpha \longrightarrow \infty$ and recalling that $x^{\alpha} $ converges to $x^o$  in $B_{{\mathbb F}_T}^{\infty}([0,T], L^2(\Omega,{\mathbb R}^n))$, the first integrand in the right hand side of (\ref{gs20fff}) converge to zero for almost all $s \in [0,T], {\mathbb P}-$a.s. Also, by virtue of vague convergence $u^{i,\alpha}   \buildrel v\over    \longrightarrow u^{i,o}$, and the uniform continuity assumption {\bf (A7)} on $h(t,x,\cdot)$, the second integrand in the right hand side of (\ref{gs20fff}) converge to zero for almost all $s \in [0,T], {\mathbb P}-$a.s.  Since by our assumptions the integrands are dominated by integrable functions, by Lebesgue dominated convergence theorem   we obtain  
\begin{align} \lim_{\alpha \rightarrow \infty}  \sup_{t \in [0,T]}
{\mathbb E} |\Lambda^\alpha(t\wedge \tau_n)-\Lambda^o(t\wedge \tau_n)|^2  = 0, \hso \mbox{for every $n \in N$}. 
 \label{gs20g}\end{align}                  
  Since 
\bes  
   {\mathbb P} \Big\{ \sup_{t \in [0,T] }  |D^{-\frac{1}{2}}(t)h(t,x^o(t),u_t^o)|_{{\mathbb R}^n}^2  > n \Big\} \longrightarrow 0 \: \mbox{ as} \: \: n \longrightarrow \infty,
\ees   
 it is clear that $\lim_{ n\longrightarrow  \:  \infty} \tau_n = T.$ Hence, $\Lambda^{\alpha} $ converges in  the mean square sense on $[0,T)$. 
\end{proof}

\ \

Next, we address the question of existence of team and PbP optimal strategies.\\
We need the following assumptions.

\begin{assumptions}
\label{assumptionscost}
The functions $\ell$ and  $\varphi$ associated with the  pay-off (\ref{pi15}) are  Borel measurable maps:
\bes
\ell: [0,T] \times {\mathbb R}^n\times {\mathbb A}^{(N)} \longrightarrow (-\infty,+\infty], \hst  \varphi:{\mathbb R}^n \longrightarrow (-\infty,+\infty].
\ees
satisfying  the following basic conditions:

\begin{description}
\item[(B1)] $x\longrightarrow \ell(t,x,\xi)$ is  continuous on ${\mathbb R}^n$ for each $t\in [0,T]$, uniformly with respect to $\xi \in {\mathbb A}^{(N)}$;

\item[(B2)] $\exists$ $h \in L_1^+([0,T], {\mathbb R})$ such that for each $t \in [0,T]$,  $|\ell(t,x,\xi)| \leq h(t) (1 + |x|_{{\mathbb R}^n}^2)$;

\item[(B3)] $x \longrightarrow \varphi(x)$ is continuous on ${\mathbb R}^n$ and  $\exists$ $c_0,c_1\geq 0$ such that $|\varphi(x)| \leq c_0 + c_1 |x|_{{\mathbb R}^n}^2.$
\end{description}
\end{assumptions}

\ \

 Using the  results of Lemma~\ref{lemma3.1} in the next theorem we establish existence of  team and PbP optimal strategies $u^o \in {\mathbb U}_{rel}^{(N)}[0,T]$ for Problem~\ref{problemfp1}, \ref{problemfp2}.

\begin{theorem}(Existence of Team Optimal Strategies)
\label{theorem3.2}
Consider Problem~\ref{problemfp1} and suppose Assumptions~\ref{A1-A4}  and \ref{assumptionscost} hold.  Then there exists a  team  decision $u^o \tri (u^{1,o},u^{2,o}, \ldots, u^{N,o}) \in {\mathbb  U}_{rel}^{(N)}[0,T]$ at  which  $J(u^1, u^2, \ldots, u^N)$ attains its infimum.\\
Existence also holds for PbP decisions of  Problem~\ref{problemfp2}.
\end{theorem}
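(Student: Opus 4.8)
The plan is to prove existence by the direct method of the calculus of variations, exploiting the weak$^*$ compactness of the admissible relaxed strategy space together with the continuous dependence established in Lemma~\ref{lemma3.1}. The key structural facts are already in place: each $\mathbb{U}_{rel}^i[0,T]$ is compact in the vague (weak$^*$) topology, hence the product $\mathbb{U}_{rel}^{(N)}[0,T] = \times_{i=1}^N \mathbb{U}_{rel}^i[0,T]$ is compact by Tychonoff's theorem, and by Lemma~\ref{lemma3.1}(2) the solution map $u \mapsto X^u$ is continuous from $\mathbb{U}_{rel}^{(N)}[0,T]$ (vague) into $B_{\mathbb{F}_T}^\infty([0,T],L^2(\Omega,\mathbb{R}^{n+1}))$ (strong). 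So the whole argument reduces to showing that the cost functional $J(\cdot)$ is lower semicontinuous (in fact continuous) along vaguely convergent sequences of strategies, after which a minimizing sequence will have a convergent subnet whose limit attains the infimum.

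\textbf{First} I would take a minimizing sequence $\{u^\alpha\} \subset \mathbb{U}_{rel}^{(N)}[0,T]$ with $J(u^\alpha) \downarrow \inf J$; by compactness extract a subnet $u^{\alpha} \buildrel v\over\longrightarrow u^o \in \mathbb{U}_{rel}^{(N)}[0,T]$. \textbf{Next}, apply Lemma~\ref{lemma3.1}(2) to get $X^\alpha = (\Lambda^\alpha, x^\alpha) \buildrel s\over\longrightarrow X^o = (\Lambda^o, x^o)$ in $B_{\mathbb{F}_T}^\infty$, so in particular $\sup_t \mathbb{E}|x^\alpha(t)-x^o(t)|^2 \to 0$ and $\sup_t \mathbb{E}|\Lambda^\alpha(t)-\Lambda^o(t)|^2 \to 0$. \textbf{Then} I would analyze the two terms of
\[
J(u) = \mathbb{E}\Big\{\int_0^T \Lambda^u(t)\,\ell(t,x(t),u_t)\,dt + \Lambda^u(T)\,\varphi(x(T))\Big\}.
\]
For the terminal term, continuity of $\varphi$ (Assumption~\ref{assumptionscost}, \textbf{(B3)}) with quadratic growth, the strong $L^2$-convergence of $x^\alpha(T)$ and $\Lambda^\alpha(T)$, together with the uniform bound $\Lambda^\alpha \in L^\infty(\Omega)$ from Lemma~\ref{lemma3.1}(1), give convergence of $\mathbb{E}\{\Lambda^\alpha(T)\varphi(x^\alpha(T))\}$ via a uniform-integrability/Vitali argument. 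For the running-cost term the subtlety is the explicit dependence of $\ell(t,x,u_t) = \int_{\mathbb{A}^{(N)}} \ell(t,x,\xi)\,u_t(d\xi)$ on the relaxed control itself; here I would split $\ell(t,x^\alpha,u^\alpha_t) - \ell(t,x^o,u^o_t)$ into a part controlled by $|x^\alpha - x^o|$ using continuity in $x$ (\textbf{(B1)}) and a part handled by the \emph{vague} convergence of $u^\alpha$ tested against the continuous integrand $\ell(t,x^o,\cdot)$ (\textbf{(A7)}, \textbf{(B1)}), exactly the mechanism used for $e_2^\alpha$ in the proof of Lemma~\ref{lemma3.1}.

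\textbf{The main obstacle} I anticipate is the coupling between the weight $\Lambda^\alpha$ and the control-dependent integrand $\ell(\cdot,u^\alpha_t)$: vague convergence only tests against fixed $L^1$-integrands in $\varphi \in L^1_{\mathcal{G}_T^{y^i}}([0,T],C(\mathbb{A}^i))$, whereas here the test ``function'' $\Lambda^\alpha(t)\ell(t,x^\alpha(t),\cdot)$ is itself moving with $\alpha$. The clean way around this is to first replace the moving weight $\Lambda^\alpha$ by the limit $\Lambda^o$ (the error is bounded by $\mathbb{E}\int_0^T |\Lambda^\alpha-\Lambda^o|\,|\ell|\,dt$, which vanishes by strong $L^2$-convergence of $\Lambda^\alpha$, the growth bound \textbf{(B2)}, and the uniform second-moment bound on $x^\alpha$), and likewise replace $x^\alpha$ by $x^o$ inside $\ell$ using \textbf{(B1)} and the uniform boundedness of $\Lambda^\alpha$; only then does one invoke vague convergence $u^\alpha \buildrel v\over\to u^o$ against the now-\emph{fixed} integrand $\Lambda^o(t)\ell(t,x^o(t),\cdot)$ to conclude $\mathbb{E}\int_0^T \Lambda^o\ell(t,x^o,u^\alpha_t)\,dt \to \mathbb{E}\int_0^T \Lambda^o\ell(t,x^o,u^o_t)\,dt$. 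Assembling these three limits gives $J(u^\alpha)\to J(u^o)$, hence $J(u^o)=\inf J$, proving team optimality. For PbP optimality of Problem~\ref{problemfp2} the identical argument applies after fixing $u^{-i,o}$ and minimizing over the single compact factor $\mathbb{U}_{rel}^i[0,T]$, since each such sub-problem is again a continuous functional on a compact set.
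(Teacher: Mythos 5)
Your proposal is correct and follows essentially the same route as the paper's proof: compactness of ${\mathbb U}_{rel}^{(N)}[0,T]$ in the vague topology, strong convergence of $(x^\alpha,\Lambda^\alpha)$ from Lemma~\ref{lemma3.1}, Fatou/continuity for the terminal term, and the same three-way decomposition of the running cost (freeze $\Lambda^\alpha\to\Lambda^o$ and $x^\alpha\to x^o$ first, then test vague convergence of $u^\alpha$ against the fixed integrand). The only cosmetic difference is that the paper settles for lower semicontinuity via Fatou's lemma where you argue full continuity; both suffice.
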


 \begin{proof}  Since the class of control policies  ${\mathbb U}_{rel}^N[0,T]$ %and ${\mathbb  U}_{rel}^{(N),I^u}[0,T]$
 is compact in the vague  topology, it suffices to prove that $J(\cdot)$ given by (\ref{pi15}) is lower semicontinuous with respect to this topology.  
Suppose $u^{i,\alpha} \buildrel v \over\longrightarrow u^{i,o}$ in  ${\mathbb U}_{rel}^i[0,T]$ for $i=1,2,\ldots, N$  and let $\{X^\alpha,X^o\} \subset B_{{\mathbb F}_T}^{\infty}([0,T],L^2(\Omega,{\mathbb R}^{n+1}))$ denote the solutions of equation (\ref{pi14}) corresponding to the sequence  $\{\{u^{i,\alpha}: i=1,2,\ldots,N\},u^o\} \subset {\mathbb U}_{rel}^{(N)}[0,T]$. Then by Lemma~\ref{lemma3.1}, along a subsequence if necessary, $x^\alpha \buildrel s \over\longrightarrow x^o$ in $B_{{\mathbb F}_{0,T]}}^{\infty}([0,T],L^2(\Omega,{\mathbb R}^n))$ and $\Lambda^\alpha \buildrel s \over\longrightarrow \Lambda^o$ in $B_{{\mathbb F}_{0,T]}}^{\infty}([0,T],L^2(\Omega,{\mathbb R}))$  Firstly,   in view of the strong convergence, along  a subsequence if necessary, $x^\alpha(T) \longrightarrow x^o(T), \Lambda^\alpha(T) \longrightarrow \Lambda^o(T), {\mathbb P}-$a.s. By the continuity of $\varphi$, Assumptions~\ref{assumptionscost}, {\bf (B3)}, we have $ \varphi(x^o(T)) \leq \liminf_{\alpha} \varphi(x^\alpha(T)), {\mathbb P}-a.s.$  and also get    $\Lambda^o(T) \varphi(x^o(T)) \leq \liminf_{\alpha} \Lambda^\alpha(T) \varphi(x^\alpha(T)), {\mathbb P}-a.s.$ Hence,  ${\mathbb E} \Big\{\Lambda^o(T) \varphi(x^o(T)) \Big\}\leq {\mathbb E} \Big\{\liminf_{\alpha} \Lambda^\alpha(T) \varphi(x^\alpha(T))\Big\}$.  Thus, it follows from Assumptions~\ref{assumptionscost},  {\bf (B3)}, by applying Fatou's lemma  that
 \begin{eqnarray}
 {\mathbb E} \{ \Lambda^o(T) \varphi(x^o(T))\} \leq \liminf_n {\mathbb E} \Big\{\Lambda^\alpha(T) \varphi(x^\alpha(T))\Big\}. \label{eq6}
 \end{eqnarray}
 Next,  consider   the integral pay-off;  it can be shown  that \begin{align}
 {\mathbb E} \int_{[0,T]} \Lambda^o(t) \: \ell(t,x^o(t),u_t^o)dt =&  {\mathbb E}  \int_{[0,T]} \Lambda^o(t) \: \ell(t,x^o(t),u_t^o-u_t^\alpha)dt +  {\mathbb E} \int_{[0,T]} \Lambda^\alpha(t) \:  \ell(t,x^\alpha(t),u_t^\alpha)dt \nonumber \\
  &+  {\mathbb E} \int_{[0,T]} \Big( \Lambda^o(t)\ell(t,x^o(t), u_t^\alpha)-  \Lambda^\alpha(t) \ell(t,x^\alpha(t),u_t^\alpha)\Big)dt .\label{eq7}
  \end{align}
  By virtue of vague  convergence (as in the derivation of Lemma~\ref{lemma3.1}) of the product measure $\times_{i=1}^N u^{i,\alpha}$ to $\times_{i=1}^Nu^{i,o},$ and Doobs inequality together with Borel-Cantelli lemma, we have  ${\mathbb P}- ess \sup_{\omega \in \Omega} \sup_{t \in [0,T]}|\Lambda^o(t,\omega)| < M$ for some  finite $M>0$, hence    it is evident that  for every $\varepsilon >0$ there exists an integer $\alpha_{1,\varepsilon}$ sufficiently large, such that the absolute value of the  first term on the right hand side of equation (\ref{eq7})  is less than $\varepsilon/3 $ for all $\alpha \geq \alpha_{1,\varepsilon}.$  
 Expressing the last right hand side term in (\ref{eq7}) as 
  \begin{align}
   {\mathbb E}& \int_{[0,T]} \Big( \Lambda^o(t)\ell(t,x^o(t), u_t^\alpha)-  \Lambda^\alpha(t) \ell(t,x^\alpha(t),u_t^\alpha)\Big)dt \nonumber \\
   & =  {\mathbb E} \int_{[0,T]}  \Lambda^o(t) \Big(\ell(t,x^o(t), u_t^\alpha)-  \ell(t,x^\alpha(t),u_t^\alpha)\Big)dt   + {\mathbb E} \int_{[0,T]} \Big( \Lambda^o(t) -\Lambda^\alpha(t)\Big)  \ell(t,x^\alpha(t),u_t^\alpha)dt,     \label{eq7n}
  \end{align}
  by 
  Assumptions~\ref{assumptionscost}, {\bf (B1), (B2)}, in  particular the continuity of $\ell$ in $x$ uniformly in ${\mathbb A}^{(N)}$,  it is easy to verify that there exists an integer $\alpha_{2,\varepsilon}$ such that for all $\alpha \geq \alpha_{2,\varepsilon}$, the absolute value of the first term on the right hand side of (\ref{eq7n}) is less than $\varepsilon/3$,  and that by Lebesgue Dominated convergence theorem, and Lemma~\ref{lemma3.1} that  there exists an integer $\alpha_{3,\varepsilon}$ such that for all $\alpha \geq \alpha_{3,\varepsilon}$, the absolute value of the second term on the right hand side of (\ref{eq7n}) is less than $\varepsilon/3$. By combining these observations we obtain the following inequality
\begin{eqnarray*}
 {\mathbb E} \int_{[0,T]} \ell(t,x^o(t),u_t^o)dt  \leq \varepsilon + \int_{[0,T]} \ell(t,x^\alpha(t),u^\alpha_t) dt, \hst \forall \alpha \geq \alpha_{1,\varepsilon} \vee \alpha_{2,\varepsilon} \vee \alpha_{3,\varepsilon}.
 \end{eqnarray*}
 Since $\varepsilon >0$ is arbitrary, it follows from the above inequality that
 \begin{eqnarray}
 {\mathbb E} \int_{[0,T]} \ell(t,x^o(t),u_t^o)dt \leq \liminf_{\alpha}  {\mathbb E} \int_{[0,T]} \ell(t,x^\alpha(t),u_t^\alpha)dt. \label{eq8} \end{eqnarray}
 Combining (\ref{eq6}) and (\ref{eq8}) we arrive at the conclusion that
\bes
 J(u^{1,o},u^{2,o},\ldots,u^{N,o}) \leq \liminf_{\alpha} J(u^{1,\alpha},u^{2,\alpha},\ldots,u^{N,\alpha}),
 \ees
  establishing    lower semicontinuity of $J(\cdot)$ in the vague topology.  Since ${\mathbb U}_{rel}^{(N)}[0,T]$ is  compact in the product (vague)  topology, $J(\cdot)$ attains its minimum on it.  This proves the existence of an optimal team decision from ${\mathbb U}_{rel}^{(N)}[0,T]$. Existence of an optimal PbP decision  is shown similarly.
 \end{proof}

\ \

Since our derivation of stochastic minimum principle (necessary conditions of optimality) or stochastic Pontryagin's minimum principle  will be based on the martingale representation approach, we state a version  of Hilbert space semi martingale representation  which can be found in many references.

\begin{definition}
\label{definition4.2}
  An ${\mathbb R}^{n}-$valued  random process $\{m(t): t \in [0,T]\}$ is said to be a square integrable continuous   ${\mathbb F}_T-$semi martingale if and only if  it has a representation  
  
\begin{eqnarray}
 m(t) = m(0) + \int_0^t v(s) ds + \int_0^t \Sigma(s) dW(s), \hst t \in [0,T], \label{eq12}
  \end{eqnarray} 
   for some $v \in L_{{\mathbb F}_T}^2([0,T],{\mathbb R}^n)$ and $\Sigma \in L_{{\mathbb F}_T}^2([0,T],{\cal L}({\mathbb R}^m,{\mathbb R}^n))$  and for some ${\mathbb R}^n-$valued ${\mathbb  F}_{0,0}-$measurable  random variable  $m(0)$ having finite second moment. The set of all such semi martingales is denoted by ${\cal S M}^2([0,T], {\mathbb R}^n)$. 
\end{definition}

 \noi Introduce the following class of ${\mathbb F}_T-$semi martingales:
   \begin{align}
    {\cal SM}_0^2([0,T], {\mathbb R}^n) \tri \Big\{ m:   m(t) &  = \int_0^t v(s) ds + \int_0^t \Sigma(s) dW(s),  \hst   t \in [0,T], \nonumber \\
    &  \mbox{for} \hso v \in L_{{\mathbb F}_T}^2([0,T],{\mathbb R}^n) \hso \mbox{ and} \hso \Sigma \in L_{{\mathbb F}_T}^2([0,T],{\cal L}({\mathbb R}^m,{\mathbb R}^n)) \Big\}.
    \label{eq13} 
    \end{align}

\noi Now we present the fundamental result which is utilized in the maximum principle derivation.

\begin{theorem}(Semi Martingale Representation)
\label{theorem4.3} 
The class of semi martingales ${\cal SM}_0^2([0,T], {\mathbb R}^n)$ is a real linear vector space and it is a  Hilbert space with respect to the norm topology   $\parallel m\parallel_{{\cal SM}_0^2([0,T], {\mathbb R}^n)}$ arising from
\bes
 \parallel m \parallel_{{\cal SM}^2_0([0,T], {\mathbb R}^n) }^2  \tri {\mathbb  E}\int_{[0,T]} |v(t)|_{{\mathbb R}^n}^2 dt + {\mathbb  E} \int_{[0,T]} tr(\Sigma^*(t)\Sigma(t)) dt. 
 \ees
Moreover,  the space ${\cal SM}_0^2([0,T], {\mathbb R}^n)$ is isometrically isomorphic to the space $$L_{{\mathbb F}_T}^2([0,T],{\mathbb R}^n)\times L_{{\mathbb F}_T}^2([0,T],{\cal L}({\mathbb R}^m,{\mathbb R}^n)).$$
% denoted as $${\cal SM}_0^2[0,T] \cong L_{{\mathbb F}_T}^2([0,T],{\mathbb R}^n)\times L_{{\mathbb F}_T}^2([0,T],{\cal L}({\mathbb R}^m,{\mathbb R}^n)).$$
\end{theorem}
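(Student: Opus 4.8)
The plan is to prove the theorem by exhibiting a single explicit isometric isomorphism and transporting the Hilbert structure across it. Define the linear map
\[
\Phi: L_{{\mathbb F}_T}^2([0,T],{\mathbb R}^n)\times L_{{\mathbb F}_T}^2([0,T],{\cal L}({\mathbb R}^m,{\mathbb R}^n)) \longrightarrow {\cal SM}_0^2([0,T],{\mathbb R}^n),
\]
\[
\Phi(v,\Sigma)(t) := \int_0^t v(s)\,ds + \int_0^t \Sigma(s)\,dW(s), \quad t\in[0,T].
\]
By the very definition (\ref{eq13}) of ${\cal SM}_0^2$, the map $\Phi$ is well defined and surjective, and it is manifestly linear in $(v,\Sigma)$; since sums and scalar multiples of such semi martingales are again of the same form, this already shows ${\cal SM}_0^2([0,T],{\mathbb R}^n)$ is a real linear vector space. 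The entire content of the theorem then reduces to showing that $\Phi$ is a bijection and that the proposed norm agrees with the product norm on the domain.

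The crux, and the only genuinely nontrivial point, is the injectivity of $\Phi$, equivalently the uniqueness of the semi martingale decomposition; this is exactly what guarantees that the stated norm is well defined, i.e.\ independent of the particular representation $(v,\Sigma)$ chosen for a given $m$. Suppose $\Phi(v,\Sigma)\equiv 0$. I would write $\int_0^t \Sigma(s)\,dW(s) = -\int_0^t v(s)\,ds$ for all $t$, ${\mathbb P}$-a.s. The left-hand side is a continuous $L^2$-martingale null at $t=0$ (its square integrability coming from the It\^o isometry together with $\Sigma \in L_{{\mathbb F}_T}^2$), while the right-hand side is a continuous adapted process of finite variation null at $t=0$. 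Since a continuous local martingale with paths of finite variation is almost surely constant, it must vanish identically. Hence $\int_0^t \Sigma\,dW \equiv 0$; taking quadratic variation (again the It\^o isometry) gives ${\mathbb E}\int_0^T \mathrm{tr}(\Sigma^*(t)\Sigma(t))\,dt = 0$, so $\Sigma = 0$ in $L_{{\mathbb F}_T}^2$. Then $\int_0^t v(s)\,ds \equiv 0$ forces $v = 0$ a.e.\ $dt\times d{\mathbb P}$. Thus $(v,\Sigma)$ is uniquely determined by $m$, and $\Phi$ is a linear bijection.

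With uniqueness established, the proposed functional
\[
\parallel m\parallel_{{\cal SM}_0^2([0,T],{\mathbb R}^n)}^2 = {\mathbb E}\int_{[0,T]}|v(t)|_{{\mathbb R}^n}^2\,dt + {\mathbb E}\int_{[0,T]}\mathrm{tr}(\Sigma^*(t)\Sigma(t))\,dt
\]
depends only on $m$ and equals $\parallel(v,\Sigma)\parallel^2$ in the product space; it is therefore a genuine norm, induced by the inner product obtained by pulling back the product inner product $\langle(v_1,\Sigma_1),(v_2,\Sigma_2)\rangle = {\mathbb E}\int v_1\cdot v_2\,dt + {\mathbb E}\int \mathrm{tr}(\Sigma_1^*\Sigma_2)\,dt$ through $\Phi^{-1}$. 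By construction $\Phi$ is then an isometric isomorphism. Finally, each factor $L_{{\mathbb F}_T}^2$ is a sub-Hilbert space of the corresponding ambient $L^2(\Omega\times[0,T])$, as noted when these spaces were introduced, so their product is a Hilbert space; since the isometric image of a complete inner-product space is again complete and the inner product transports, ${\cal SM}_0^2([0,T],{\mathbb R}^n)$ is a Hilbert space isometrically isomorphic to the product, which is precisely the assertion.

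The main obstacle is the uniqueness/injectivity step; everything afterward is bookkeeping once $\Phi$ is known to be a bijection whose defining norm is the pulled-back product norm. The one technical point deserving care there is the justification that the finite-variation and martingale parts cannot cancel, namely the theorem that a continuous local martingale with paths of bounded variation is almost surely constant. The hypothesis $\Sigma \in L_{{\mathbb F}_T}^2$ makes the stochastic integral a true square-integrable martingale, so the argument goes through with only the standard localization and no extra integrability assumptions.
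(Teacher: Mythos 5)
Your proof is correct. The paper itself offers no argument here — its ``proof'' is the single sentence that the result is well known and can be found in many textbooks — and what you have written is precisely the standard argument being alluded to: surjectivity and linearity are immediate from the definition of ${\cal SM}_0^2$, injectivity (hence well-definedness of the norm) follows from the fact that a continuous square-integrable martingale with paths of finite variation vanishing at $0$ is identically zero, and the Hilbert structure is then transported through the resulting linear bijection via the It\^o isometry. No gaps.
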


\begin{proof}  The result is well-known and can be found in many textbooks. %derivation is found in \cite{ahmed-charalambous2012a}.

\end{proof}

\begin{comment}
\label{rem-mart}
 It follows from Theorem~\ref{theorem4.3} that for any  $m \in {\cal SM}^2_0([0,T], {\mathbb R}^n)$ there exists  a unique  pair $(v,\Sigma) \in L_{{\mathbb F}_T}^2([0,T], {\mathbb R}^n) \times L_{{\mathbb F}_T}^2([0,T],{\cal L}({\mathbb R}^m,{\mathbb R}^n))$  representing $m$ through the expression (\ref{eq13}) with $( m(0) = 0),$   and conversely, for   any pair $(v,\Sigma) \in   L_{{\mathbb F}_T}^2([0,T], {\mathbb R}^n) \times L_{{\mathbb F}_T}^2([0,T],{\cal L}({\mathbb R}^m,{\mathbb R}^n)),$ there corresponds a unique semi martingale $m \in {\cal SM}_0^2([0,T], {\mathbb R}^n)$,  given by
\begin{eqnarray}  m(t)  = \int_0^t v(s) ds + \int_0^t \Sigma(s) dW(s), \hst   t \in [0,T]. \label{eq14} \end{eqnarray}  Also note that for any pair $\{m_1,m_2\} \in {\cal SM}^2_0([0,T], {\mathbb R}^n)$ with  intensity parameters $\{v_1,\Sigma_1\}, \{v_2,\Sigma_2\} $ respectively,    the natural inner product is given by 
\begin{align} 
\la m_1,m_2\ra_{{\cal SM}^2_0([0,T], {\mathbb R}^n)} =& \la v_1,v_2 \ra_{  L_{{\mathbb F}_T}^2([0,T], {\mathbb R}^n)      }+ \la \Sigma_1,\Sigma_2\ra_{ L_{{\mathbb F}_T}^2([0,T],{\cal L}({\mathbb R}^m,{\mathbb R}^n))        } \nonumber \\ 
=& {\mathbb  E} \int_{[0,T]} \la v_1(t),v_2(t)\ra_{{\mathbb R}^n} dt + {\mathbb  E} \int_{[0,T]} tr\bigl(\Sigma_1^*(t)\Sigma_2(t)\bigr) dt . \label{eq15}
\end{align}
\end{comment}

\section{Team and PbP Optimality Conditions }
\label{optimality}
In this section, we consider relaxed DM strategies and we derive necessary and sufficient optimality conditions for team optimality (see Problem~\ref{problemfp1}), and we deduce analogous  results for PbP optimality  (see Problem~\ref{problemfp2}). 

 For the  derivation of stochastic  optimality conditions  we shall require stronger regularity conditions on  the coefficients of the state and observation equations  $\{ f,\sigma, h\}$, and coefficients in the reward function  $\{\ell,\varphi\}.$  These  are given below.

\begin{assumptions}
\label{NC1}  
${\mathbb E} | x(0)|_{{\mathbb R}^n}^2 < \infty$, $({\mathbb A}^i, d), \forall i \in {\mathbb Z}_N$ are compact,  the maps  $f,\sigma, \ell$, $\{h^i, D^{i,\frac{1}{2}}, \forall i \in  {\mathbb Z}_N\}$  are  measurable in $t \in [0,T]$, $\varphi$ is Borel measurable, defined by
\begin{align}
& f: [0,T] \times {\mathbb R}^n \times {\mathbb A}^{(N)} \longrightarrow {\mathbb R}^n , \hso \sigma: [0,T] \times {\mathbb R}^n \times {\mathbb A}^{(N)} \longrightarrow {\cal L}({\mathbb R}^m, {\mathbb R}^n), \hso \varphi:  {\mathbb R}^n  \longrightarrow {\mathbb R}, \nonumber  \\
 &\ell : [0,T] \times {\mathbb R}^n \times {\mathbb A}^{(N)} \longrightarrow {\mathbb R} , \hso h^i:[0,T] \times  {\mathbb R}^n \times {\mathbb A}^{(N)} \longrightarrow {\mathbb R}^{k_i}, \hso D^{i,\frac{1}{2}}: [0,T] \longrightarrow {\cal L}({\mathbb R}^{k_i}, {\mathbb R}^{k_i}), 
 \end{align}
for $i=1, \ldots, N$,  and they satisfy the following conditions.
 
\begin{description}

\item[(C1)] The maps $\{f,\sigma\} $ are once continuously differentiable with  respect to $x \in {\mathbb R}^n$;

\item[(C2)] The first derivatives  $\{f_x,\sigma_x\}$ are bounded uniformly on $[0,T] \times {\mathbb R}^n \times {\mathbb A}^{(N)}$;

\item [(C3)] The maps   $\{\ell, \varphi\} $ are  once continuously differentiable with respect to  $x \in {\mathbb R}^n$,  and there exists a $K>0$ such that 
\begin{align}
&\Big(1+ |x|_{{\mathbb R}^n}^2 \Big)^{-1} |\ell(t,x,u)|_{{\mathbb R}} + \Big(1+ |x|_{{\mathbb R}^n} \Big)^{-1}   |\ell_x (t,x,u)|_{{\mathbb R}^n}  \nonumber \\
 &\Big(1+ |x|_{{\mathbb R}^n}^2\Big)^{-1} |\varphi(x)|_{{\mathbb R}} + \Big(1+ |x|_{{\mathbb R}^n}\Big)^{-1} |\varphi_x (x)|_{{\mathbb R}^n} \leq K;  \nonumber 
\end{align}

\item[(C4)] The map $h^i$ is once continuously differentiable  with respect to $x \in {\mathbb R}^n$, and $\{h^i, h_x^i\}$ are bounded  uniformly on $[0,T] \times {\mathbb R}^n \times {\mathbb A}^{(N)}$, for $i=1,2, \ldots, N$;  

\item[(C5)] $D^{i,\frac{1}{2}}$ is  uniformly bounded, the inverse $D^{i,-\frac{1}{2}}$ exists and it is uniformly bounded, for $i=1,2, \ldots, N$.  

\end{description}

\end{assumptions}

\subsection{Optimality Conditions Under Reference Probability Space-$\Big(\Omega ,{\mathbb F}, {\mathbb P}\Big)$ }
\label{partialis}
Next,  we state and prove the optimality conditions by utilizing the augmented system  (\ref{pi14}) and reward (\ref{pi15}), under the reference probability space $\Big(\Omega ,{\mathbb F},\{ {\mathbb F}_{0,t}: t \in [0,T]\}, {\mathbb P}\Big)$. 

We define the Gateaux derivative of $G$ with respect to the  variable  at the point $(t,z,\nu) \in [0,T] \times {\mathbb R}^{n+1}\times_{i=1}^N {\cal M}_1({\mathbb A}^i)
$   in the direction $\eta_1 \in {\mathbb R}^{n+1}$  by
\bes
   G_X(t,z,\nu; \eta_1) \tri   \lim_{\varepsilon \rightarrow 0}\frac{1}{\varepsilon} \Big\{ G(t,z + \varepsilon \eta_1, \nu)- G(t,z,\nu)\Big\}, \hst t \in [0,T].
  \ees 
Clearly, for each column of $G$ denoted by $G^{(j)}, j=1, \ldots,m+k$, the Gateaux derivative of $G^{(j)}$ component wise is given by $G_X^{(j)}(t,z,\nu;\eta_1) = G_X^{(j)} (t,z,\nu)\eta_1, t \in [0,T]$.

\noi   In order to present the necessary conditions of optimality we need the so called variational equation.
Suppose $u^o \tri (u^{1,o}, u^{2,o}, \ldots, u^{N,o}) \in {\mathbb U}_{rel}^{(N)}[0,T]$ denotes the optimal decision and $u \tri (u^1, u^2, \ldots, u^N) \in {\mathbb U}_{rel}^{(N)}[0,T]$ any other decision.  Since ${\mathbb U}_{rel}^i[0,T]$ is convex $\forall i \in {\mathbb Z}_N$, it is clear that  for any $\varepsilon \in [0,1]$, 
\bes
 u_t^{i,\varepsilon} \tri u_t^{i,o} + \varepsilon (u_t^i-u_t^{i,o}) \in {\mathbb U}_{rel}^i[0,T], \hst \forall i \in {\mathbb Z}_N.
 \ees
 Let $X^{\varepsilon}(\cdot)\equiv X^\veps(\cdot; u^\veps(\cdot))$ and  $X^{o}(\cdot) \equiv X^o(\cdot;u^o(\cdot))  \in B_{{\mathbb F}_T}^{\infty}([0,T],L^2(\Omega,{\mathbb R}^{n+1}))$ denote the solutions  of the system equation (\ref{pi14})  corresponding to  $u^{\varepsilon}(\cdot)$ and $u^o(\cdot)$, respectively.  Consider the limit 
 \bes
  Z(t) \tri \lim_{\varepsilon\downarrow 0}  \frac{1}{\veps} \Big\{X^{\varepsilon}(t)-X^o(t)\Big\} , \hst t \in [0,T]. 
  \ees  
  
\noi We have the following result characterizing the variational process $\{Z(t): t \in [0,T]\}$.\\

\begin{comment}

\vspace*{1.0cm}

\CDC{Bambos: Please check this as well.} 

\end{comment}

\begin{lemma}
\label {lemma4.1p}
Suppose Assumptions~\ref{NC1} hold. The process $\{Z(t): t \in [0,T]\}$ is an element of the Banach space    $B_{{\mathbb F}_T}^{\infty}([0,T],L^2(\Omega,{\mathbb R}^{n+1}))$ and it  is the unique solution of the variational stochastic differential equation
 \begin{align} 
 dZ(t) &= F_X(t,X^o(t),u_t^o)Z(t)dt + G_X(t,X^o(t),u_t^o; Z(t))d\overline{W}(t) \nonumber \\ 
 &+ F(t,X^o(t), u_t-u_t^{o})dt + G(t,X^o(t), u_t-u_t^{o})d\overline{W}(t) , \hst Z(0)=0. \label{eq9}   
 \end{align} 
where 
\begin{align}
F(t,X, u-u^o) \tri \sum_{i=1}^N F(t,X^o,u^{-i, o}, u^i-u^{i,o}), \hso G(t,X^o, u-u^o) \tri  \sum_{i=1}^N G(t,X^o,u^{-i,o}, u^i-u^{i,o}), \nonumber 
\end{align}
having a continuous modification.
 
  \end{lemma}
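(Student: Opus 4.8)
The plan is to first establish well-posedness of the candidate variational equation~(\ref{eq9}) and then to show that the difference quotients converge to its solution. The starting observation is that, because we use relaxed strategies, the control enters the coefficients only through the product measure $\times_{i=1}^N u_t^i$, so along the convex perturbation $u_t^{i,\varepsilon}=u_t^{i,o}+\varepsilon(u_t^i-u_t^{i,o})$ the map $\varepsilon\mapsto \times_{i=1}^N u_t^{i,\varepsilon}$ is a polynomial in $\varepsilon$ of degree $N$. Differentiating $\prod_{i}(u_t^{i,o}+\varepsilon(u_t^i-u_t^{i,o}))$ at $\varepsilon=0$ produces the first-order term $\sum_{j}(u_t^j-u_t^{j,o})\times\bigl(\times_{i\neq j}u_t^{i,o}\bigr)$, and integrating $F$ and $G$ against this measure yields precisely the inhomogeneous terms $F(t,X^o,u-u^o)=\sum_{i} F(t,X^o,u^{-i,o},u^i-u^{i,o})$ and $G(t,X^o,u-u^o)$ defined in the statement, the remaining contributions being $O(\varepsilon^2)$.

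Next I would treat (\ref{eq9}) as an affine SDE for $Z$. Under Assumptions~\ref{NC1}, {\bf (C1)--(C5)}, the homogeneous coefficients $F_X(t,X^o,u^o)$ and the linear map $\eta\mapsto G_X(t,X^o,u^o;\eta)$ are essentially bounded: the only component of $G_X$ that is not uniformly bounded is the one differentiating the factor $\Lambda h^* D^{-\frac12}$ in the $x$-direction, which grows like $\Lambda^o$; but by Lemma~\ref{lemma3.1}{\bf (1)} we have ${\mathbb P}$-$\mathrm{ess\,sup}$ boundedness of $\Lambda^o$ on $[0,T]$, so this factor is ${\mathbb P}$-essentially bounded. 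Likewise the inhomogeneous terms lie in $L_{{\mathbb F}_T}^2$ by the linear-growth bounds {\bf (A2),(A4),(A5')} together with the $L^p$-bounds on $\Lambda^o$. Hence a standard Banach fixed-point / Gronwall argument, using the It\^o isometry and Doob's maximal inequality, gives a unique solution $Z\in B_{{\mathbb F}_T}^{\infty}([0,T],L^2(\Omega,{\mathbb R}^{n+1}))$ admitting a continuous modification.

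I would then prove convergence of $Z^\varepsilon(t)\tri \varepsilon^{-1}(X^\varepsilon(t)-X^o(t))$ to $Z$. Subtracting the integral forms of (\ref{pi14}) for $u^\varepsilon$ and $u^o$ and dividing by $\varepsilon$, I would write the state-difference of the drift as $\bigl(\int_0^1 F_X(t,X^o+\theta(X^\varepsilon-X^o),u^\varepsilon)\,d\theta\bigr)Z^\varepsilon$ (and analogously for the diffusion), so that $Z^\varepsilon$ solves an affine equation of the same shape as (\ref{eq9}) but with coefficients evaluated along the perturbed path and with an extra $O(\varepsilon)$ remainder arising from the quadratic-and-higher control terms. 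The difference $Z^\varepsilon-Z$ then satisfies an affine SDE driven by the mismatch of these coefficients; applying Doob's and Burkholder--Davis--Gundy inequalities and Gronwall's lemma yields
\bes
\sup_{t\in[0,T]}{\mathbb E}\,|Z^\varepsilon(t)-Z(t)|_{{\mathbb R}^{n+1}}^2 \leq C\,\rho(\varepsilon),
\ees
with $\rho(\varepsilon)\to 0$ as $\varepsilon\downarrow 0$. Passing to the limit uses (i) the strong convergence $X^\varepsilon\buildrel s\over\longrightarrow X^o$ from Lemma~\ref{lemma3.1}{\bf (2)}, (ii) the continuity of $F_X,G_X$ in $x$ from {\bf (C1),(C4)}, and (iii) the dominated convergence theorem.

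The hard part will be controlling the two features peculiar to the augmented system. First, the diffusion coefficient $G$ carries the multiplicative factor $\Lambda$, so $G_X$ has linear growth in the state and the a priori estimates are not of the standard bounded-coefficient type; the resolution is to exploit the $L^\infty$ and $L^p$ bounds on $\Lambda$ from Lemma~\ref{lemma3.1}{\bf (1)}, which allow me to treat all $\Lambda$-dependent factors as essentially bounded multipliers and to supply the required integrable domination. Second, the product-measure structure of the relaxed controls means one must verify that the quadratic-and-higher terms in $\varepsilon$ are genuinely $o(\varepsilon)$ in the $B_{{\mathbb F}_T}^{\infty}$ norm; this again follows from the uniform moment bounds on $\Lambda^\varepsilon,\Lambda^o$ and the boundedness of $h$ and the linear growth of $f,\sigma$ along the segment joining $X^o$ and $X^\varepsilon$, so that every cross term vanishes in the limit.
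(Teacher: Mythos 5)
Your proposal is correct and follows essentially the same route as the paper: the same a priori bounds on $\Lambda^o$ from Lemma~\ref{lemma3.1} are used to tame the unbounded ($\Lambda$-proportional) factor in $G_X$, and the convergence of the difference quotients $\varepsilon^{-1}(X^\varepsilon-X^o)$ to $Z$ is handled by the same Gronwall plus dominated-convergence argument. The only cosmetic difference is in the well-posedness of the linear equation (\ref{eq9}): the paper exploits its lower-triangular structure, solving first for the $x$-component $Z^2$ and then for the $\Lambda$-component $Z^1$ via random transition operators and variation of constants (which makes the continuous modification immediate from the continuity of the driving semimartingale $\eta$), whereas you run a fixed-point/Gronwall argument on the full affine system; both are standard and equivalent here.
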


\begin{proof}  The derivation utilizes the statements of Lemma~\ref{lemma3.1}.
\noi Note that for $ t \in (0,T]$, (\ref{eq9}) is a linear stochastic differential equation. Define the component vectors of (\ref{eq9}) by
\begin{align}
 Z \tri Vector\{ {Z}^{1}, {Z}^{2}\}. \nonumber 
\end{align}
Then 
 \begin{align} 
 dZ^1(t) &= Z^1(t) h^*(t,x^o(t),u_t^o) D^{-1}(t)dy(t) + \Lambda^o(t)  h_x^*(t,x^o(t),u_t^o; Z^2(t))  D^{-1}(t)dy(t) \nonumber \\ 
 &+\Lambda^o(t) h^*(t,x^o(t),u_t-u_t^{o})  D^{-1}(t)dy(t) , \hst Z^1(0)=0. \label{eq9a}  \\ \nonumber \\
 dZ^2(t) &= f_x(t,x^o(t),u_t^o)Z^2(t)dt + \sigma_x(t,x^o(t),u_t^o; Z^2(t))dW(t) \nonumber \\ 
 &+ f(t,x^o(t),u_t-u_t^{o})dt + \sigma(t,x^o(t),u_t-u_t^{o})dW(t) , \hst Z^2(0)=0. \label{eq9b}   
 \end{align} 
 Let $\{Z_h^2(t): t \in [0,T]\}$ denote the homogenous part  of (\ref{eq9b}) given by
\begin{align}
dZ_h^2(t) = f_x(t,x^o(t),u_t^{o})Z_h^2(t) dt  +   \sigma_x(t,x^o(t),u_t^{o}; Z_h^2(t))d W(t), \hso
Z_h^2(s) = \zeta^2 , \hso t \in [s, T]. 
\end{align}
By Assumptions~\ref{NC1} and  Lemma~\ref{lemma3.1} there   is a unique solution $\{Z_h^2(t): t \in [s,T]\}$ given by 
\bes
Z_h^2(t) = \Psi^2(t,s) \zeta^2, \hst  t \in [s,T],
\ees
 where $\Psi^2(t,s),  t \in [s, T]$ is the random $\{{\mathbb F}_{0,t}: t \in [0,T]\}
-$adapted  transition operator for the homogenous system. Since the derivatives of $f$ and $\sigma$ with respect to the state are uniformly bounded, the transition operator   $\Psi^2(t,s),  t \in [s, T]$ is uniformly  ${\mathbb P}-$a.s. bounded (with values in the space of $n\times n$ matrices). \\
 \noi  Consider now the non homogenous stochastic differential equation (\ref{eq9b}), then its    solution is   given by 
  \begin{eqnarray}  
 Z^2(t) =\int_{0}^t \Psi^2(t,s)d\eta^2(s), \hst t \in [0,T],
  \label{eq9c} 
 \end{eqnarray} 
 where $\{\eta^2(t):t \in [0,T]\}$ is the semi martingale  given by the following stochastic differential equation
\begin{align} 
 d\eta^2(t) = & f(t,x^o(t),u_t-u_t^{o})dt  \nonumber \\
 &+  \sigma(t,x^o(t), u_t-u_t^{o})d W(t), \hst \eta^2(0) = 0, \hso t \in (0,T].\label{eq9d}
 \end{align} 
 Note that $ \{\eta^2(t): t \in [0,T] \}$ is a continuous  square integrable   $\{{\mathbb F}_{0,t}: t \in [0,T]\}-$adapted  semi martingale.  The fact that it has continuous modification  follows directly from the  representation (\ref{eq9c}) and the continuity of the semi martingale $\{\eta^2(t) : t \in [0,T]\}$. \\
Similarly, let $\{Z_h^1(t): t \in [0,T]\}$ denote the homogenous part of (\ref{eq9a})  given by
\begin{align}
dZ_h^1(t) = Z_h^1(t)  h^*(t,x^o(t),u_t^o)D^{-1}(t) dy(t),      \hso
Z_h^1(s) = \zeta^1 , \hso t \in [s, T]. 
\end{align}
By Assumptions~\ref{NC1} and  Lemma~\ref{lemma3.1} there   is a unique solution $\{Z_h^1(t): t \in [s,T]\}$ given by 
\bes
Z_h^1(t) = \Psi^1(t,s) \zeta, \hst  t \in [s,T],
\ees
 where $\Psi^1(t,s),  t \in [s, T]$ is the random $\{{\mathbb F}_{0,t}: t \in [0,T]\}
-$adapted  transition operator for the homogenous system. Since  $h$ is uniformly bounded, the transition operator   $\Psi^1(t,s),  t \in [s, T]$ is uniformly  ${\mathbb P}-$a.s. bounded. \\
 \noi  Consider now the non homogenous stochastic differential equation (\ref{eq9a}), then its    solution is   given by 
  \begin{eqnarray}  
 Z^1(t) =\int_{0}^t \Psi^1(t,s)d\eta^1(s), \hst t \in [0,T],
  \label{eq9aa} 
 \end{eqnarray} 
 where $\{\eta^1(t):t \in [0,T]\}$ is the martingale  given by the following stochastic differential equation
\begin{align} 
 d\eta^1(t) = &     \Lambda^o(t) h_{x}^*(t,x^o(t),u^{o}; Z^2(t))  D^{-1}(t)dy(t)  \nonumber \\
 &+ \Lambda^o(t)  h^*(t,x^o(t),u_t-u_t^{o}) D^{-1}(t) dy(t), \hst \eta^1(0) = 0, \hso t \in (0,T].\label{eq9aaa}
 \end{align} 
Note  that  
 \begin{align} 
 {\mathbb E} |\eta^1(t)|^2 \leq   &  2  \:   {\mathbb E} \int_{0}^t  |\Lambda^o(t)|^2  |D^{-\frac{1}{2}}(s) h_{x}(s,x^o(s),u_s^{o})|_{{\mathbb R}^k}^2   |Z^2(s)|_{{\mathbb R}^n}^2  ds  \nonumber \\
 &+ 2  \:   {\mathbb E} \int_{0}^t  |\Lambda^o(t)|^2  |D^{-\frac{1}{2}}(s) h(s,x^o(s),u_t-u_t^{o})|_{{\mathbb R}^k}^2    ds, \nonumber \\
 \leq   &  2 C_1 \:   {\mathbb E} \int_{0}^t  |\Lambda^o(t)|^2   |Z^2(s)|_{{\mathbb R}^n}^2  ds + 2 C_2 \:   {\mathbb E} \int_{0}^t  |\Lambda^o(t)|^2  ds,
  \label{eq9aaa}
 \end{align} 
 where the last inequality follows from Assumptions~\ref{NC1}, {\bf (C4), (C5)}. Since by Lemma~\ref{lemma3.1}, {\bf (2)} $\Lambda^u \in L^\infty (\Omega, {\mathbb F}_T, {\mathbb P}; {\mathbb R})$, and by the previous calculations $Z^2 \in B_{ {\mathbb F}_T}^\infty ([0,T], L^2(\Omega, {\mathbb R}^n)$,   then from (\ref{eq9aaa}) we deduce that $\eta^1 \in B_{ {\mathbb F}_T}^\infty ([0,T], L^2(\Omega, {\mathbb R}))$, and  $\{\eta^1(t): t \in [0,T] \}$ is a continuous  square integrable   $\{{\mathbb F}_{0,t}: t \in [0,T]\}-$adapted martingale.  By invoking (\ref{eq9aa}) we also obtain that $Z^1 \in B_{ {\mathbb F}_T}^\infty ([0,T], L^2(\Omega, {\mathbb R}))$.\\
 Putting together $Z_h \tri Vector \{Z_h^1, Z_h^2\}, \eta \tri Vector\{\eta^1, \eta^2\}$ then for $ t \in (0,T]$,  the homogenous part of (\ref{eq9}) is a linear stochastic differential equation  satisfying

\begin{align}
dZ_h(t) = F_X(t,X^o(t),u_t^{o})Z_h(t) dt  +   G_X(t,X^o(t),u_t^{o}; Z_h(t))d \overline{W}(t), \hso
Z_h(s) = \zeta , \hso t \in [s, T]. 
\end{align}
and by the properties of $\{Z_h^1(t), Z_h^2(t):  t \in [s, T]\}$ there   is a unique solution $\{Z_h(t): t \in [s,T]\}$ given by 
\bes
Z_h(t) = \Psi(t,s) \zeta, \hst  t \in [s,T],
\ees
 where $\Psi(t,s)$  constructed from  $\{\Psi^1(t, s), \Psi^2(t,s)\},   t \in [s, T]$ is the random $\{{\mathbb F}_{0,t}: t \in [0,T]\}
-$adapted  transition operator for the homogenous system of (\ref{eq9}). \\
 The solution of the non homogenous stochastic differential equation (\ref{eq9}),    is given by 
  \begin{eqnarray}  
 Z(t) =\int_{0}^t \Psi(t,s)d\eta(s), \hst t \in [0,T],
  \label{eq10} 
 \end{eqnarray} 
 where $\{\eta(t):t \in [0,T]\}$ is the semi martingale  given by the following stochastic differential equation
\begin{align} 
 d\eta(t) = & F(t,X^o(t),u_t-u_t^{o})dt  \nonumber \\
 &+  G(t,X^o(t),u_t-u_t^{o})d\overline{W}(t), \hst \eta(0) = 0, \hso t \in (0,T].\label{eq11}
 \end{align} 
 By the properties of $\{\eta^1, \eta^2\}$ then  $ \Big\{\eta(t) \equiv Vector\{ \eta^1(t), \eta^2(t)\} : t \in [0,T] \Big\}$ is a continuous  square integrable   $\{{\mathbb F}_{0,t}: t \in [0,T]\}-$adapted  semi martingale.  The fact that it has continuous modification  follows directly from the  representation (\ref{eq10}) and the continuity of martingale and semi martingale $\{\eta^1(t), \eta^2(t) : t \in [0,T]\}$. \\
By constructing  the difference  $\tilde{Z}^\eps(t) \tri \frac{1}{\eps}\Big(X^\varepsilon(t)-X^o(t)\Big)- Z(t)$  and then utilizing Assumptions~\ref{NC1}, and Lemma~\ref{lemma3.1},  it can be shown that in the limit, as $\varepsilon \longrightarrow 0$, $\tilde{Z}^\eps$ converges to zero in  $B_{{\mathbb F}_T}^{\infty}([0,T],L^2(\Omega,{\mathbb R}^{n+1}))$.

\end{proof}

 \begin{remark}
 \label{not}
 Note that  
\begin{align}
& G_X(t,X,u; Z)~d \overline{W} \equiv \sum_{j=1}^{k+m} G_X^{(j)}(t,X, u)Zd \overline{W}_j, \nonumber \\
 & G(t,X, u-\bar{u})~d\overline{W} \equiv \sum_{j=1}^{k+m} G^{(j)}(t,X, u-\bar{u})d\overline{W}_j . \nonumber  
\end{align}
where $G^{(j)}$ is the $jth$ column of $G$, and $G_X^{(j)}$ is the derivatives of $G^{(j)}$ with respect to $X$, $j=1,2, \ldots,k+m$.\\
\end{remark}

 %%% Short
 \begin{comment}
Notice that the variational stochastic differential equation (\ref{eq9}) does not involve derivatives of $u(\cdot)$ with respect to $y(\cdot)$ because our augmented system state is $X=(\Lambda,x)$ with $y(\cdot)$ a fixed Brownian motion processes. However, if we were to consider the team game under the original probability measure ${\mathbb P}^u$, then the augmented state is $(x, y)$, and since $u(\cdot)$ depends on $y(\cdot)$, then the variational equation will also involve derivatives of $u(\cdot)$ with respect to $y(\cdot)$. This makes the derivation of the minimum principle very difficult unless one imposes additional conditions to avoid this technicality as in \cite{charalambous-ahmedPIS_2012}. This is one of the reasons that makes the current Girsanov approach  very natural. \\

\end{comment}

Before we give the main theorems we introduce the augmented Hamiltonian system of equations corresponding to (\ref{pi14}) and (\ref{pi15}).\\
\noi Define the Hamiltonian of the augmented system  
\bes
 {\cal  H}^{rel}: [0, T]  \times  {\mathbb R}^{n+1} \times   {\mathbb R}^{n+1} \times {\cal L}( {\mathbb R}^{m+k}, {\mathbb R}^{n+1})\times {\cal M}_1( {\mathbb A}^{(N)}) \longrightarrow {\mathbb R}
\ees  
   \begin{align}
    {\cal H}^{rel} (t,X,\Psi,Q,u) \tri    \langle F(t,X,u),\Psi \rangle + tr \Big(Q^* G(t,X,u)\Big)
     + L(t,X,u),  \hst  t \in  [0, T]. \label{ps1}
    \end{align}
    \noi For any $u \in {\mathbb U}_{rel}^{(N)}[0,T]$, the adjoint process $(\Psi, Q) \in    
L_{{\mathbb F}_T}^2([0,T], {\mathbb R}^{n+1}) \times L_{{\mathbb F}_T}^2([0,T],  {\cal L}( {\mathbb R}^{m+k}, {\mathbb R}^{n+1}))$  satisfies the following backward stochastic differential equations
\begin{align} 
d\Psi (t)  &= -F_X^{*}(t,X(t),u_t)\Psi (t)  dt - V_{Q}(t) dt -L_X(t,X(t),u_t) dt + Q(t) d\overline{W}(t), \hst t \in [0,T),    \nonumber    \\ 
&=- {\cal H}_X^{rel} (t,X(t),\Psi(t),Q(t),u_t) dt + Q(t)  d\overline{W}(t),  \hst \Psi(T)=  \Phi_X(X(T)) , \label{ps2}  
 \end{align}
  where  $V_{Q} \in  L_{{\mathbb F}_T}^2([0,T], {\mathbb R}^{n+1})   $ is   given by  
\bes  
  \langle V_{Q}(t),\zeta\rangle = tr \Big(Q^*(t) G_X(t,X(t),u_t; \zeta)\Big), \hst t \in [0,T]. 
\ees  
Clearly,
\bea
V_{Q}(t) = \sum_{j=1}^{k+m} \Big( G_X^{(j)}(t,X(t),u_t)\Big)^* Q^{(j)}(t), \hst t \in [0,T]. \label{rep1}
\eea
 The state process satisfies the stochastic differential equation (\ref{pi14}) expressed in terms of the Hamiltonian as follows.
\begin{align}
dX(t)         = {\cal H}_\Psi^{rel} (t,X(t),\Psi(t),Q(t),u_t)     dt + G(t,X(t),u_t)  d\overline{W}(t), \hso
        X(0) =  X_0, \hso t \in (0,T]. \label{ps3}  
 \end{align}
 
Next, we state the first set of  optimality conditions for an element $u^o \in {\mathbb U}_{rel}^{(N)}[0,T]$ with the corresponding augmented solution $X^o \equiv (\Lambda^o, x^o)$  to be team optimal. \\

\begin{comment}

\vspace*{1.0cm}

\CDC{Bambos: Can you check the derivation, especially part (3) of the next theorem?}

\end{comment}

\begin{theorem}(Team Optimality Necessary Conditions under Reference Measure ${\mathbb P}$) \\
\label{thmps1}
 Consider Problem~\ref{problemfp1} under Assumptions~\ref{NC1}.\\
 \noi {\bf Necessary Conditions.} For  an element $ u^o \in {\mathbb U}_{rel}^{(N)}[0,T]$ with the corresponding solution $X^o \in B_{{\mathbb F}_T}^{\infty}([0,T], L^2(\Omega,{\mathbb R}^{n+1}))$ to be team optimal, it is necessary  that 
the following hold.

\begin{description}

\item[(1)]  There exists a semi martingale  $m^o \in {\cal SM}_0^2([0,T], {\mathbb R}^{n+1})$ with the intensity process $({\Psi}^o,Q^o) \in  L_{{\mathbb F}_T}^2([0,T],{\mathbb R}^{n+1})\times L_{{\mathbb F}_T}^2([0,T],{\cal L}({\mathbb R}^{m+k},{\mathbb R}^{n+1}))$.
 
 \item[(2) ]  The variational inequality
  is satisfied:

\begin{align}     \sum_{i=1}^N {\mathbb  E} \Big\{ \int_0^T    {\cal H}^{rel} (t,X^o(t),\Psi^o(t), Q^{o}(t), u_t^{-i,o},u_t^i-u_t^{i,o}) dt \Big\}\geq 0, \hso \forall u \in {\mathbb U}_{rel}^{(N)}[0,T]. \label{ps4}
\end{align}

Moreover, (\ref{ps4}) holds if and only if the following variation inequality holds. 
\begin{align}    
{\mathbb  E} \Big\{ \int_0^T  {\cal H}^{rel} (t,X^o(t),\Psi^o(t), Q^{o}(t), u_t^{-i, o},u_t^i-u_t^{i,o})dt \Big\}\geq 0, \hso \forall u^i \in {\mathbb U}_{rel}^i[0,T], i=1, 2, \ldots, N. \label{ps44}
\end{align}

\item[(3)]  The process $({\Psi}^o,Q^o) \in  L_{{\mathbb F}_T}^2([0,T],{\mathbb R}^{n+1})\times L_{{\mathbb F}_T}^2([0,T],{\cal L}({\mathbb R}^{m,+k}{\mathbb R}^{n+1}))$ is a unique solution of the backward stochastic differential equation (\ref{ps2}) such that $u^o \in {\mathbb U}_{rel}^{(N)}[0,T]$ satisfies  the  point wise almost sure inequalities with respect to the $\sigma$-algebras ${\cal G}_{0,t}^{y^i}   \subset {\mathbb F}_{0,t}$, $ t\in [0, T], i=1, 2, \ldots, N:$ 

\begin{align} 
  {\mathbb E} \Big\{   {\cal H}^{rel}(t,X^o(t),  &\Psi^o(t),Q^o(t),u_t^{-i, o}, \nu^i)   |{\cal G}_{0, t}^{y^i} \Big\}   \geq  {\mathbb E} \Big\{   {\cal H}^{rel}(t,X^o(t),  \Psi^o(t),Q^o(t),u_t^{o})   |{\cal G}_{0, t}^{y^i} \Big\}, \nonumber \\
&  \forall \nu^i \in {\cal M}_1({\mathbb A}^i),  a.e. t \in [0,T], {\mathbb P}|_{{\cal G}_{0,t}^{y^i}}- a.s., i=1,2,\ldots, N   \label{ph1} 
\end{align} 

\end{description}

%%%% Short

\begin{comment}
\noi {\bf Sufficient Conditions.} Let $(X^o(\cdot), u^o(\cdot))$ denote an admissible state and decision pair and let $\Psi^o(\cdot)$ the corresponding adjoint processes. \\
   Suppose the following conditions hold.
   
\begin{description}

\item[(C6)]  ${\cal H}^{rel} (t, \cdot,\Psi,Q,\nu),   t \in  [0, T]$ is convex in $X \in {\mathbb R}^{n+1}$; 
 
 \item[(C7)] $\Phi(\cdot)$ is convex in $X \in {\mathbb R}^{n+1}$.  

\end{description}

\noi Then $(X^o(\cdot),u^o(\cdot))$ is optimal if it satisfies (\ref{ph1}).
  
\noi For a generic information structure ${\cal I}_{0,t}^i, t \in [0,T]$ available to each DM $i$, the conditioning in (\ref{ph1}) is taken with respect to this information structure, for $i=1, 2, \ldots, N$.    

\end{comment}
\end{theorem}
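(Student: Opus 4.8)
The plan is to derive the three necessary conditions by combining a convex (Gateaux) variation of the pay-off with the duality furnished by the adjoint backward stochastic differential equation, and then to localize the resulting integral inequality into the pointwise conditional Hamiltonian inequalities. First I would exploit the convexity of each $\mathbb{U}_{rel}^i[0,T]$: fixing a team-optimal $u^o$ and an arbitrary $u \in \mathbb{U}_{rel}^{(N)}[0,T]$, the perturbation $u_t^{i,\epsilon} \tri u_t^{i,o} + \epsilon(u_t^i - u_t^{i,o})$ is admissible for $\epsilon \in [0,1]$, and optimality gives $J(u^\epsilon) \geq J(u^o)$, so the right Gateaux derivative is nonnegative. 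Using Lemma~\ref{lemma4.1p}, which identifies the variational process $Z \in B_{\mathbb{F}_T}^{\infty}([0,T],L^2(\Omega,\mathbb{R}^{n+1}))$ solving (\ref{eq9}), together with the $C^1$ and growth bounds of Assumptions~\ref{NC1} on $L$ and $\Phi$, I would differentiate (\ref{pi15}) under the expectation to get
\[ \frac{d}{d\epsilon}J(u^\epsilon)\Big|_{\epsilon=0^+} = \mathbb{E}\Big\{\langle \Phi_X(X^o(T)), Z(T)\rangle + \int_0^T \langle L_X(t,X^o(t),u_t^o), Z(t)\rangle\, dt\Big\} + \mathbb{E}\int_0^T L(t,X^o(t),u_t-u_t^o)\,dt \geq 0, \]
where the last term reflects the affine dependence of the relaxed cost $L$ on the control direction.

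Second, I would introduce the adjoint pair $(\Psi^o,Q^o)$. The map carrying each admissible perturbation to the linear functional $Z \mapsto \mathbb{E}\langle \Phi_X(X^o(T)),Z(T)\rangle + \mathbb{E}\int_0^T\langle L_X,Z\rangle\,dt$ is bounded on the Hilbert space ${\cal SM}_0^2([0,T],\mathbb{R}^{n+1})$ of driving semi martingales $\eta$ (cf.\ the representation (\ref{eq10})); the Riesz representation theorem together with the isometry of Theorem~\ref{theorem4.3} then produces a unique intensity pair $(\Psi^o,Q^o) \in L_{\mathbb{F}_T}^2([0,T],\mathbb{R}^{n+1}) \times L_{\mathbb{F}_T}^2([0,T],{\cal L}(\mathbb{R}^{m+k},\mathbb{R}^{n+1}))$, which establishes part~\textbf{(1)} and, by construction, solves the backward equation (\ref{ps2}). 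Equivalently one may postulate (\ref{ps2}) and apply It\^o's rule to $\langle \Psi^o(t),Z(t)\rangle$: after taking expectations the stochastic integrals vanish (integrability coming from $\Lambda^o \in L^\infty$ and $Z \in B^\infty$ by Lemma~\ref{lemma3.1}), the $F_X$- and $G_X$-contributions cancel against the adjoint drift while the quadratic-covariation term is absorbed by $V_Q$, and the boundary value reproduces $\mathbb{E}\langle \Phi_X(X^o(T)),Z(T)\rangle$. Substituting back collapses every $Z$-dependent term and leaves precisely the Hamiltonian variation, yielding (\ref{ps4}):
\[ \sum_{i=1}^N \mathbb{E}\int_0^T {\cal H}^{rel}(t,X^o(t),\Psi^o(t),Q^o(t),u_t^{-i,o},u_t^i-u_t^{i,o})\,dt \geq 0. \]

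Third, the equivalence of (\ref{ps4}) and the per-player inequalities (\ref{ps44}) follows from the additive structure of the sum: summing (\ref{ps44}) over $i$ gives (\ref{ps4}), while conversely choosing a variation acting only in the $i$-th coordinate (taking $u^j = u^{j,o}$ for $j \neq i$) annihilates all but the $i$-th summand. This step is legitimate exactly because the factors $\mathbb{U}_{rel}^i[0,T]$ can be perturbed independently.

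Finally, and this is where the genuine work lies, I would localize (\ref{ps44}) to the pointwise conditional inequality (\ref{ph1}). Since any $\nu^i \in \mathbb{U}_{rel}^i[0,T]$ is ${\cal G}_T^{y^i}$-adapted, the tower property lets me insert a ${\cal G}_{0,t}^{y^i}$-conditional expectation into (\ref{ps44}) and rewrite it as
\[ \mathbb{E}\int_0^T \mathbb{E}\Big\{{\cal H}^{rel}(t,X^o,\Psi^o,Q^o,u_t^{-i,o},\nu_t^i) - {\cal H}^{rel}(t,X^o,\Psi^o,Q^o,u_t^o)\,\Big|\,{\cal G}_{0,t}^{y^i}\Big\}\,dt \geq 0. \]
The hard part is converting this single integral inequality, valid for all admissible $\nu^i$, into an a.e.-in-$t$, $\mathbb{P}|_{{\cal G}_{0,t}^{y^i}}$-a.s.\ inequality over all directions $\nu^i \in {\cal M}_1(\mathbb{A}^i)$. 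The standard device is a contradiction/measurable-selection argument: were the set of $(t,\omega)$ on which some direction strictly violates (\ref{ph1}) of positive $dt\times d\mathbb{P}$ measure, then because the conditional expectation is ${\cal G}_{0,t}^{y^i}$-measurable one could splice together a ${\cal G}_T^{y^i}$-adapted admissible strategy equal to the violating $\nu^i$ on that set and to $u^{i,o}$ off it, forcing the last integral strictly negative and contradicting (\ref{ps44}). Making this rigorous requires a measurable selection of the violating direction and care that the spliced control remains adapted to the player's \emph{own} information ${\cal G}_{0,t}^{y^i}$; this adaptedness constraint is the precise manifestation of decentralization, and the reference-measure formulation, under which each $y^i$ is a fixed Brownian motion so that ${\cal G}_{0,t}^{y^i}$ is independent of $u$, is exactly what renders the construction tractable.
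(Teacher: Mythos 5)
Your proposal is correct and follows essentially the same route as the paper: convex perturbation and Gateaux differentiation of $J$ via the variational process $Z$ of Lemma~\ref{lemma4.1p}, Riesz representation on the Hilbert space ${\cal SM}_0^2([0,T],{\mathbb R}^{n+1})$ to produce the adjoint intensity $(\Psi^o,Q^o)$, reduction to the per-player inequalities by perturbing one coordinate at a time, and localization by splicing ${\cal G}_{0,t}^{y^i}$-adapted controls. The only (immaterial) difference is in the last step, where the paper fixes $\nu^i$ and uses a Lebesgue-differentiation argument on shrinking sets $I_\varepsilon^i\times\Omega_\varepsilon^i$ rather than your contradiction on a positive-measure violating set, which sidesteps the measurable-selection issue you flag.
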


\begin{proof}  
%%% Short
\begin{comment}
 {\bf Necessary Conditions.}
\end{comment} 
  {\bf (1)}. The derivation utilizes the variational equation of  Lemma~\ref{lemma4.1p} and the semi martingale representation stated under Theorem~\ref{theorem4.3}. We describe the initial steps.  Suppose $u^o \in {\mathbb U}_{rel}^{(N)}[0,T]$ is an optimal team decision  and $u \in {\mathbb U}_{rel}^{(N)}[0,T]$ any other admissible decision.  Since ${\mathbb U}_{rel}^{{i}}[0,T]$ is convex $\forall i \in {\mathbb Z}_N$, we have, for any $\varepsilon \in [0,1]$, $ u_t^{i,\varepsilon} \tri u_t^{i,o} + \varepsilon (u_t^i-u_t^{i,o}) \in {\mathbb U}_{rel}^{{i}}[0,T], \forall i \in {\mathbb Z}_N.$ Let $X^\varepsilon (\cdot) \equiv X^{\varepsilon}(\cdot;u^\varepsilon(\cdot)), X^{o}(\cdot)\equiv X^o(\cdot;u^o(\cdot)) \in B_{{\mathbb F}_T}^{\infty}([0,T],L^2(\Omega,{\mathbb R}^{n+1})) $ denote the  solutions of the stochastic system  (\ref{pi14})  corresponding to $u^{\varepsilon}(\cdot)$ and $u^o(\cdot)$, respectively. Since $u^o(\cdot) \in {\mathbb U}_{rel}^{(N)}[0,T]$ is optimal it is clear that 
 \begin{eqnarray}
   J(u^{\varepsilon})-J(u^o) \geq 0, \hst   \forall \varepsilon \in [0,1], \hso \forall u \in {\mathbb U}_{rel}^{(N)}[0,T].  \label{eq19}
    \end{eqnarray}
    
\noi Define the  Gateaux differential  of $J$ at $u^o$ in the direction $u-u^o$ by 
\bes
dJ(u^o,u-u^0) \tri \lim_{ \varepsilon \downarrow 0}   \frac{ J(u^{\varepsilon})-J(u^o)}{\varepsilon} \equiv \frac{d}{d \varepsilon} J(u^\varepsilon)|_{\varepsilon =0}.
\ees                
\noi It can be shown  that 
 \begin{eqnarray} 
 dJ(u^o,u-u^0) = {\cal L}(Z) +  \sum_{i=1}^N {\mathbb E} \int_0^T  L(t,X^o(t),u_t^{-i, o}, u_t^i-u_t^{i,o}) dt \geq 0, \hso \forall u \in {\mathbb U}_{rel}^{(N)}[0,T], \label{eq20} 
 \end{eqnarray}
where ${\cal L}(Z)$ is given by the functional  
\begin{eqnarray} 
{\cal L}(Z) = {\mathbb E}\biggl\{  \int_{0}^{T} L_X(t,X^o(t),u_t^o),Z(t)\ra~ dt + \la\Phi_X(X^o(T)),Z(T)\ra\biggr\}. \label{eq21}
\end{eqnarray} 
\noi  Under measure ${\mathbb P}$, the filtration $\{{\mathbb F}_{0,t}: t \in [0,T]\}$ is generated by the initial state $X(0)$, and the augmented Brownian motion vector $\{(W(t),B(t)): t \in [0,T]\}$, and by Lemma~\ref{lemma4.1p}, the process $Z(\cdot)\in B_{{\mathbb F}_T}^{\infty}([0,T],L^2(\Omega,{\mathbb R}^{n+1}))$ and it is  continuous ${\mathbb P}-$a.s.   \\
Hence,  by Assumptions~\ref{NC1}, it follows that $Z \longrightarrow {\cal L}(Z)$ is a continuous linear functional. Further, by Lemma~\ref{lemma4.1p}, $\eta \longrightarrow Z$ is a continuous linear map from the Hilbert space ${\cal SM}_0^2 [0,T]$  to the space $B_{{\mathbb F}_T}^{\infty}([0,T],L^2(\Omega,{\mathbb R}^{n+1})) $  given by the expression (\ref{eq10}). Thus the composition map  $\eta \longrightarrow Z \longrightarrow {\cal L}(Z) \equiv \tilde{\cal L} (\eta)$ is a continuous linear functional on  ${\cal SM}_0^2 [0,T]$. Then by virtue of  Riesz representation theorem for Hilbert spaces, there exists a semi martingale $m^o \in {\cal SM}_0^2 ([0,T], {\mathbb R}^{n+1})$ with intensity $(\psi^o,Q^o) \in  L_{{\mathbb F}_T}^2([0,T],{\mathbb R}^{n+1})\times L_{{\mathbb F}_T}^2([0,T],{\cal L}({\mathbb R}^{m+k},{\mathbb R}^{n+1}))$ such that

\begin{align} {\cal L}(Z) \tri \tilde{\cal L} (\eta) =& (m^o,\eta)_{{\cal SM}_0^2([0,T], {\mathbb R}^{n+1})} =  \sum_{i=1}^N {\mathbb E} \int_{0}^{T} \la \Psi^o(t), F(t,X^o(t),u^{-i,o}, u_t^i-u_t^{i,o}) \ra dt \nonumber \\
&+\sum_{i=1}^N {\mathbb E} \int_{0}^{T} tr \Big( Q^{o,*}(t) G(t,X^o(t),u^{-i, o},u_t^i-u_t^{i,o})\Big) dt . \label{eq22}
 \end{align}
 This proves  {\bf (1)}.\\

\noi {\bf (2)} Substituting (\ref{eq22}) into (\ref{eq20}) we obtain the following variational equation.
 \begin{align}
 dJ(u^o,u-u^0) =  &  \sum_{i=1}^N {\mathbb E} \int_{0}^{T} \la\Psi^o(t), F(t,X^o(t),u^{-i,o}, u_t^i-u_t^{i,o})\ra dt   \nonumber \\
 &+\sum_{i=1}^N {\mathbb E} \int_{0}^{T} tr \Big(Q^{o,*}(t) G(t,X^o(t),u^{-i,o}, u_t^i-u_t^{i,o})\Big) dt
  \nonumber \\
 & + \sum_{i=1}^N {\mathbb E} \int_0^T L(t,X^o(t),u_t^{-i,o},u_t^i-u_t^{i,o})  dt
  \geq 0, \hst \forall u \in {\mathbb U}_{rel}^{(N)}[0,T]. \label{eq20n}
 \end{align}
By the definition of the Hamiltonian (\ref{ps1}), it follows  that inequality (\ref{eq20n})  is precisely (\ref{ps4}) along with the pair $\{ (\psi^o(t), Q^o(t)): t \in [0,T]\}$. Next, we show that (\ref{ps4}) implies  (\ref{ps44}). 
   Define
\begin{align} \label{eq37c}
g^i(T,\omega) \tri \int_{0}^T {\cal H}^{rel}(t,X^o(t),\Psi^o(t),Q^o(t),u_t^{-i,o},u_t^i-u_t^{i,o})dt,  \hso  \forall u^i \in {\mathbb U}_{rel}^i[0,T],  i=0,1, \ldots,  N.  
   \end{align}
 Suppose for some $i \in {\mathbb Z}_N$, (\ref{ps44}) does  not hold, and let $A^i(T) \tri \{\omega: g^i(T,\omega)<0 \}$. Let $\nu^i$ be any vaguely $\{{\cal G}_{0,t}^{y^i}: t \in [0,T]\}-$adapted $\nu^i \in {\cal M}_1({\mathbb A}^i)$.  We can choose $u_t^i$  in  (\ref{ps4}) as   
\begin{align} 
 u_t^i \tri \left\{ \begin{array}{l} \nu^i ~\mbox{on}~ A^i(T) \\ u_t^{i,o} \: \mbox{outside} \: A^i(T) \end{array} \right. \label{eq37cc}
\end{align}  
 together with $u_t^j =u_t^{j,o}, j \neq i, j \in {\mathbb Z}_N$. Substituting (\ref{eq37cc})  in  (\ref{ps4}) we arrive at   $\int_{A^i(T)} g^i(T,\omega)  \: d{\mathbb P}(\omega) \geq 0$, which contradicts the definition of $A^i(T)$, unless $A^i(T)$ has measure zero. Hence, (\ref{ps4}) implies  (\ref{ps44}). On the other hand, it is clear that (\ref{ps44}) implies (\ref{ps4}). Thus, (\ref{ps4}) and (\ref{ps44}) are equivalent.\\
 
\noi  {\bf (3).} Following the same steps in derivation found in \cite{charalambous-ahmedFIS_Parti2012} for noiseless information structures, we verify that   the process
  $({\Psi}^o,Q^o) \in  L_{{\mathbb F}_T}^2([0,T],{\mathbb R}^{n+1})\times L_{{\mathbb F}_T}^2([0,T],{\cal L}({\mathbb R}^{m,+k}{\mathbb R}^{n+1}))$ is a unique solution of the backward stochastic differential equation (\ref{ps2}).\\
Next, we show (\ref{ph1}).  By using the property of conditional expectation  then

\begin{align} 
  {\mathbb E} \biggl\{   \int_{0}^{T}  {\mathbb  E} \Big\{  {\cal H}^{rel}(t,X^o(t),\Psi^o(t),Q^o(t),u_t^{-i,o},u_t^i-u_t^{i,o})|{\cal G}_{0, t}^{y^i}\Big\}  dt \biggr\}  \geq 0, \hso \forall u^i \in {\mathbb U}_{rel}^{i}[0,T], i \in {\mathbb Z}_N.  \label{eq36a}
\end{align}
\noi Let  $t \in (0,T),$  $\omega \in \Omega$ and $\varepsilon >0$, and  consider the sets $I_{\varepsilon}^i \equiv [t,t+\varepsilon] \subset [0,T]$ and  $\Omega_{\varepsilon}^i (\subset \Omega) \in {\cal G}_{0,t}^{y^i}$  containing $\omega$ such that $|I_{\varepsilon}^i| \rightarrow 0$  and ${\mathbb P}(\Omega_{\varepsilon}^i) \rightarrow 0$ as $\varepsilon \rightarrow 0,$ for $i=1,2, \ldots, N$. For any sub-sigma algebra ${\cal G} \subset {\mathbb F}$, let ${\mathbb P}|_{{\cal G}}$ denote the restriction of the probability measure ${\mathbb P}$ on to the $\sigma$-algebra ${\cal G}.$   For any (vaguely) ${\cal G}_{0, t}^{y^i}-$adapted  $\nu^i \in {\cal M}_1({\mathbb A}^i),$ construct
\bea
 u_t^i = \begin{cases}  \nu^i & ~ \mbox{for}~~ (t,\omega) \in I_{\varepsilon}^i \times \Omega_{\varepsilon}^i  \\   u_t^{i,o} & \mbox{ otherwise}           \end{cases} \hst i=1,2, \ldots, N. \label{cc1}
 \eea
 Clearly, it follows from the above construction that $u^i \in {\mathbb U}_{rel}^i[0,T].$  Substituting  (\ref{cc1})  in (\ref{eq36a}) we obtain the following inequality
\begin{align}
 \int_{\Omega_{\varepsilon}^i\times I_{\varepsilon}^i}  {\mathbb E} \Big\{  {\cal H}^{rel}(t,X^o(t),\Psi^o(t), &Q^o(t),u_t^{-i,o},\nu^i-u_t^{i,o})|{\cal G}_{0,t}^{y^i}\Big\}dt
   \geq  0, \nonumber \\
&    \forall \nu^i \in {\cal M}_1({\mathbb A}^i),  a.e. t \in [0,T], {\mathbb P}|_{{\cal G}_{0,t}^{y^i}}- a.s., \hso i=1,2,\ldots,N.\label{eq37}
   \end{align}
   Letting $|I_{\varepsilon}^i|$ denote the Lebesgue  measure of the set $I_{\varepsilon}^i$ and dividing the above expression  by the product measure ${\mathbb P}(\Omega_{\varepsilon}^i)|I_{\varepsilon}^i|$ and letting $\varepsilon \rightarrow 0$ we arrive at the following inequality.
\begin{align}    {\mathbb E} & \Big\{  {\cal H}^{rel}(t,X^o(t),\Psi^o(t),Q^o(t),u_t^{-i,o},\nu^i)|{\cal G}_{0,t}^{y^i}\Big\}
 \geq   {\mathbb E} \Big\{ {\cal H}^{rel}(t,X^o(t),\Psi^o(t),Q^o(t),u_t^{-i,o},u_t^{i,o})|{\cal G}_{0,t}^{y^i}\Big\}, \nonumber \\
 &  \forall \nu^i \in {\cal M}_1({\mathbb A}^i),  a.e. t \in [0,T], {\mathbb P}|_{{\cal G}_{0,t}^{y^i}}- a.s., i=1,2,\ldots,N. \label{eq37ccc}
\end{align}
This  completes the proof  of {\bf (3)}.

\end{proof}

%%% Short
\begin{comment}

\begin{remark}(Comments on Theorem~\ref{thmps1})
\begin{description}
\item[(1)] The convexity conditions {\bf (C6), (C7)} are not very natural, because they are described in terms of the augmented vector $X$. Shortly, we will express the optimality conditions under the original probability measure ${\mathbb P}^u$, and the convexity conditions in terms of a new  Hamiltonian   and $\varphi(\cdot)$, with respect to $x$.

\item[(2)]  The important point to be made regarding Theorem~\ref{thmps1}, compared to the analogous one  in \cite{charalambous-ahmedPIS_2012} for regular strategies is the following. In \cite{charalambous-ahmedPIS_2012}, we imposed strong conditions so that 1) the state and observation equations, (\ref{pp3}), (\ref{pp4}) have  strong solution adapted to the filtration generated by $(x(0), \{W(t), B(t): 0 \leq t \leq T\})$, by assuming  a Lipschitz continuity on the set of admissible strategies $u$, and 2) the set of admissible regular strategies ${\mathbb U}_{reg}^i[0,T]$  is assumed to be  a dense subset of a set ${\mathbb U}_{na}^i[0,T]$, describe  by functionals of some subset of $\{W(t), B(t): 0 \leq t \leq T\})$, for $i=1, \ldots, N$. Both 1) and 2)  are not very natural; this paper removes both technicalities, 1) and 2),  by invoking the  Girsanov measure transformation approach.  We also extended the results by considering relaxed DM strategies.
\end{description}
\end{remark}

\end{comment}

\noi The following remark can be used to identify the martingale part  of the adjoint process.

\begin{remark}
\label{mart}
Suppose the additional assumptions hold: $f, \sigma, h, \ell, \varphi$ are twice continuously differentiable and uniformly bounded. By an application of the Riesz representation theorem for Hilbert space martingales,  we identify, the martingale term of the adjoint process $M_t = \int_{0}^t  \Psi_X^o(s) G(s,X^o(s)) d\overline{W}(s)$, dual to the first martingale term in the variational equation (\ref{eq9}).  Hence, $Q$ in the adjoint equation, is identified by $ Q(t) \equiv \Psi_X(t) G(t,X(t),u_t)$. 
\end{remark}

{\bf Alternative Hamiltonian System on $\Big(\Omega, {\mathbb F}, \{{\mathbb F}_{0,t}: t \in [0,T]\}, {\mathbb P}\Big)$}\\

An alternative representation of the Hamiltonian system of equations is obtained as follows.\\
\noi  Define the following quantities\footnote{We redefine $q_{11} \equiv q_{11} D^{-\frac{1}{2}}, q_{21}  \equiv q_{21} D^{-\frac{1}{2}}$ without changing the notation.} 
\bea
&\Psi \tri \left[ \begin{array}{c} \Psi_1 \\ \Psi_2 \end{array} \right],\hso 
  Q \tri \left[ \begin{array}{cc} q_{11} & q_{12} \\   q_{21} &  q_{22} \end{array} \right], \nonumber \\
 &\Psi_1 \in {\mathbb R}, \hso \Psi_2 \in {\mathbb R}^n, \hso q_{11} \in {\cal L}({\mathbb R}^k, {\mathbb R}), \hso q_{12} \in {\cal L}({\mathbb R}^m, {\mathbb R}), \hso  q_{21} \in {\cal L}({\mathbb R}^k, {\mathbb R}^n), \hso q_{22} \in {\cal L}({\mathbb R}^m, {\mathbb R}^n). \nonumber
\eea
\noi Then  an equivalent Hamiltonian system of equations  under the reference measure ${\mathbb P}$ is given below.
The Hamiltonian is equivalently expressed as 
\begin{align}
{\cal H}^{rel}(t,X(t),\Psi(t),Q(t),u_t)=&  \la f(t,x(t),u_t), \Psi_2(t)\ra + \Lambda(t) \ell(t,x(t),u_t) \nonumber \\
&+ tr \Big(q_{22}^*(t) \sigma(t,x(t),u_t)\Big) +\Lambda(t)  tr  \Big(q_{11}^* h^*(t,x(t),u_t) \Big) \Big\}  \label{ps6e} \\
\equiv &{\cal H}^{rel}(t,x(t),\Lambda(t), \Psi_2(t),q_{11}(t),q_{22}(t),u_t), \label{pse6}
\end{align}
the adjoint processes $\{\Psi_1, \Psi_2, q_{11}, q_{21}, q_{12}, q_{22}\}$ satisfy the equations
\begin{align}
d \Psi_1 (t) =& -\ell(t,x(t),u_t)dt + q_{12}(t)dW(t) \nonumber \\
&+ q_{11}(t) \Big(dy(t)-h(t,x(t),u_t)dt\Big), \hso \Psi_1(T)=\varphi(x(t)), \hso t \in [0,T), \label{ps6a} \\
d \Psi_2 (t) =&-f_x^*(t,x(t),u_t) \Psi_2(t)dt -\Lambda(t) \ell_x(t,x(t),u_t)dt +  q_{22}(t) dW(t) \nonumber \\
&-\Lambda(t) V_{{q}_{11}}(t)dt
-  V_{{q}_{22}}(t)dt + {q}_{21}(t)dy(t) , \hso \Psi_2(T)=\Lambda(T) \varphi_x(x(T)), \hso t \in [0,T), \label{ps6b}
\end{align}
where $V_{q_{11}}, V_{{q}_{22}}$ are   given by  
\begin{align}
\langle V_{q_{11}}(t),\zeta\rangle =& tr (q_{11}^*(t) h^*(t,x(t),u_t; \zeta))= \la q_{11}(t) h_x(t,x(t),u_t), \zeta \ra, \hso t \in [0,T], \label{ps6c} \\
\la V_{{q}_{22}}(t), \zeta \ra=& tr ({q}_{22}^*(t) \sigma(t,x(t),u_t;\zeta)), \hso  t \in [0,T],  \label{ps6d} 
\end{align}
the state equations are given by 
\begin{align}
d\Lambda(t)= &\Lambda(t) h^*(t,x(t),u_t) D^{-1}(t) dy(t), \hst \Lambda(0)=1, \hso t \in (0,T]. \label{ps6ddd} \\
dx(t)=&f(t,x(t),u_t)dt + \sigma(t,x(t),u_t)dW(t), \hst x(0)=x_0, \hso t \in (0,T], \label{ps6dd} 
\end{align}
and  the conditional  variational  Hamiltonian (equivalent of (\ref{ph1})) is given by  
\begin{align}
{\mathbb E} \Big\{   {\cal H}^{rel}(t,x^o(t),\Lambda^o(t),  &\Psi_2^o(t),q_{11}^o(t),{q}_{22}^o(t), u_t^{-i,o}, \nu^i-u_t^{i,o})   |{\cal G}_{0, t}^{y^i} \Big\}  \geq 0,  \nonumber \\
&  \forall \nu^i \in {\cal M}_1({\mathbb A}^i), \hso a.e. t \in [0,T], \hso {\mathbb P}|_{{\cal G}_{0,t}^{y^i}}- a.s., \hso i=1,2,\ldots, N .  \label{ps6ee} 
\end{align}

The previous system of equations (\ref{ps6e})-(\ref{ps6ee}) describe the maximum principle under the reference probability measure ${\mathbb P}$. \\

Next, we  deduce as expected that the optimality conditions (necessary) for a $u^o \in {\mathbb U}_{rel}^{(N)}[0,T]$ to be  a PbP optimal  can be derived following the  procedure  of Theorem~\ref{thmps1}, and that these conditions are the same to  the conditions of team optimality, with a minor difference.
These results are  stated as a Corollary.

 \begin{corollary} (PbP  Necessary Conditions Under Reference Measure ${\mathbb P}$)\\
 \label{corollaryfs5.1}
   Consider Problem~\ref{problemfp2} under Assumptions~\ref{NC1}.

\noi{\bf Necessary Conditions.}       For  an element $ u^o \in {\mathbb U}_{rel}^{(N)}[0,T]$ with the corresponding solution $x^o \in B_{{\mathbb F}_T}^{\infty}([0,T], L^2(\Omega,{\mathbb R}^{n+1}))$ to be a PbP optimal strategy, it is necessary  that 
the statements of Theorem~\ref{thmps1}, {\bf (1), (3)}  hold and statement {\bf (2)} is replaced by
 \begin{description}
 
 \item[(2') ]  The variational inequalities are satisfied:
\begin{align}    
  {\mathbb E} \int_{0}^{T} {\cal H}^{rel}(t,X^o(t), \Psi^o(t), Q^o(t), u_t^{-i,o},u_t^i-u_t^{i,o} ) dt  
  \geq 0, \hst \forall u^i \in {\mathbb U}_{rel}^i[0,T], \hso \forall i \in {\mathbb Z}_N. \label{fs12}
\end{align}

\end{description} 

%%% Short
\begin{comment}

\noi {\bf Sufficient Conditions.}
Let $(X^o(\cdot), u^o(\cdot))$ denote an admissible state and control pair and let $\Psi^o(\cdot)$ the corresponding adjoint processes. 
   Suppose the  conditions the conditions of Theorem~\ref{thmps1}, {\bf (C6), (C7)} hold.\\
  \noi Then $(X^o(\cdot),u^o(\cdot))$ is PbP  optimal if it satisfies (\ref{ph1}).

\end{comment}

\end{corollary}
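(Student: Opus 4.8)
The plan is to obtain the PbP optimality conditions as a direct specialization of the team optimality derivation in Theorem~\ref{thmps1}, exploiting the fact that PbP optimality (Problem~\ref{problemfp2}) fixes all strategies except one. First I would fix an arbitrary index $i \in {\mathbb Z}_N$ and hold $u^{-i} = u^{-i,o}$ at its PbP-optimal value, so that the only free variable is $u^i \in {\mathbb U}_{rel}^i[0,T]$. The reduced pay-off $v \mapsto \tilde{J}(v,u^{-i,o})$ is then a functional of a single team member's relaxed strategy, and the convex-combination variation $u_t^{i,\varepsilon} \tri u_t^{i,o} + \varepsilon(u_t^i - u_t^{i,o})$ remains admissible by convexity of ${\mathbb U}_{rel}^i[0,T]$. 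The PbP optimality inequality (\ref{fi12}) gives $\tilde{J}(u^{i,\varepsilon},u^{-i,o}) - \tilde{J}(u^{i,o},u^{-i,o}) \geq 0$ for all $\varepsilon \in [0,1]$, which is the exact single-index analogue of (\ref{eq19}).

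Next I would compute the Gateaux differential $d\tilde{J}(u^{i,o}, u^i - u^{i,o})$ exactly as in the proof of Theorem~\ref{thmps1}. The variational process $Z$ of Lemma~\ref{lemma4.1p} now has its forcing term driven only by the single perturbation $F(t,X^o,u^{-i,o},u^i-u^{i,o})$ and $G(t,X^o,u^{-i,o},u^i-u^{i,o})$, so the sum $\sum_{i=1}^N$ appearing in (\ref{eq9}), (\ref{eq20n}) collapses to its $i$th term. Applying the semi martingale representation (Theorem~\ref{theorem4.3}) and the Riesz representation theorem yields the same adjoint pair $(\Psi^o, Q^o)$ solving the backward equation (\ref{ps2}); this establishes statements {\bf (1)} and {\bf (3)} verbatim, since the adjoint equation depends on $u^o$ through $X^o$ and $u^o_t$ alone, not on the direction of variation. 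The only change is that the variational inequality reads
\begin{align}
{\mathbb E}\int_0^T {\cal H}^{rel}(t,X^o(t),\Psi^o(t),Q^o(t),u_t^{-i,o},u_t^i-u_t^{i,o})\,dt \geq 0, \hso \forall u^i \in {\mathbb U}_{rel}^i[0,T], \nonumber
\end{align}
for each fixed $i$, which is precisely (\ref{fs12}).

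The pointwise conditional inequality (\ref{ph1}) then follows by the same localization argument used in part {\bf (3)} of Theorem~\ref{thmps1}: choosing the needle-type variation (\ref{cc1}) supported on $I_\varepsilon^i \times \Omega_\varepsilon^i$ with $\Omega_\varepsilon^i \in {\cal G}_{0,t}^{y^i}$, substituting into (\ref{fs12}), dividing by ${\mathbb P}(\Omega_\varepsilon^i)|I_\varepsilon^i|$, and letting $\varepsilon \to 0$. Here I would note that the key structural point is that the team inequality (\ref{ps44}), proved in Theorem~\ref{thmps1} to be equivalent to the aggregate inequality (\ref{ps4}) via the set-splitting argument on $A^i(T)$, is \emph{identical in form} to the PbP inequality (\ref{fs12}). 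This is the precise sense in which ``team and PbP necessary conditions coincide'': the decoupling of the $N$ variational inequalities that costs effort in the team case (the $A^i(T)$ argument) is automatic for PbP optimality, because PbP fixes $u^{-i,o}$ by definition.

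The main obstacle, and the only substantive subtlety, is verifying that statement {\bf (3)}---the uniqueness and well-posedness of the adjoint backward SDE (\ref{ps2})---genuinely transfers without modification. One must confirm that the adjoint process constructed for the PbP problem is the same object for every index $i$, i.e.\ that a single pair $(\Psi^o,Q^o)$ simultaneously serves all $N$ conditional inequalities. This holds because (\ref{ps2}) is built from $X^o$ and $u^o$, which are common to all $i$, so the backward equation does not depend on which coordinate is being varied; the Riesz representation produces the same semi martingale $m^o \in {\cal SM}_0^2([0,T],{\mathbb R}^{n+1})$ in each case. I would therefore present the corollary's proof as: repeat Theorem~\ref{thmps1} with the sum over $i$ deleted, observe that statements {\bf (1)} and {\bf (3)} are unaffected, and replace {\bf (2)} by the per-index inequality (\ref{fs12}), which is exactly the family of decoupled inequalities (\ref{ps44}) already shown equivalent to the team condition.
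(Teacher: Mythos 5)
Your proposal is correct and follows essentially the same route as the paper, which proves the corollary in one line by noting that one simply repeats the procedure of Theorem~\ref{thmps1} while varying only in the direction $u_t^i-u_t^{i,o}$ with $u_t^{-i}=u_t^{-i,o}$ held fixed. Your additional observations --- that the sum over $i$ collapses to a single term, that the adjoint pair $(\Psi^o,Q^o)$ is common to all indices since it is built from $X^o$ and $u^o$ alone, and that the decoupled inequality (\ref{fs12}) coincides in form with (\ref{ps44}) --- are all consistent with, and merely make explicit, what the paper leaves implicit.
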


 \begin{proof} 
%%%% Short
\begin{comment} 
 {\bf Necessary Conditions.} 
\end{comment} 
 The derivation is based on the procedure of Theorem~\ref{thmps1}, but we only vary in the direction $u_t^i-u_t^{i,o}$, while the rest of the strategies are optimal, $u_t^{-i}=u_t^{-i,o}$.  \\
 
 %%% Short
 \begin{comment}

{\bf Sufficient Conditions. } This is done as in Theorem~\ref{thmps1o}  using the extended state vector $X=(\Lambda,x)$.

\end{comment}

\end{proof}

\noi Clearly, every team optimal strategy for Problem~\ref{problemfp1} is  a PbP optimal strategy for Problem~\ref{problemfp2}, hence   PbP optimality is  a weaker notion than team optimality as expected. By comparing the statements of Theorem~\ref{thmps1} and Corollary~\ref{corollaryfs5.1}, it is clear that  the necessary conditions for team optimality and PbP optimality are equivalent.

%%% Short
\begin{comment}
 Moreover, under the global convexity conditions {\bf (C6), (C7)}, it is not difficult to show that  PbP optimality implies team optimality. 
\end{comment}

\begin{remark}(Centralized Information Structures)\\
\label{remark5.2} 
From Theorem~\ref{thmps1} one can deduce  the optimality conditions of the classical centralized partially observable control problems, that is, when at each $t \in [0,T],  u_t$ is a stochastic kernel measurable with respect  to the centralized noisy information ${\cal G}_{0,t}^{I} \subseteq   {\cal G}_{0,t}^y$.  The  necessary conditions for such a $u^o$ to be optimal are 
 
\begin{eqnarray} 
 {\mathbb E} \Big\{  {\cal H}^{rel}(t,X^o(t),\Psi^o(t),Q^o(t),\nu )|{\cal G}_{0,t}^I\Big\} \geq   {\mathbb E}\Big\{  {\cal H}(t,X^o(t),\Psi(t),Q(t),u_t^o)|{\cal G}_{0,t}^I\Big\}, \nonumber \\
 \forall \nu \in  {\cal M}_1({\mathbb A}^{(N)}), a.e. t \in [0,T], {\mathbb P}|_{{\cal G}_{0,t}^I}-a.s., \label{fs14} 
 \end{eqnarray} 
where $\{X^o(t), \Psi^o(t), Q^o(t): t \in [0,T]\}$ are the solutions of   (\ref{ps2}), (\ref{ps3}).   \\
For centralized information structure ${\cal G}_{0,t}^{I} =   {\cal G}_{0,t}^y$ a maximum principle is derive in \cite{bensoussan1983,elliott-kohlmann1989,elliott-yang1991}, and for risk-sensitive pay-offs in  \cite{charalambous-hibey1996}.
\end{remark}

\begin{comment}
\noi The important point to be made regarding Theorem~\ref{thmps1} and Corollary~\ref{corollaryfs5.1} is that we follow  the derivation developed  in \cite{ahmed-charalambous2012a,charalambous-ahmedFIS_Parti2012,charalambous-ahmedPIS_2012}, but in this paper we utilize the augmented systems $X =(\Lambda, x)$, and relaxed strategies  under the new (reference) probability space $\Big( {\Omega}, {\mathbb F}, {\mathbb F}_T, {\mathbb P}\Big)$. \\

\end{comment}

\subsection{Optimality Conditions Under Original Probability Space-$\Big( {\Omega}, {\mathbb F},  {\mathbb P}^u\Big)$}
\label{ops}
\noi Next, we prepare to express the optimality conditions of Theorem~\ref{thmps1} and Corollary~\ref{corollaryfs5.1} with respect to the original probability space $\Big(\Omega, {\mathbb F}, \{{\mathbb F}_{0,t}: t \in [0,T]\}, {\mathbb P}^u\Big)$, starting with the explicit representation of the optimality conditions under the reference measure ${\mathbb P}$, described by (\ref{ps6e})-(\ref{ps6ee}). \\
Define 
\bea
  {\psi} \tri \Lambda^{-1} \Psi_2, \hso \tilde{ \kappa}_{21} \tri \Lambda^{-1}\Big( q_{21}-\Psi_2h^* D^{-1}\Big), \hso  \tilde{q}_{21} \tri \Lambda^{-1} q_{21}, \hso \tilde{q}_{22}= \Lambda^{-1} q_{22} . \label{ps6}
\eea
\noi Utilizing (\ref{ps6})  the Hamiltonian (\ref{ps6e})  is given by 
\begin{align}
{\cal H}^{rel}(t,X(t),\Psi(t),Q(t),u_t)=& \Lambda^u(t) \Big\{ \la f(t,x(t),u_t), \psi(t)\ra + \ell(t,x(t),u_t) \nonumber \\
&+ tr \Big(\tilde{q}_{22}^*(t) \sigma(t,x(t),u_t)\Big) +tr \Big(q_{11}^* h^*(t,x(t),u_t)\Big) \Big\} . \label{ps6f}
\end{align}
Since the right hand side of (\ref{ps6f}) is multiplied by the Radon-Nikodym derivative $\Lambda^u = \frac{d{\mathbb P}^u}{d{\mathbb P}}$, then as we have done in the previous subsection, we can express the Hamiltonian system of equations under the original probability ${\mathbb P}^u$, using the fact that  under the original  probability measure ${\mathbb P}^u$, the process  $B^u(t)= \int_{0}^t D^{-\frac{1}{2}}(s) \Big(dy(s)- h(s,x(s))\Big)ds$ is an $\{ {\mathbb F}_{0,t}: t \in [0,T]\}$ standard Brownian motion.    

\noi Define the alternative Hamiltonian 
\bes
{\mathbb  H}^{rel}: [0, T]  \times  {\mathbb R}^{n} \times   {\mathbb R}^{n} \times {\cal L}( {\mathbb R}^{k}, {\mathbb R}) \times  {\cal L}( {\mathbb R}^{m}, {\mathbb R}^n)  \times {\cal M}_1({\mathbb A}^{(N)}) \longrightarrow {\mathbb R}
\ees  
   by 
\begin{align}
{\mathbb H}^{rel}(t,x,{\psi},q_{11},\tilde{q}_{22},u) \tri  \la f(t,x,u),{\psi}\ra + \ell(t,x, u)
+ tr\Big( \tilde{q}_{22}^* \sigma(t,x,u)\Big) 
+ tr \Big( q_{11}^* h^{*}(t,x,u)\Big) . \label{ps7}
\end{align}
By invoking the It\^o differential rule to $\psi(\cdot) \tri \Lambda^{-1} (\cdot) \Psi_2(\cdot)$ we can derive the backward stochastic differential equation for $\psi(\cdot)$.  Under the original probability measure $\Big( {\Omega}, {\mathbb F}, \{ {\mathbb F}_{0,t}: t \in [0,T]\}  , {\mathbb P}^u\Big)$, the minimum principe is given is given in terms  of the new Hamiltonian (\ref{ps7}) as follows.\\

\noi The adjoint processes $\{{\psi}, q_{11}, q_{12}, \tilde{q}_{21}, \tilde{q}_{22}\}$ ($\tilde{k}_{21}$ is not included because it is redundant) satisfy the Backward stochastic differential equations
\begin{align}
d \Psi_1(t) =& -\ell(t,x(t),u_t)dt + q_{12}(t) dW(t) + q_{11}(t)D^{\frac{1}{2}}(t)dB^u(t), \hso  \Psi_1(T)= \varphi(x(T)), \hso t \in [0,T), \label{ps8} \\
d {\psi}(t) =& -f_x^*(t,x(t),u_t) {\psi}(t)dt - \ell_x(t,x(t),u_t)dt  -V_{\tilde{q}_{22}}(t)dt -V_{q_{11}}(t) dt  \nonumber \\
& +    \tilde{q}_{22}(t)dW(t)  +  \tilde{k}_{21}(t)D^{\frac{1}{2}}(t)dB^u(t), \nonumber \\
=&- {\mathbb H}_x^{rel}(t,x(t),{\psi}(t),q_{11}(t),\tilde{q}_{22}(t),u_t)  +    \tilde{q}_{22}(t)dW(t) \nonumber \\
& +  \tilde{k}_{21}(t)D^{\frac{1}{2}}(t)dB^u(t),   \hst \psi(T)= \varphi_x(x(T)), \hso t \in [0,T), \label{ps9}
\end{align}
 the state process satisfies the forward stochastic differential equation
\bea
dx(t)= {\mathbb H}_\psi^{rel} (t,x(t),{\psi}(t),q_{11}(t),\tilde{q}_{22}(t),u_t) + \sigma(t,x(t),u_t)dW(t), \hso x(0)=x_0, \hso t \in (0,T]. \label{ps9a}
\eea 
Hence,  under the original probability measure ${\mathbb P}^u$ the conditional  variational  Hamiltonian  is given by  
\begin{align}
{\mathbb E}^{u^o} \Big\{   {\mathbb H}^{rel}(t,x^o(t),  &\psi^o(t),q_{11}^o(t),\tilde{q}_{22}^o(t), u_t^{-i, o}, \nu^i)   |{\cal G}_{0, t}^{y^i} \Big\}  \geq {\mathbb E}^{u^o} \Big\{   {\mathbb H}^{rel}(t,x^o(t),  \psi^o(t),q_{11}^o(t),\tilde{q}_{22}^o(t), u_t^{ o})   |{\cal G}_{0, t}^{y^i} \Big\}  ,  \nonumber \\
&  \forall \nu^i \in  {\cal M}_1({\mathbb A}^i), \hso a.e. t \in [0,T], \hso {\mathbb P}^{u^o}|_{{\cal G}_{0,t}^{y^i}}- a.s., \hso i=1,2,\ldots, N .  \label{ps12} 
\end{align}
Thus, under the original probability measure $\Big( {\Omega}, {\mathbb F}, \{ {\mathbb F}_{0,t}: t \in [0,T]\}  , {\mathbb P}^u\Big)$, the adjoint processes are weakly defined with respect to the filtration $\{ {\mathbb F}_{0,t} \equiv {\cal B}({\mathbb R}^n)\otimes {\cal F}_{0,t}^W \otimes {\cal F}_{0,t}^y: t \in [0,T]\}$. Consequently, we can choose to apply the optimality conditions either under measure ${\mathbb P}$ or ${\mathbb P}^u$. \\

\noi We can now state the following necessary and sufficient conditions of optimality under the original  probability space $\Big( {\Omega}, {\mathbb F},  \{{\mathbb F}_{0,t}: t \in [0,T]\}, {\mathbb P}^u\Big)$.

\begin{theorem}(Team Optimality Conditions under Original Measure ${\mathbb P}^u$) \\
\label{thmps1o}
 Consider Problem~\ref{problemfp1} under Assumptions~\ref{NC1}.
 
\noi {\bf Necessary Conditions.}    For  an element $ u^o \in {\mathbb U}_{rel}^{(N)}[0,T]$ with the corresponding solution $x^o \in B_{{\mathbb F}_T}^{\infty}([0,T], L^2(\Omega,{\mathbb R}^{n}))$ to be team optimal, it is necessary  that 
the following hold.

\begin{description}

\item[(1)]  There exists a semi martingale  $m^o \in {\cal SM}_0^2([0,T], {\mathbb R}^{n+1})$ ($n+1-$ with the intensity process $\{ ({\Psi}_1^o, \psi^o), \left[ \begin{array}{cc} q_{11}^o & q_{12}^o \\ \tilde{q}_{21}^o & \tilde{q}_{22}^o \end{array} \right]\} \in  L_{{\mathbb F}_T}^2([0,T],{\mathbb R}^{n+1})\times L_{{\mathbb F}_T}^2([0,T],{\cal L}({\mathbb R}^{m+k},{\mathbb R}^{n+1}))$.
 
 \item[(2) ]  The variational inequality is satisfied:

\begin{align}     \sum_{i=1}^N {\mathbb  E}^{u^o} \Big\{ \int_0^T   {\mathbb H}^{rel} (t,x^o(t),\psi^o(t), q_{11}^{o}(t),\tilde{q}_{22}^o, u_t^{-i,o},u_t^i-u_t^{i,o}) dt \Big\}\geq 0, \hst \forall u \in {\mathbb U}_{rel}^{(N)}[0,T]. \label{ps4o}
\end{align}
Moreover, (\ref{ps4o}) holds if and only if the following variational inequality holds.

\begin{align}    {\mathbb  E}^{u^o} \Big\{ \int_0^T   {\mathbb H}^{rel} (t,x^o(t),\psi^o(t), q_{11}^{o}(t),\tilde{q}_{22}^o, u_t^{-i,o},u_t^i-u_t^{i,o}) dt \Big\}\geq 0, \hso \forall u^i \in {\mathbb U}_{rel}^i[0,T], \hso \forall i \in {\mathbb Z}_N. \label{ps4oo}
\end{align}

\item[(3)]  The process $ \{ ({\Psi}_1^o, \psi^o), \left[ \begin{array}{cc} q_{11}^o & q_{12}^o \\ \tilde{q}_{21}^o & \tilde{q}_{22}^o \end{array} \right]\}    \in  L_{{\mathbb F}_T}^2([0,T],{\mathbb R}^{n+1})\times L_{{\mathbb F}_T}^2([0,T],{\cal L}({\mathbb R}^{m,+k}{\mathbb R}^{n+1}))$ is a unique solution of the backward stochastic differential equation (\ref{ps8}), (\ref{ps9}) such that $u^o \in {\mathbb U}_{rel}^{(N)}[0,T]$ satisfies  the  point wise almost sure inequalities with respect to the $\sigma$-algebras ${\cal G}_{0,t}^{y^i}   \subset {\mathbb F}_{0,t}$, $ t\in [0, T]:$ 

\begin{align} 
{\mathbb E}^{u^o} \Big\{   {\mathbb H}^{rel}(t,x^o(t),  &\psi^o(t),q_{11}^o(t),\tilde{q}_{22}^o,u_t^{-i,o},\nu^i)   |{\cal G}_{0, t}^{y^i} \Big\} \geq  {\mathbb E}^{u^o} \Big\{   {\mathbb H}^{rel}(t,x^o(t),  \psi^o(t),q_{11}^o(t),\tilde{q}_{22}^o,u_t^{o})   |{\cal G}_{0, t}^{y^i} \Big\} , \nonumber \\
&  \forall \nu^i \in {\cal M}_1({\mathbb A}^i),  a.e. t \in [0,T], {\mathbb P}^{u^o}|_{{\cal G}_{0,t}^{y^i}}- a.s., i=1,2,\ldots, N   \label{ph1o} 
\end{align} 

\noi {\bf Sufficient Conditions.}    Let $(x^o(\cdot), u^o(\cdot))$ denote an admissible state and decision pair and let $\psi^o(\cdot)$ the corresponding adjoint processes.
   Suppose the following conditions hold.
   
\begin{description}

\item[(C6)]  ${\mathbb H}^{rel} (t, \cdot,\psi,q_{11}, \tilde{q}_{22},\nu),   t \in  [0, T]$ is convex in $x \in {\mathbb R}^{n}$; 
 
 \item[(C7)] $\varphi(\cdot)$ is convex in $x\in {\mathbb R}^n$.  

\end{description}

\end{description}

\noi Then $(x^o(\cdot),u^o(\cdot))$ is a relaxed team optimal  if it satisfies (\ref{ph1o}).

\noi For a generic information structure $\{{ I}^i(t): t \in [0,T]\}$ available to each DM $i$, generating a $\sigma-$algebra ${\cal G}^{I^i(t)} \tri \sigma\Big\{I^i(t)\Big\}, t \in [0,T]$, which is not necessarily nested (nonclassical), in the sense that, ${\cal G}^{I^i(t)} \nsubseteq  {\cal G}^{I^i(\tau)},  \tau>t $   (see Remark~\ref{gen-is}) the conditioning in (\ref{ph1o}) is taken with respect to ${\cal G}^{I^i(t)}$, for $i=1, 2, \ldots, N$.       
\end{theorem}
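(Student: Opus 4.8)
The plan is to transfer the optimality conditions already established under the reference measure $\mathbb{P}$ in Theorem~\ref{thmps1} (in its equivalent form (\ref{ps6e})--(\ref{ps6ee})) over to the original measure $\mathbb{P}^u$ by a change-of-variables on the adjoint processes, followed by a verification of the sufficiency direction via a convexity argument. First I would take as given, from Theorem~\ref{thmps1}, the existence of the adjoint pair $(\Psi,Q)$ with components $\{\Psi_1,\Psi_2,q_{11},q_{12},q_{21},q_{22}\}$ satisfying the backward equations (\ref{ps6a})--(\ref{ps6b}) and the conditional variational inequality (\ref{ps6ee}). The key algebraic step is the substitution (\ref{ps6}), namely $\psi\triangleq\Lambda^{-1}\Psi_2$, $\tilde q_{22}\triangleq\Lambda^{-1}q_{22}$, and the corresponding definitions of $\tilde\kappa_{21},\tilde q_{21}$. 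Applying the It\^o differential rule to the product $\psi=\Lambda^{-1}\Psi_2$, using $d\Lambda=\Lambda\,h^*D^{-1}dy$ from (\ref{ps6ddd}) and the quadratic-variation cross terms, yields the backward stochastic differential equation (\ref{ps9}) for $\psi$ driven now by $W$ and the $\mathbb{P}^u$-Brownian motion $B^u(t)=\int_0^t D^{-\frac12}(s)(dy(s)-h(s,x(s))ds)$. The factor $\Lambda^u$ in (\ref{ps6f}) is precisely the Radon--Nikodym derivative $d\mathbb{P}^u/d\mathbb{P}$, so every conditional expectation $\mathbb{E}\{\Lambda^u(\cdot)\,|\,\mathcal{G}_{0,t}^{y^i}\}$ converts, via the abstract Bayes/conditional change-of-measure formula, into $\mathbb{E}^{u^o}\{\cdot\,|\,\mathcal{G}_{0,t}^{y^i}\}$ up to the strictly positive conditional normalizer $\mathbb{E}\{\Lambda^u\,|\,\mathcal{G}_{0,t}^{y^i}\}$, which can be divided out without affecting the inequality. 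This directly delivers the necessary conditions (\ref{ps4o}), (\ref{ps4oo}) and (\ref{ph1o}).

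For the necessity statements (1)--(3) I would therefore argue as follows. Part (1) is immediate: the semimartingale $m^o$ and its intensity from Theorem~\ref{thmps1}(1) transform under (\ref{ps6}) into the asserted intensity process, since the transformation is a bounded, $\{\mathbb{F}_{0,t}\}$-adapted, invertible linear map (boundedness of $\Lambda^u$ and $\Lambda^{u,-1}$ comes from Lemma~\ref{lemma3.1}(1)). Part (2): I would multiply the integrand of the $\mathbb{P}$-variational inequality (\ref{ps4}) by $\Lambda^{u,-1}$ inside the Hamiltonian and recognise the result as $\Lambda^u\,\mathbb{H}^{rel}$ via (\ref{ps6f}); taking $\mathbb{E}^{u^o}\{\cdot\}=\mathbb{E}\{\Lambda^u(T)\cdot\}$ gives (\ref{ps4o}). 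The equivalence of (\ref{ps4o}) and (\ref{ps4oo}) is proved by the same indicator-of-sign-set localization argument used in Theorem~\ref{thmps1}(2): assume the per-player inequality fails on a set $A^i(T)\in\mathcal{G}_{0,T}^{y^i}$ of positive measure, perturb only $u^i$ on $A^i(T)$ and contradict (\ref{ps4o}). Part (3) reproduces the conditional-expectation localization over the sets $I_\varepsilon^i\times\Omega_\varepsilon^i$ exactly as in (\ref{cc1})--(\ref{eq37ccc}), now carried out under $\mathbb{P}^u$.

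For the sufficiency direction I would suppose conditions \textbf{(C6)}, \textbf{(C7)}, i.e.\ convexity of $\mathbb{H}^{rel}(t,\cdot,\psi,q_{11},\tilde q_{22},\nu)$ in $x$ and of $\varphi$ in $x$, and a candidate $(x^o,u^o)$ with adjoint $\psi^o$ satisfying (\ref{ph1o}). The standard argument is to write $J(u)-J(u^o)$ under $\mathbb{P}^u$, add and subtract the Hamiltonian, and use the backward equation (\ref{ps9}) together with It\^o's formula applied to $\langle\psi^o(t),x(t)-x^o(t)\rangle$ to integrate by parts; the martingale terms vanish in expectation, the terminal term is controlled by convexity of $\varphi$ (i.e.\ $\varphi(x(T))-\varphi(x^o(T))\ge\langle\varphi_x(x^o(T)),x(T)-x^o(T)\rangle$), and the running difference is bounded below by $\mathbb{E}^{u^o}\int_0^T[\mathbb{H}^{rel}(t,x,\cdot,u)-\mathbb{H}^{rel}(t,x^o,\cdot,u^o)-\langle\mathbb{H}^{rel}_x(t,x^o,\cdot,u^o),x-x^o\rangle]dt$, which the convexity in $x$ renders nonnegative once the conditional inequality (\ref{ph1o}) is invoked to handle the $u$-direction. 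The main obstacle I anticipate is the sufficiency integration-by-parts step: because the adjoint equation (\ref{ps9}) carries the extra $V_{q_{11}}$ and $V_{\tilde q_{22}}$ correction terms coming from the diffusion-dependence on $x$, one must verify that these terms combine correctly with the cross-variation $tr(\tilde q_{22}^*\sigma)$ and $tr(q_{11}^*h^*)$ contributions so that the Hamiltonian convexity in $x$ genuinely dominates, and that all stochastic integrals are true martingales (not merely local), which relies on the uniform boundedness from Assumptions~\ref{NC1} \textbf{(C2)}, \textbf{(C4)}, \textbf{(C5)} and the $L^p$-bounds on $\Lambda^u$ from Lemma~\ref{lemma3.1}. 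The final remark about nonclassical information structures $\mathcal{G}^{I^i(t)}$ requires no extra work, since the localization in part (3) used only that $\mathcal{G}_{0,t}^{y^i}$ is a sub-$\sigma$-algebra of $\mathbb{F}_{0,t}$, a property shared by any $\mathcal{G}^{I^i(t)}\subseteq\mathbb{F}_{0,t}$.
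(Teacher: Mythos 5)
Your proposal is correct and follows essentially the same route as the paper: the necessary conditions are obtained by the change of variables $\psi=\Lambda^{-1}\Psi_2$, $\tilde q_{22}=\Lambda^{-1}q_{22}$ together with It\^o's rule and the Radon--Nikodym factor $\Lambda^u$ converting $\mathbb{P}$-expectations into $\mathbb{P}^{u}$-expectations (this is exactly the ``discussion prior to the statement'' that the paper's proof invokes), and the sufficiency part is the same convexity argument via It\^o's formula applied to $\langle\psi^o,x-x^o\rangle$, convexity of $\varphi$, convexity of $\mathbb{H}^{rel}$ in $x$ plus linearity in the relaxed control measure, and the conditional inequality (\ref{ph1o}). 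The obstacle you flag about the $V_{q_{11}}$ and $V_{\tilde q_{22}}$ terms is handled in the paper exactly as you anticipate: they are absorbed into $\mathbb{H}^{rel}_x$ in the integration-by-parts identity, so no additional idea is needed.
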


\begin{proof}   {\bf Necessary Conditions.} This follows from the discussion prior to the statement of the Theorem. \\
{\bf Sufficient Conditions.}  Let $u^o \in {\mathbb U}_{rel}^{(N)}[0,T]$ denote a candidate for the optimal team decision and $u \in {\mathbb U}_{rel}^{(N)}[0,T]$
any other decision. Then
 \begin{align} 
 J(u^o) -J(u)=  
    {\mathbb E}^u \biggl\{   \int_{0}^{T}  \Big(\ell(t,x^o(t),u_t^{o})  -\ell(t,x(t),u_t) \Big)  dt  
     + \Big(\varphi(x^o(T)) - \varphi(x(T))\Big)  \biggr\}    . \label{s1}
  \end{align} 
By the convexity of $\varphi(\cdot)$ then 
\bea
\varphi(x(T))-\varphi(x^o(T)) \geq \la \varphi_x(x^o(T)), x(T)-x^o(T)\ra . \label{s2}
\eea
Substituting (\ref{s2}) into (\ref{s1}) yields
 \begin{align} 
 J(u^o) -J(u)\leq  {\mathbb E}^u \Big\{ & \la \varphi_x(x^o(T)),   x^o(T) - x(T)\ra \Big\} \nonumber \\
 + &    {\mathbb E}^u \biggl\{    \int_{0}^{T}  \Big(\ell(t,x^o(t),u_t^{o})  -\ell(t,x(t),u_t) \Big)  dt   \biggr\}    . \label{s3}
  \end{align} 
Applying the Ito differential rule to $\la\psi^o,x-x^o\ra$ on the interval $[0,T]$ and then taking expectation we obtain the following equation.
\begin{align}
{\mathbb E}^u \Big\{ & \la \psi^o(T),   x(T) - x^o(T)\ra \Big\}
 =  {\mathbb E}^u \Big\{  \la \psi^o(0),   x(0) - x^o(0)\ra \Big\} \nonumber \\
& +{\mathbb E}^u \Big\{ \int_{0}^{T}  \la-f_x^{*}(t,x^o(t),u_t^{o})\psi^o(t)dt-V_{\tilde{q}_{22}^o}(t)-V_{q_{11}^o}(t)-\ell_x(t,x^o(t),u_t^{o}), x(t)-x^o(t)\ra dt  \Big\} \nonumber \\
&+ {\mathbb E}^u \Big\{   \int_{0}^{T}  \la \psi^o(t), f(t,x(t),u_t)- f(t,x^o(t),u_t^{o})\ra dt \Big\} \nonumber \\
& + {\mathbb E}^u \Big\{  \int_{0}^{T}     tr \Big(\tilde{q}_{22}^{*,o}(t)\sigma(t,x(t),u_t)-\tilde{q}_{22}^{*,o}(t)\sigma(t,x^o(t),u_t^o\Big)dt \Big\} \nonumber \\
&=  -  {\mathbb E}^u \Big\{ \int_{0}^{T} \la {\mathbb H}_x^{rel}(t,x^o(t),\psi^o(t), q_{11}^o(t),\tilde{q}_{22}^o, u_t^{o}), x(t)-x^o(t)\ra dt \nonumber \\
& + {\mathbb E}^u \Big\{  \int_{0}^{T} \la \psi^o(t), f(t,x(t),u_t)-f(t,x^o(t),u_t^{o})\ra dt \Big\} \nonumber \\
& + {\mathbb E}^u \Big\{  \int_{0}^{T}  tr \Big(\tilde{q}_{22}^{*,o}(t)\sigma(t,x(t),u_t)-\tilde{q}_{22}^{*,o}(t)\sigma(t,x^o(t),u_t^o)\Big)dt \Big\} \label{s4}
\end{align}
Note that $\psi^o(T)=\varphi_x(x^o(T))$. Substituting (\ref{s4}) into (\ref{s3}) we obtain
 \begin{align} 
 J(u^o) -J(u)  \leq &    {\mathbb E}^u \Big\{ \int_{0}^{T}  \Big[ {\mathbb H}^{rel}(t,x^o(t),\psi^o(t), q_{11}^o(t),\tilde{q}_{22}^o(t),u_t^{o}) -   {\mathbb H}^{rel}(t,x(t),\psi^o(t), q_{11}^o(t),\tilde{q}_{22}^o(t), u_t^{})\Big]dt \Big\} \nonumber \\
 -&    {\mathbb E}^u \Big\{ \int_{0}^{T} \la {\mathbb H}_x^{rel}(t,x^o(t),\psi^o(t), q_{11}^o(t),\tilde{q}_{22}^o(t),u_t^{o}), x^o(t)-x(t)\ra dt   \Big\}    . \label{s5}
  \end{align} 
Since by hypothesis $ {\mathbb H}^{rel}$ is convex in $x \in {\mathbb R}^n$ and linear in $\nu \in {\cal M}_1( {\mathbb A}^{(N)})$,  then 
\begin{align}
 {\mathbb H}^{rel} (t,x(t),&\psi^o(t), q_{11}^o(t),\tilde{q}_{22}^o(t),u_t^{}) -   {\mathbb H}^{rel}(t,x^o(t),\psi^o(t), q_{11}^o(t),\tilde{q}_{22}^o(t),u_t^{o})  \nonumber \\
  \geq 
 & \sum_{i=1}^N  {\mathbb H}^{rel}(t,x^o(t),\psi^o(t), q_{11}^o(t),\tilde{q}_{22}^o,u_t^{-i, o}, u^i-u_t^{i,o}) \nonumber \\
 + & \la {\mathbb H}_x^{rel}(t,x^o(t),\psi^o(t), q_{11}^o(t),\tilde{q}_{22}^o(t), u_t^{o}), x(t)-x^o(t)\ra , \hst t \in [0,T] \label{s7}
 \end{align}
 Substituting (\ref{s7}) into (\ref{s5}) yields 
 \begin{align}
  J(u^o) -J(u)  \leq    
 -    {\mathbb E}^u  \Big\{  \sum_{i=1}^N    \int_{0}^{T}  {\mathbb H}^{rel}(t,x^o(t),\psi^o(t), q_{11}^o(t),\tilde{q}_{22}^o(t), u_t^{-i,o}, u_t^i-u_t^{i,o}) dt   \Big\}    . \label{s5a}
  \end{align} 
By     (\ref{ph1o})  
\begin{comment}
states that    ${\mathbb E}^u \Big\{ \la  {\mathbb H}_{u^i}(t,x^o(t),\psi^o(t),q_{11}^o(t),u_t^{o}),u_t^i-u_t^{i,o} \ra |{\cal G}_{0, t}^{I^{i,u}} \Big\}    \geq 0, 
 \forall u_t^i \in {\mathbb A}^i),  a.e. t \in [0,T], {\mathbb P}|_{  {\cal G}_{0, t}^{I^{i,u}}   }- a.s., i=1,2,\ldots, N$,
\end{comment} 
  and  the definition of conditional expectation   we have
\begin{align}
&{\mathbb E}^{u^o}\Big\{ I_{A_t^i}(\omega)  {\mathbb H}^{rel}(t,x^o(t),\psi^o(t), q_{11}^o(t),\tilde{q}_{22}^o(t),u_t^{-i,o}, u_t^i-u_t^{i,o}) \Big\}  \nonumber \\
&= {\mathbb E}^{u^o} \Big\{ I_{A_t^i}(\omega) {\mathbb E}^{u^o} \Big\{  {\mathbb H}^{rel}(t,x^o(t),\psi^o(t),q_{11}^o(t),\tilde{q}_{22}^o(t),u_t^{-i,o}, u_t^i-u_t^{i,o}) |{\cal G}_{0, t}^{y^{i}} \Big\}   \Big\} \geq 0, \hso \forall A_t^i \in {\cal G}_{0,t}^{y^{i}},   i \in {\mathbb Z}_N. \label{s5b}
\end{align}
Hence, ${\mathbb H}^{rel}(t,x^o(t),\psi^o(t), q_{11}^o(t),\tilde{q}_{22}^o(t),u_t^{-i,o},u_t^i-u_t^{i,o}) \geq 0, \forall u_t^i \in {\cal M}_1({\mathbb A}^i),  a.e. t \in [0,T], {\mathbb P}^{u^o}- a.s., i=1,2,\ldots, N$. Substituting the this inequality into (\ref{s5a}) gives 
\bes
 J(u^o) \leq J(u), \hst \forall u \in {\mathbb U}^{(N)}[0,T].
\ees
Hence, sufficiency of (\ref{ph1o}) is shown.\\
The last statement is obvious.

\end{proof}

Note that the global convexity conditions {\bf (C6), (C7)} are precisely the conditions often used to show sufficiency of the necessary conditions for centralized information structures.\\

For PbP optimality  we have the following  Corollary.

 \begin{corollary} (PbP Optimality Conditions under ${\mathbb P}^u$)\\
 \label{corollaryfs5.1o}
   Consider Problem~\ref{problemfp2} under Assumptions~\ref{NC1}.

\noi {\bf Necessary Conditions.}       For  an element $ u^o \in {\mathbb U}_{rel}^{(N)}[0,T]$ with the corresponding solution $x^o \in B_{{\mathbb F}_T}^{\infty}([0,T], L^2(\Omega,{\mathbb R}^{n}))$ to be a relaxed PbP optimal strategy, it is necessary  that 
the statements of Theorem~\ref{thmps1o}, {\bf (1), (3)}  hold and statement {\bf (2)} is replaced by
 
   \begin{description}
 \item[(2') ]  The variational inequalities are satisfied:
\begin{align}    
  {\mathbb E}^{u^o} \int_{0}^{T}  {\mathbb H}^{rel}(t,x^o(t), \psi^o(t), q_{11}^o(t),\tilde{q}_{22}^o, u_t^{-i,o},u_t^i-u_t^{i,o}) dt  
  \geq 0, \hso \forall u^i \in {\mathbb U}_{rel}^i[0,T], \: \forall i \in {\mathbb Z}_N. \label{fsa12}
\end{align}

\end{description} 

\noi{\bf Sufficient Conditions.}
Let $(x^o(\cdot), u^o(\cdot))$ denote an admissible state and control pair and let $\psi^o(\cdot)$ the corresponding adjoint processes. 
   Suppose the  conditions the conditions of Theorem~\ref{thmps1o}, {\bf (C6), (C7)} hold.\\
  \noi Then $(x^o(\cdot),u^o(\cdot))$ is PbP  optimal if it satisfies (\ref{ph1o}).

\end{corollary}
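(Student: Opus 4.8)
The plan is to mirror the proof of Theorem~\ref{thmps1o}, exploiting the fact that PbP optimality (Problem~\ref{problemfp2}) involves only unilateral variations of a single team member. For the necessary conditions I would fix an arbitrary $i \in {\mathbb Z}_N$, keep $u_t^{-i}=u_t^{-i,o}$, and perturb only the $i$th strategy through $u_t^{i,\veps}\tri u_t^{i,o}+\veps(u_t^i-u_t^{i,o}) \in {\mathbb U}_{rel}^i[0,T]$, which is admissible by convexity of ${\mathbb U}_{rel}^i[0,T]$. Since $u^o$ is PbP optimal, $\tilde{J}(u^{i,\veps},u^{-i,o})-\tilde{J}(u^{i,o},u^{-i,o})\geq 0$ for all $\veps\in[0,1]$, so the one-sided Gateaux differential is nonnegative. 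Lemma~\ref{lemma4.1p} then supplies the variational process $Z$, but now the sum $\sum_{j} F(t,X^o,u^{-j,o},u^j-u^{j,o})$ collapses to the single term $F(t,X^o,u^{-i,o},u^i-u^{i,o})$ (and likewise for $G$), since $u^j=u^{j,o}$ for $j\neq i$. Because the adjoint equations (\ref{ps8})--(\ref{ps9a}) are determined solely by $u^o$, the Riesz representation argument on ${\cal SM}_0^2([0,T],{\mathbb R}^{n+1})$ produces exactly the same adjoint pair, so statements {\bf (1)} and {\bf (3)} of Theorem~\ref{thmps1o} carry over verbatim.

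The only genuine change occurs in step {\bf (2)}: the single-player Gateaux differential yields
\begin{align}
{\mathbb E}^{u^o}\int_0^T {\mathbb H}^{rel}(t,x^o(t),\psi^o(t),q_{11}^o(t),\tilde{q}_{22}^o(t),u_t^{-i,o},u_t^i-u_t^{i,o})\,dt \geq 0 \nonumber
\end{align}
for every $u^i \in {\mathbb U}_{rel}^i[0,T]$, which is precisely (\ref{fsa12}). The localization argument of Theorem~\ref{thmps1o}{\bf (3)} --- conditioning on ${\cal G}_{0,t}^{y^i}$ and using needle-type perturbations supported on $I_\veps^i\times\Omega_\veps^i$ --- then reproduces the pointwise conditional inequality (\ref{ph1o}) unchanged. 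I would carry out these steps in the same order as before (variational equation, Riesz representation, adjoint identification, localization), the computations being mechanical once the single-direction variation is substituted.

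For the sufficient conditions the cleanest route is to invoke Theorem~\ref{thmps1o} directly: under the global convexity hypotheses {\bf (C6)}, {\bf (C7)}, that theorem shows that any $u^o$ satisfying (\ref{ph1o}) is team optimal for Problem~\ref{problemfp1}, and since team optimality (\ref{fi11}) trivially implies PbP optimality (\ref{fi12}), the pair $(x^o,u^o)$ is a fortiori PbP optimal. Alternatively one may repeat the sufficiency computation of Theorem~\ref{thmps1o} with $u=(u^i,u^{-i,o})$: convexity of $\varphi$ and of ${\mathbb H}^{rel}$ in $x$, together with linearity of ${\mathbb H}^{rel}$ in $\nu^i$, give
\begin{align}
J(u^o)-J(u^i,u^{-i,o}) \leq -{\mathbb E}^{u}\int_0^T {\mathbb H}^{rel}(t,x^o(t),\psi^o(t),q_{11}^o(t),\tilde{q}_{22}^o(t),u_t^{-i,o},u_t^i-u_t^{i,o})\,dt , \nonumber
\end{align}
and (\ref{ph1o}) forces the right-hand side to be nonpositive via the conditional-expectation identity (\ref{s5b}).

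I do not anticipate a serious obstacle, since the whole argument is parametrized by the already-established Theorem~\ref{thmps1o}. The only point requiring care is confirming that the single-player variation in Lemma~\ref{lemma4.1p} produces exactly the same adjoint process $\{\psi^o,q_{11}^o,q_{12}^o,\tilde{q}_{21}^o,\tilde{q}_{22}^o\}$ as the full team variation, so that the team-form pointwise condition (\ref{ph1o}) is genuinely the inequality that emerges, and that no additional coupling terms survive when only $u^i$ is perturbed.
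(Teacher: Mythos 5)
Your proposal is correct and follows essentially the same route as the paper, which proves the corollary by repeating the argument of Theorem~\ref{thmps1o} with the variation restricted to the single direction $u^i-u^{i,o}$ while all other components are held at their optimal values, and obtains sufficiency exactly as in that theorem. Your added observation that the adjoint pair is unchanged (being determined by $u^o$ alone) and that team optimality under {\bf (C6)}, {\bf (C7)} trivially yields PbP optimality is consistent with, and merely fills in, what the paper leaves implicit.
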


 \begin{proof} {\bf Necessary Conditions.} The methodology is similar to that of Theorem~\ref{thmps1o}, with the exception that we only vary in the direction $u^i-u^{i,o}$ while all other strategies assume their optimal values. \\
 {\bf Sufficient Conditions.} This  is precisely as done in Theorem~\ref{thmps1o}.
\end{proof}

\ \

Therefore, under the global convexity conditions {\bf (C6), (C7)},  we deduce  that  PbP optimality implies team optimality. 

 In specific examples one may invoke the maximum principle to derive the optimal decentralized strategies under the original probability space ${\mathbb P}^u$ given by (\ref{ps7})-(\ref{ps12}), or under the reference probability space ${\mathbb P}$ given by (\ref{ps6e})-(\ref{ps6ee}).\\

\section{Team Optimality Conditions for Regular Strategies }
\label{regular}
In this section, we illustrate  how the optimality conditions derived based on relaxed strategies can be reduced to optimality conditions based  regular strategies. 

Suppose optimal regular  team and PbP  strategies  exist from the admissible class ${\mathbb U}_{reg}^{(N)}[0,T] \subset {\mathbb U}_{rel}^{(N)}[0,T]$. Then the necessary and sufficient conditions presented in the previous section  can be specialized to the class of decision strategies which are simply Dirac measures concentrated $\{u_t^o : t \in [0,T]\} \in {\mathbb U}_{reg}^{(N)}[0,T]$.  Specifically, consider  the class of regular decisions  ${\mathbb U}_{reg}^{(N)}[0,T]$, where  the sets ${\mathbb A}^i$ are   compact subsets of  ${\mathbb R}^{d_i}, i=1,2, \ldots, N.$ This class  of regular decisions  embeds continuously  into the class of relaxed decisions through the map $u \in {\mathbb U}_{reg}^{(N)}[0,T] \longrightarrow \delta_{u_t(\omega)}\in {\mathbb U}_{rel}^{(N)}[0,T].$   Clearly, for every $g \in L_{{\cal G}_T}^1([0,T] \times \Omega, C({\mathbb A}^i))$ we have
\begin{eqnarray}
  {\mathbb E} \int_{ [0,T]  \times {\mathbb A}^i}g(t,\omega,\xi) \delta_{u_t^i(\omega)}(d\xi) dt  = {\mathbb E}\int_{ [0,T]  } g (t,\omega,u_t^i(\omega))dt, \hst \forall i \in {\mathbb Z}_N. \label{eq49}
   \end{eqnarray}
  The important advantage of the theory of relaxed strategies is that the necessary conditions of optimality  for regular strategies follow readily from those of relaxed strategies, without having to repeat the derivations.

\noi Thus,   by simply replacing the relaxed strategies by  Dirac measures, from previous section we obtain the following Hamiltonian (corresponding to regular strategies)
\bes
 {\mathbb   H}^{reg}: [0, T] \times {\mathbb R}^n\times {\mathbb R}^n\times {\cal L}({\mathbb R}^k,{\mathbb R})\times  {\cal L}({\mathbb R}^m,{\mathbb R}^n)\times    {\mathbb A}^{(N)} \longrightarrow {\mathbb R},
\ees
   where
   \begin{align}
    {\mathbb H}^{reg} (t,x,\psi, q_{11}, \tilde{q}_{22},  u) \tri \la f(t,x,u),\psi\ra +\ell(t,x,u) + tr \Big(\tilde{q}_{22}^*\sigma(t,x,u)\Big)
     +tr \Big( q_{11}^*  h^*(t,x,u)\Big),  \label{dh1}
    \end{align}
and the Hamiltonian system of equations is given by (\ref{ps8})-(\ref{ps9a}),  with relaxed strategies replaced by regular strategies $u\in {\mathbb U}_{reg}^{(N)}[0,T]$.

\begin{theorem}(Regular team optimality conditions)
\label{theorem7.1}
 Consider Problem~\ref{problemfp1} under the Assumptions of Theorem~\ref{thmps1o} with admissible decisions  from the regular class taking  values in   ${\mathbb A}^i,$  a closed, bounded and convex subset of ${\mathbb R}^{d_i}$, $\forall  i\in {\mathbb Z}_N$.

\noi {\bf Necessary Conditions.}    For  an element $ u^o \in {\mathbb U}_{reg}^{(N)}[0,T]$ with the corresponding solution $x^o \in B_{{\mathbb F}_T}^{\infty}([0,T], L^2(\Omega,{\mathbb R}^n))$ to be team optimal, it is necessary  that
the following hold.

\begin{description}

\item[(1)]  Statement {(1)} of Theorem~\ref{thmps1o} holds.

 \item[(2) ]  The variational inequality is satisfied:

\begin{align}     \sum_{i=1}^N &{\mathbb  E}^{u^o}  \int_0^T    {\mathbb H}^{reg} (t,x^o(t),\psi^o(t), q_{11}^{o}(t), \tilde{q}_{22}^o(t), u_t^{-i,o}, u_t^{i}) dt\nonumber \\
&\geq  \sum_{i=1}^N {\mathbb  E}^{u^o}  \int_0^T  {\mathbb  H}^{reg} (t,x^o(t),\psi^o(t), q_{11}^{o}(t), \tilde{q}_{22}^o(t), u_t^{o}) \Big) dt,    \hst
 \forall u \in {\mathbb U}_{reg}^{(N)}[0,T].  \label{eqd16}    \end{align}

\item[(3)]  The process $\{(\Psi_1^o, {\psi}^o),(q_{11}^o, q_{12}^o, \tilde{q}_{21}^o, \tilde{q}_{22}^o)\} \in  L_{{\mathbb F}_T}^2([0,T],{\mathbb R}^{n+1})\times L_{{\mathbb F}_T}^2([0,T],{\cal L}({\mathbb R}^{m+k},{\mathbb R}^{n+1}))$ is a unique solution of the backward stochastic differential equation (\ref{ps8}), (\ref{ps9}), with  ${\mathbb H}^{rel}$ replaced by ${\mathbb H}^{reg}$ such that $u^o \in {\mathbb U}_{reg}^{(N)}[0,T]$ satisfies  the  point wise almost sure inequalities with respect to the $\sigma$-algebras ${\cal G}_{0,t}^{y^i} $, $ t\in [0, T]:$

\begin{align}
  {\mathbb E}^{u^o} \Big\{ \Big( {\mathbb H}^{reg}(t,x^o(t),& \psi^o(t),q_{11}^o(t),\tilde{q}_{22}^o(t),u_t^{-i,o}, u_t^{i})- {\mathbb H}^{reg}(t,x^o(t),  \psi^o(t) ,q_{11}^o(t),\tilde{q}_{22}^o(t),u_t^{o}    ) \Big) |{\cal G}_{0, t}^{y^i} \Big\}   \geq  0, \nonumber  \\
&\forall u^i \in {\mathbb A}^i,  a.e. t \in [0,T], {\mathbb P}|_{{\cal G}_{0,t}^{y^i}}^{u^o}- a.s., i=1,2,\ldots, N.   \label{eqhd35}   \end{align}

\end{description}

 \noi {\bf Sufficient Conditions.}    Let $(u^o(\cdot), x^o(\cdot))$ denote an admissible decision and state  pair and  $\psi^o(\cdot)$ the corresponding adjoint processes. \\
   Suppose the conditions {\bf (C7)} holds and in addition

\begin{description}

\item[(C8)] $f, \sigma, h, \ell$ are continuously differentiable in $u \in {\mathbb A}^{(N)}$ and uniformly bounded;

\item[(C9)]  ${\mathbb H}^{reg}(t,\cdot,\psi,q_{11}, \tilde{q}_{22}, \cdot),   t \in  [0, T]$,  is convex in $(x,u) \in {\mathbb R}^n \times {\mathbb A}^{(N)}$;

\end{description}

\noi Then $(x^o(\cdot),u^o(\cdot))$ is optimal if it satisfies (\ref{eqhd35}).

\end{theorem}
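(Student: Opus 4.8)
I would treat the necessary part as a specialization of Theorem~\ref{thmps1o} and reserve the real work for sufficiency. The key structural fact is that both $\mathcal H^{rel}$ and ${\mathbb H}^{rel}$ are \emph{affine} in the relaxed control argument, so under the embedding $u\in{\mathbb U}_{reg}^{(N)}[0,T]\mapsto \delta_{u_t}\in{\mathbb U}_{rel}^{(N)}[0,T]$ they collapse to ${\mathbb H}^{reg}$, and by (\ref{eq49}) any test direction $\delta_{u^i}-\delta_{u^{i,o}}$ produces the plain difference ${\mathbb H}^{reg}(\cdots,u^i)-{\mathbb H}^{reg}(\cdots,u^{i,o})$. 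Thus item~(1) is literally statement~(1) of Theorem~\ref{thmps1o}; item~(2), namely (\ref{eqd16}), is the relaxed variational inequality (\ref{ps4o}) evaluated at Dirac directions, where affinity turns each relaxed integrand into the regular Hamiltonian difference; and item~(3), namely (\ref{eqhd35}), is obtained by taking $\nu^i=\delta_{u^i}$ in the pointwise relaxed minimum (\ref{ph1o}), which is legitimate since Dirac measures supported in ${\mathbb A}^i$ lie in ${\cal M}_1({\mathbb A}^i)$. Because ${\mathbb A}^i$ is now convex, the admissible regular variations $u_t^{i,\varepsilon}=u_t^{i,o}+\varepsilon(u_t^i-u_t^{i,o})$ remain in ${\mathbb U}_{reg}^i[0,T]$, so the Gateaux-differential construction of Lemma~\ref{lemma4.1p} and Theorem~\ref{thmps1o} applies verbatim at Dirac measures and no new estimates are needed.

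For sufficiency I would reuse the computation of Theorem~\ref{thmps1o}. For an arbitrary regular $u$, write $J(u^o)-J(u)$ as in (\ref{s5}), bound $\varphi$ by the convexity assumption \textbf{(C7)}, apply the It\^o rule to $\langle\psi^o,x-x^o\rangle$ together with the adjoint equation (\ref{ps9}) (with ${\mathbb H}^{rel}$ replaced by ${\mathbb H}^{reg}$), and collect terms to reach
\begin{equation*}
\begin{aligned}
J(u^o)-J(u)\le {}& {\mathbb E}^u\Big\{\int_0^T\big[{\mathbb H}^{reg}(t,x^o(t),\psi^o(t),q_{11}^o(t),\tilde{q}_{22}^o(t),u_t^o)-{\mathbb H}^{reg}(t,x(t),\psi^o(t),q_{11}^o(t),\tilde{q}_{22}^o(t),u_t)\big]\,dt\Big\}\\
&-{\mathbb E}^u\Big\{\int_0^T\langle {\mathbb H}_x^{reg}(t,x^o(t),\psi^o(t),q_{11}^o(t),\tilde{q}_{22}^o(t),u_t^o),\,x^o(t)-x(t)\rangle\,dt\Big\}.
\end{aligned}
\end{equation*}
Now I would invoke joint convexity \textbf{(C9)} and differentiability \textbf{(C8)} of ${\mathbb H}^{reg}$ in $(x,u)$ to bound the first bracket by the subgradient terms $\langle {\mathbb H}_x^{reg},x^o-x\rangle+\sum_{i=1}^N\langle {\mathbb H}_{u^i}^{reg},u_t^{i,o}-u_t^i\rangle$ evaluated at $(x^o,u^o)$; the $x$-gradient contributions cancel against the last line, leaving
\begin{equation*}
J(u^o)-J(u)\le -\,{\mathbb E}^u\Big\{\int_0^T\sum_{i=1}^N\langle {\mathbb H}_{u^i}^{reg}(t,x^o(t),\psi^o(t),q_{11}^o(t),\tilde{q}_{22}^o(t),u_t^o),\,u_t^i-u_t^{i,o}\rangle\,dt\Big\}.
\end{equation*}

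It then remains to show the integrand sum is nonnegative. This is where (\ref{eqhd35}) must be converted from a \emph{global} conditional minimum to a \emph{first-order} conditional inequality. Since ${\mathbb A}^i$ is convex and ${\mathbb H}^{reg}$ is convex and $C^1$ in $u^i$, I would insert $u^i=u_t^{i,o}+\lambda(v-u_t^{i,o})$ with $v\in{\mathbb A}^i$ into (\ref{eqhd35}), divide by $\lambda$, and let $\lambda\downarrow 0$; the difference quotient is dominated by an integrable bound thanks to the uniform boundedness of ${\mathbb H}_{u^i}^{reg}$ in \textbf{(C8)}, so dominated convergence gives
\[
{\mathbb E}^{u^o}\Big\{\langle {\mathbb H}_{u^i}^{reg}(t,x^o(t),\psi^o(t),q_{11}^o(t),\tilde{q}_{22}^o(t),u_t^o),\,v-u_t^{i,o}\rangle\,\big|\,{\cal G}_{0,t}^{y^i}\Big\}\ge 0,\quad \forall v\in{\mathbb A}^i.
\]
Taking $v=u_t^i$ and running the conditioning/localization argument of (\ref{s5b}) — testing against $I_{A_t^i}$ for $A_t^i\in{\cal G}_{0,t}^{y^i}$ and using the ${\cal G}_{0,t}^{y^i}$-measurability of $u_t^i$ — yields $\langle {\mathbb H}_{u^i}^{reg},u_t^i-u_t^{i,o}\rangle\ge 0$, $\,{\mathbb P}^{u^o}$-a.s., hence ${\mathbb P}^u$-a.s. since ${\mathbb P}^{u^o}$ and ${\mathbb P}^u$ are equivalent. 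Substituting into the bound above gives $J(u^o)\le J(u)$ for all regular $u$, proving optimality.

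The main obstacle I anticipate is precisely this bridge between the two forms of the optimality condition: (\ref{eqhd35}) is a global conditional minimum over ${\mathbb A}^i$, whereas the joint-convexity step of the sufficiency argument naturally produces first-order gradient terms, so I must lean on convexity of ${\mathbb H}^{reg}$ in $u$, convexity of ${\mathbb A}^i$, and the uniform bounds of \textbf{(C8)} to pass from the global to the first-order variational inequality, and then on the equivalence of ${\mathbb P}^{u^o}$ and ${\mathbb P}^u$ to convert the conditional almost-sure statement into the unconditional integral bound under ${\mathbb E}^u$. The corresponding PbP statements follow by carrying out the same variations only in the $i$-th component while freezing the remaining strategies at their optimal values, exactly as in Corollary~\ref{corollaryfs5.1o}.
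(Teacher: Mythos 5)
Your proposal is correct and follows essentially the same route as the paper: the paper's own proof of this theorem is the two-line remark that the necessary conditions follow from Theorem~\ref{thmps1o} by restricting relaxed strategies to Dirac measures, and that sufficiency is obtained ``precisely as in Theorem~\ref{thmps1o}'' with the additional hypothesis \textbf{(C8)}. Your expansion --- the collapse of the relaxed Hamiltonian to ${\mathbb H}^{reg}$ under the embedding $u\mapsto\delta_{u}$, the joint-convexity inequality from \textbf{(C9)} replacing the convex-in-$x$/linear-in-$\nu$ inequality of the relaxed case, and the passage from the global conditional minimum (\ref{eqhd35}) to the first-order conditional variational inequality via convexity of ${\mathbb A}^i$ and the localization argument of (\ref{s5b}) --- is exactly the intended argument, spelled out in more detail than the paper itself provides.
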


\begin{proof} The necessary conditions follow from Theorem~\ref{thmps1o}
 by simply replacing relaxed strategies by Dirac measures concentrated at $u^o  \in {\mathbb U}_{reg}^{(N)}[0,T]$. The derivation of sufficient conditions is done precisely as in  Theorem~\ref{thmps1o}, by using the additional condition {\bf (C8)}.
%
%%%% Start Remove
%\begin{comment}
%
% {\bf (I)}.  This part  follows from straightforward application of the embedding mentioned above and the definition  (\ref{eq49}). Considering  the necessary conditions of optimality given in Theorem ~\ref{theorem5.1}, and using the  embedding mentioned above it is easy to derive the statements in the necessary conditions of optimality. The sufficiency part follows from the above embedding and the additional convexity conditions as in the case of relaxed decisions. \\
%{\bf (II).}  By Theorem~\ref{thm-nf1r},  $\inf_{ u \in   {\mathbb U}_{reg}^{(N),z^u}[0,T] } J(u) = \inf_{ u \in   \overline{\mathbb U}_{reg}^{(N)}[0,T] } J(u)$.  Since, this holds then the necessary conditions can be derived using strategies from  $\overline{\mathbb U}_{reg}^{(N)}[0,T]$. The sufficiency part is done as in {\bf (I)}.  \\
%{\bf (III)} This is similar to Theorem~\ref{theorem5.1}, Part {\bf (III)}.
%
%\end{comment}

%%% End Remove

\end{proof}

 Person-by-person optimality conditions for regular decision strategies follow from their  relaxed  counterparts, as discussed above.

 \subsection{Realization   of Relaxed by Regular Strategies}

 The existence of optimal relaxed strategies shown in  Theorem~\ref{theorem3.2} does not assume convexity of the actions spaces ${\mathbb A}^i, i=1, \ldots, N$. Since is some applications of stochastic dynamic team theory,  it is often desirable and easier to construct regular team strategies, next we address the question of  whether  a regular team strategy with  corresponding team pay-off   is close to that  realized by an optimal  relaxed team strategy. To this end, we have  the following result.

\begin{theorem}
\label{realization}
 Consider the regular team strategies ${\mathbb U}_{reg}^{(N)}[0,T]$, where  ${\mathbb A}^{(N)}$ is  closed and bounded, 
but not necessarily convex.  Suppose the  assumptions of Lemma~\ref{lemma3.1}  and Theorem~\ref{theorem3.2} hold, and   consider the team problem  stated in  Problem~\ref{problemfp1}. \\
 Let  $u^o \in {\mathbb U}_{rel}^{(N)}$ be  the optimal relaxed team strategy.  Then,  for every  $\varepsilon >0$ there exists a regular team strategy control $u_r \in {\mathbb  U}_{reg}^{(N)}[0,T]$ such that $$ J(u_r) \leq \varepsilon + J(u^o).$$
\end{theorem}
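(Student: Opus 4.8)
The plan is to prove the density of regular strategies in relaxed strategies with respect to the relevant topology, and then invoke the lower semicontinuity (and in fact continuity along approximating sequences) of the pay-off functional $J$ established in Theorem~\ref{theorem3.2}. Concretely, I would first recall that for each $i \in {\mathbb Z}_N$, the set of relaxed strategies ${\mathbb U}_{rel}^i[0,T] = L_{{\cal G}_T^{y^i}}^{\infty}([0,T],{\cal M}_1({\mathbb A}^i))$ is, by the construction in Definition~\ref{srategiesr}, the weak$^*$-closed convex hull of the Dirac-measure-valued (regular) strategies $\{\delta_{u_t^i(\omega)}: u^i \in {\mathbb U}_{reg}^i[0,T]\}$. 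The extreme points of ${\cal M}_1({\mathbb A}^i)$ are precisely the Dirac measures $\{\delta_a : a \in {\mathbb A}^i\}$, and this characterization does \emph{not} require convexity of ${\mathbb A}^i$. This is exactly where the Krein-Millman theorem enters: the compact convex set ${\mathbb U}_{rel}^i[0,T]$ (compact in the vague topology, as noted after Definition~\ref{srategiesr}) is the closed convex hull of its extreme points, and these extreme points are realized by regular (Dirac) strategies.

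Second, I would make this density statement quantitative along a sequence. Given the optimal relaxed strategy $u^o = (u^{1,o},\ldots,u^{N,o}) \in {\mathbb U}_{rel}^{(N)}[0,T]$, by the Krein-Millman representation together with the metrizability of the vague topology on the compact set ${\mathbb U}_{rel}^i[0,T]$ (which holds because $L_{{\cal G}_T^{y^i}}^1([0,T],C({\mathbb A}^i))$ is separable), I can extract for each $i$ a sequence of regular strategies $u^{i,\alpha} \in {\mathbb U}_{reg}^i[0,T]$ with $u^{i,\alpha} \buildrel v\over\longrightarrow u^{i,o}$ as $\alpha \to \infty$. Here I would note that the embedding $u^i \mapsto \delta_{u_t^i(\omega)}$ from regular into relaxed strategies, recorded in the displayed identity preceding Definition~\ref{srategiesp}, lets me treat each $u^{i,\alpha}$ as an element of ${\mathbb U}_{rel}^i[0,T]$ for the purpose of taking vague limits.

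Third, I would apply Lemma~\ref{lemma3.1}, part {\bf (2)}, which gives continuous dependence of the augmented state: $u^{i,\alpha} \buildrel v\over\longrightarrow u^{i,o}$ for all $i$ implies $X^\alpha \buildrel s\over\longrightarrow X^o$ in $B_{{\mathbb F}_T}^{\infty}([0,T],L^2(\Omega,{\mathbb R}^{n+1}))$. Then I would re-run exactly the estimates from the proof of Theorem~\ref{theorem3.2}: the argument there decomposing the integral cost via (\ref{eq7}), (\ref{eq7n}) and using Assumptions~\ref{assumptionscost}, {\bf (B1)-(B3)} together with the uniform essential-sup bound on $\Lambda^o$ shows not merely lower semicontinuity but that $J(u^\alpha) \to J(u^o)$ along this particular regular approximating sequence (since strong convergence of $X^\alpha$ controls both the terminal term $\Lambda^\alpha(T)\varphi(x^\alpha(T))$ and the running term). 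Consequently, for every $\varepsilon>0$ there is $\alpha$ large enough that $J(u^\alpha) \le \varepsilon + J(u^o)$, and setting $u_r \tri u^\alpha \in {\mathbb U}_{reg}^{(N)}[0,T]$ finishes the proof.

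The main obstacle I anticipate is the last step, namely upgrading the one-sided lower semicontinuity used in Theorem~\ref{theorem3.2} to genuine convergence $J(u^\alpha)\to J(u^o)$ along the Dirac approximants. Lower semicontinuity alone would only yield $J(u^o)\le \liminf_\alpha J(u^\alpha)$, which is the wrong direction for producing a regular strategy \emph{near} the optimum. However, the estimates in Theorem~\ref{theorem3.2} are in fact two-sided along any vaguely convergent sequence with strongly convergent states: the continuity of $\ell(t,\cdot,\xi)$ in $x$ uniform in $\xi$ (Assumption {\bf (B1)}), the growth bound {\bf (B2)}, the uniform bound on $\Lambda^\alpha$ from Lemma~\ref{lemma3.1}{\bf (1)}, and Lebesgue dominated convergence give $\limsup_\alpha J(u^\alpha) \le J(u^o)$ as well; the only genuinely delicate point is the vague-convergence handling of the term $\int \ell(t,x^o,u^\alpha)\,dt$ as $u^\alpha$ ranges over Dirac measures, which is exactly the computation sketched around (\ref{eq7n}) and requires that $\ell$ be representable against the product relaxed controls, i.e. lies in $L_{{\cal G}_T^y}^1([0,T]\times\Omega,C({\mathbb A}^{(N)}))$. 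I would make sure that Assumptions~\ref{assumptionscost} supply enough regularity (continuity of $\ell$ in the action variable, which follows from the relaxed formulation) so that this integrand is a legitimate test function for the vague topology, thereby closing the gap.
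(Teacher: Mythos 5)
Your proposal follows essentially the same route as the paper's proof: embed regular strategies as Dirac measures, invoke the Krein--Milman theorem on the vaguely compact convex set ${\mathbb U}_{rel}^{(N)}[0,T]$ to produce approximants built from regular strategies, pass these through the continuous-dependence result of Lemma~\ref{lemma3.1}\,\textbf{(2)}, and then upgrade the lower-semicontinuity estimates of Theorem~\ref{theorem3.2} to genuine convergence $J(u^\alpha)\to J(u^o)$ via the uniform bound on $\Lambda$ and dominated convergence. One caveat, shared with the paper's own argument: Krein--Milman only yields vague limits of \emph{convex combinations} $\sum_i \alpha_i^n \delta_{u_i}$ of Dirac strategies, not of Dirac strategies themselves, so your claim that you can directly extract a sequence of \emph{regular} strategies converging vaguely to $u^o$ presupposes the chattering-type density of ${\mathbb U}_{reg}^{(N)}[0,T]$ in ${\mathbb U}_{rel}^{(N)}[0,T]$ (true, but requiring the standard time-partition construction), exactly as the paper does when it identifies its convex combinations $u^n$ with elements of ${\mathbb U}_{reg}^{(N)}[0,T]$.
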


\begin{proof}
 Since ${\mathbb  U}_{rel}^{(N)}[0,T] \equiv L_{ {\cal G}_T^{y^i}}^{\infty}([0,T]\times \Omega, {\cal M}_1({\mathbb A}^{(N)})) \subset  L_{ {\cal G}_T^{y^i}}^{\infty}([0,T]\times \Omega, {\cal M}({\mathbb A}^{(N)})) $ is  compact in the vague topology (that is weak star topology) and convex (by the convexity of ${\cal M}_1(U)$), it follows from the  well  known Krein-Millman theorem that 
 $$ {\mathbb  U}_{rel}^{(N)}[0,T] = cl^{v}conv (\hbox{ext}(  {\mathbb  U}_{rel}^{(N)}[0,T]    )),$$
 i.e.  ${\mathbb  U}_{rel}^{(N)}[0,T]$  is the weak star closed convex hull of its extreme points. By considering the embedding ${\mathbb  U}_{reg}^{(N)}[0,T] \hookrightarrow  {\mathbb  U}_{rel}^{(N)}[0,T]$,  it can be  verified that the extreme points of   ${\mathbb  U}_{rel}^{(N)}[0,T]$   are precisely  the set of regular strategies ${\mathbb  U}_{reg}^{(N)}[0,T]$  through the map $u \in  {\mathbb  U}_{reg}^{(N)}[0,T]  \longrightarrow \delta_{u} \in  {\mathbb  U}_{rel}^{(N)}[0,T].$ Thus, if $u^o \in {\mathbb  U}_{rel}^{(N)}[0,T]$ is the optimal (relaxed) strategy there exists a sequence $\{u^n\}$ of the form  
\bes
u^n \equiv \sum_{i=1}^n \alpha_i^n u_i, \hst  u_i \in {\mathbb  U}_{reg}^{(N)}[0,T], \hst \alpha_i^n \geq 0, \hso \sum_{i=1}^n \alpha_i^n =1, \hso n\in N
\ees
  such that $u^n \buildrel v \over\longrightarrow u^o.$ Let $\{X^n, X^o\} \subset B_{{\mathbb F}_T}^{\infty}([0,T],L_2(\Omega,{\mathbb R}^{n+1}))$ denote the solutions of the augmented system (\ref{pi14}) corresponding to $\{u^n,u^o\}$ respectively.  By Lemma~\ref{lemma3.1},   along a subsequence if necessary, it follows that $X^n \buildrel s \over\longrightarrow X^o$ in $B_{{\mathbb F}_T}^{\infty}([0,T],L_2(\Omega,{\mathbb R}^{n+1})).$ Consequently,  it follows from continuity of $L$ and $\Phi$  in the augmented state variable $X$, Assumptions~\ref{assumptionscost},  {\bf (B1)-(B3)}, and Lebesgue dominated convergence theorem that $\lim_{n\rightarrow \infty} J(u^n) = J(u^o).$ Note that for every $n \in N$, $u^n \in {\mathbb  U}_{reg}^{(N)}[0,T]$, and so, for every $\varepsilon >0$, there exists an $ n_{\varepsilon}\in N$ such that $ |J(u^n)- J(u^o)| < \varepsilon $ for all $n \geq n_{\varepsilon}.$ Taking $u_r = u^{n_{\varepsilon}}$ we have $ J(u_r) \leq \varepsilon + J(u^o).$  This completes the derivation.

\end{proof}

\begin{remark} 
\label{rem-real}
By the previous theorem  an $\epsilon$-optimal team strategy can be found from the class of regular strategies (measurable functions with values in ${\mathbb A}^{(N)}$), though the limit of such strategies may be  a relaxed.  More specifically,  if ${\mathbb A}^i \subset {\mathbb R}^{d_i}$ consists of a finite set of points, it is clearly non-convex,  and optimal team strategies may not exist from the class of regular strategies ${\mathbb  U}_{reg}^{(N)}[0,T]      $ based on the set ${\mathbb A}^i$. However, optimal relaxed team strategies do exist. In this case the sequence of regular strategiesa pproximating the optimal relaxed strategies may oscillate violently between the finite set of points of ${\mathbb A}^i$ with increasing frequency (converging to infinity). This  is known as  chattering.
\end{remark}

\begin{remark}(General Information Structures)
\label{gen-is}
The optimality conditions apply to many other forms information structures. We describe two such generalization.\\
{\bf Nested Information Structures.} Suppose each team members  actions $u_t^i$ at time $t\in [0,T]$  is a nonanticipative measurable function of   the noisy observations  $\{y^i(s): 0 \leq s \leq  t \}$, and delayed noisy observations $\{y^j(s-\eps_j):  \eps_j >0, j \in {\cal O}(j), 0 \leq s \leq t\}$,   of any subset ${\cal O}(i) \subset \{1,2,\ldots,i-1, i+1,\ldots,N\}$ of the rest of observations which are communicated  to member $i$, for $i=1, \ldots, N$. Let ${\cal G}_{0,t}^{I^i} \tri \sigma \big\{I^i(s): 0\leq s \leq t\Big\}$ denote the minimum $\sigma-$algebra generated by $\Big\{ I^i(s) \tri \{y^i(s), y^j(s-\eps_j): \eps_j >0,  j \in {\cal O}(i)\}:    0\leq s \leq t\Big\}, t \in [0,T]$, the  information available to $u_t^i$, at $t \in [0,T]$, for $i=1, \ldots, N$. Clearly, ${\cal G}_{0,t}^{I^i}$ is a nested information structure since ${\cal G}_{0,t}^{I^i} \subseteq {\cal G}_{0,\tau}^{I^i}, \forall \tau > t$.\\
{\bf Nonnested Information Structures.} Suppose each team members  actions $u_t^i$ at time $t\in [0,T]$  is a  measurable function of  $ I^i(t) \tri \{y^i(t), y^j(t-\eps_j): \eps_j >0,  j \in {\cal O}(i)\}.$ Let  ${\cal G}^{I^i(t)} \tri \sigma \big\{I^i(t)\}$ denote the minimum $\sigma-$algebra generated by $ I^i(t), t \in [0,T]$, the  information available to $u_t^i$, at $t \in [0,T]$, for $i=1, \ldots, N$.\\
Clearly,  ${\cal G}^{I^i(t)}$ is a nonnested (nonclassical) information structure since  ${\cal G}^{I^i(t)} \nsubseteq  {\cal G}^{I^i(\tau)},  \forall \tau>t $. \\  
For such information structures the conditioning of the Hamiltonian  is replaced by the conditioning with respect to ${\cal G}_{0,t}^{I^i}$ or ${\cal G}^{I^i(t)}, t \in [0,T]$. 
\end{remark}

This completes our analysis on team and PbP optimality conditions for decision systems with  decentralized noisy information structures.  We point out that the challenge is  in the  implementation of  the new variational Hamiltonians and the computation the optimal decentralized strategies for specific examples.

In future work we will we investigate applications of the results of this part to specific linear and nonlinear
distributed stochastic differential decision systems from  Communication and Control  areas as in \cite{charalambous-ahmedPIS_2012}.

\section{Conclusions and Future Work}
\label{cf}
In this paper we presented two methods which generalize static team theory  to dynamic team theory, in the context of continuous-time stochastic differential decentralized decision problems, with relaxed and regular decentralized team strategies. Both methods utilize Girsanov's measure tranformation to transform the original problem to an equivalent problem  under a reference probabiity space, in which the observations and/or the unobserved state are not affected by any of the team decisions. The first  generalizes and makes precise Witsenhausen's \cite{witsenhausen1988} notion of equivalence between static and dynamic team problems. The second method is based on Pontryagin's  minimum principle and  consists of forward and backward stochastic differential equations, and a conditional Hamiltonian with respect to the information structure available to each team player. We also show existence of team and PbP optimality among the class of relaxed decentralized strategies. \\

\noi The methodology is very general; it is  applicable to variety of examples, including nonlinear stochastic differential team problems, and it is easily generalized to other systems and games. Below, we provide a short list of additional generalizations and issues which can be further investigated.

\begin{description}
\item[(F1)] For team problems with regular decentralized strategies with non-convex action spaces ${\mathbb A}^i, i =1,2,\ldots, N$, and diffusion coefficients which depend on the decision variables it is necessary to derive optimality conditions based on second-order variations.  If one considers our function spaces, then the extra equation coming from second order variations, then  the modified conditional Hamiltonian can be easily obtained as in \cite{yong-zhou1999}. 

\item[(F2)] The derivation of optimality conditions can be used in other type of games such as Nash-equilibrium strategies with different information structures for each player, and minimax games.

\item[(F3)] The derivation of optimality conditions can be extended to differential systems  driven by both continuous Brownian motion processes and jump processes, such as L\'evy or Poisson jump processes. If one invokes our spaces then this generalization follows directly from \cite{ahmed-charalambous2012a}.

\item[(F4)] The Pontryagin's optimality conditions obtained for continuous systems can be easily transformed to analogous optimality conditions for discrete-time systems, by invoking the discrete-time Girsanov's measure transformation, the semi martingale and Riesz representation theorems for discrete-time Hilbert processes. This direction is worth pursuing to gain additional insight  into decentralized decision making. 

\end{description}

\bibliographystyle{IEEEtran}
\bibliography{bibdata}

\end{document}